\titleformat{\section}[hang]{\normalfont\scshape\centering}{\thesection.}{1em}{}
\titleformat{\subsection}[runin]{\bf}{\thesubsection.}{1em}{}
\newtheorem{theorem}{Theorem}[section]
\newtheorem{lemma}[theorem]{Lemma}
\newtheorem{rmk}[theorem]{Remark}
\newtheorem{definition}[theorem]{Definition}
\newtheorem{proposition}[theorem]{Proposition}
\numberwithin{equation}{section}
\title[An optimal fractional Hardy inequality on the discrete half-line]{An optimal fractional Hardy inequality\\ on the discrete half-line}
\author{Ujjal Das} 
\address{Ujjal Das 
(\textnormal{{udas@bcamath.org, getujjaldas@gmail.com}})
\newline BCAM -- Basque Center for Applied Mathematics, 48009 Bilbao, Spain}	
\author{Rub\'en de la Fuente-Fern\'andez} 
\address{Rub\'en de la Fuente-Fern\'andez 
(\textnormal{rdelafuente@bcamath.org})
\newline BCAM -- Basque Center for Applied Mathematics, 48009 Bilbao, Spain
\newline Universidad del Pa{\'i}s Vasco / Euskal Herriko Unibertsitatea, 48080 Leioa, Spain}
\thanks{Ujjal Das and Rub\'en de la Fuente-Fern\'andez are partially supported by the Basque Government through the BERC 2022-2025 program and by the Spanish Ministry of Science and Innovation: BCAM Severo Ochoa accreditation CEX2021-001142-S/MICIN/AEI/\allowbreak10.13039/501100011033 and CNS2023-143893 funded by MICIU/AEI/10.13039/501100011033 and by the European Union NextGenerationEU/PRTR. Rub\'en de la Fuente-Fern\'andez is also partially supported by PID2023-146646NB-I00 funded by MICIU/\allowbreak AEI/10.13039/501100011033 and by ESF+.}
\date{\today}
\subjclass[2020]{Primary: 26D15 Secondary: 26A33}
\keywords{Hardy inequality, fractional powers, discrete Laplacian, criticality, subcriticality, optimal weight, ground state representation.}
\begin{document}

\begin{abstract}
\noindent
In the context of Hardy inequalities for the fractional Laplacian $(-\Delta_{\mathbb{N}})^{\sigma}$ on the discrete half-line $\mathbb{N}$, we provide an optimal Hardy-weight $W^{\mathrm{op}}_{\sigma}$ for exponents $\sigma\in\left(0,1\right]$. As a consequence, we provide the sharp constant in the fractional Hardy inequality with the classical Hardy-weight $n^{-2\sigma}$ on $\mathbb{N}$.  It turns out that for $\sigma =1$ the Hardy-weight  $W^{\mathrm{op}}_{1}$ is pointwise larger than the optimal Hardy-weight obtained by Keller--Pinchover--Pogorzelski near infinity. 
As an application of our main result, we obtain unique continuation results at infinity for the solutions of some fractional Schr\"odinger equation. 
\end{abstract}

\maketitle

\section{Introduction}

Although the history of Hardy inequalities begins in 1906, it was not until 1920 that a first version of his celebrated theorem was published in \cite{Har20}. Since then, proving this kind of inequality has been an active field of study. For instance, see \cite{FKP, KPPAMS, DAFR, RS_Bull} for improved Hardy-type inequalities, \cite{GKS_Imp,G_CVPDE,G_JF,KPP_PLMS} for Hardy-type inequalities involving polyharmonic operators,  \cite{CR18,GKS25,KN23} for fractional versions and the references therein. We also refer to \cite{KMP06} for a history of Hardy inequalities and \cite{Bal} for recent developments on it. A general form of the Hardy inequality for the standard discrete Laplacian $-\Delta_{\mathbb{N}}$ (defined in \eqref{Discrete-Laplacian}) can be written as
\begin{equation}\label{og_Hardy}
    \langle -\Delta_{\mathbb{N}}  f,f\rangle_{\ell^2(\mathbb{N})}\geq\langle Wf,f\rangle_{\ell^2(\mathbb{N})} \,,
\end{equation}
where $f\in C_ c(\mathbb{N})$, $f(0)=0$ and $W:\mathbb{N} \rightarrow [0,\infty)$ is a non-trivial weight function. The classical Hardy inequality was given with the weight $W(n)=W^{\operatorname{H}}(n):=\frac{1}{4n^2}$, with $1/4$ being the best constant, which means that one can find a function $f \in C_ c(\mathbb{N})$ such that \eqref{og_Hardy} does not hold with $W(n)=(1+\lambda)\frac{1}{4n^2}$ for $\lambda>0$. Nevertheless, almost after a century Keller--Pinchover--Pogorzelski \cite[Theorem 7.3]{KPP18} found that one can substitute $W^{\operatorname{H}}$ with a larger weight, which behaves asymptotically as $\frac{1}{4n^2}$, and \eqref{og_Hardy} still holds true, see also \cite[Corollary 12]{KLS}. Precisely,  their Hardy-weight $W$ is given by
\begin{align} \label{HW-Op_KPP}
   W(n)= W^{\mathrm{KPP}}(n) := 2- \left[\sqrt{1+\frac{1}{n}} + \sqrt{1-\frac{1}{n}} \right] = \frac{1}{4n^2} + \frac{5}{64n^4} + \ldots \,.
\end{align}
see Section \ref{Sec:Final-Rem} for further discussion. Furthermore, the authors prove that $W^{\mathrm{KPP}}$ is {\it optimal} in the sense that \eqref{og_Hardy} fails for any pointwise larger weight.  The search of optimal weights in Hardy-type inequalities is receiving much attention because of its applications to functional analysis and its importance for the study of Dirichlet forms, spectral theory and probability, in particular properties of Brownian motion \cite{Fra17,KLW21,MP10}.

Regarding the relevance of Hardy inequalities in probability, it is well known that the {\textit{transiency}} of Brownian motion is closely related to the {\textit{subcriticality}} of the Laplace operator $-\Delta_{\mathbb{R}^d}:= -\sum_{i=1}^d \partial_i^2$ in $\mathbb{R}^d$. For instance, the Brownian motion is transient, i.e., the Brownian particle in $\mathbb{R}^d$ eventually escapes from any bounded set in $\mathbb{R}^d$ if and only if $d \geq 3$ (see \cite{MP10}), and in accordance with this, the Laplace operator $-\Delta_{\mathbb{R}^d}$ is subcritical in $\mathbb{R}^d$, i.e., there exists a function $W \gneq 0$ (will be referred to as a Hardy-weight) such that the Hardy-type inequality $-\Delta_{\mathbb{R}^d} \geq W$ holds (in the sense of quadratic forms) in $\mathbb{R}^d$ if and only if $d \geq 3$. In other words, for lower dimensions $d=1,2$, the Brownian motion is {\textit{recurrent}} (i.e., not transient), and $-\Delta_{\mathbb{R}^d}$ is {\textit{critical}} (i.e., not subcritical). This phenomenon is referred to as {\textit{criticality transition}} of the Laplacian in dimension \cite{GKS25}. On the other hand, the Laplace operator on any proper domain $\Omega$ of $\mathbb{R}^d$  (i.e., $\overline{\Omega}\neq \mathbb{R}^d$) is always subcritical \cite[Proposition 4.2]{PT_CVPDE}, \cite[Theorem 2.12]{K}. In particular, $-\Delta_{\mathbb{R}^d_+}$ is subcritical in the half space $\mathbb{R}^d_+:=\mathbb{R}^{d-1} \times (0,\infty)$ for all $d \geq 1$, i.e., no criticality transition occurs in this case. Moreover, in dimension one, it is known that any positive integer power of the Laplacian $(-\Delta_{\mathbb{R}_+})^k$ with $k \in \mathbb{N}$ is subcritical in $\mathbb{R}_+:=(0,\infty)$ {\cite{Birman}}. Also, the fractional powers, i.e., $(-\Delta_{\mathbb{R}_+})^{\sigma}$, $\sigma \in (0,1)$, are subcritical in $\mathbb{R}_+$, and in fact, explicit Hardy-weights were produced by \cite[$\sigma \neq 1/2$]{BD} and \cite[$\sigma = 1/2$]{ARS} (see also \cite[$\sigma = 1/2$]{AJR} for similar results on finite interval). We also refer to \cite{Herbst}, where similar Hardy-weights were obtained in case of the whole line for $(-\Delta_{\mathbb{R}})^{\sigma}$, $\sigma \in (0,1/2)$, which ensures that $(-\Delta_{\mathbb{R}})^{\sigma}$ is subcritical in $\mathbb{R}$ for $\sigma \in (0,1/2)$. However, unlike the half-line, $(-\Delta_{\mathbb{R}})^{\sigma}$ is critical in $\mathbb{R}$ for $\sigma \geq 1/2$ \cite{Herbst}.

 In the discrete case, the Laplace operator $-\Delta_{\mathbb{Z}^d}$ on $\mathbb{Z}^d$ follows the same criticality transition with respect to dimension as in the continuum case, i.e., $-\Delta_{\mathbb{Z}^d}$ is subcritical in $\mathbb{Z}^d$ if and only if $d \geq 3$. Recall that $-\Delta_{\mathbb{Z}^d}$ is defined as a difference operator on $\mathbb{Z}^d$, i.e., $$-\Delta_{\mathbb{Z}^d} f (n):= \sum_{|m-n|=1}(f(n)-f(m)) \,, \  \ \ f: \mathbb{Z}^d \rightarrow \mathbb{R} \,.$$    Also in dimension $d=1$, there is complete resemblance between the continuum and discrete settings in the context of criticality transition of fractional Laplacian. It is known that $(-\Delta_{\mathbb{Z}})^{\sigma}$ is subcritical if and only if $\sigma \in (0,1/2)$ \cite{GKS25,KN23}.   However, a contrasting phenomenon has recently been observed \cite{GKS25} on the discrete half-line $\mathbb{N}$. It is proven that $(-\Delta_{\mathbb{N}})^k$ with $k \in \mathbb{N}$ is subcritical on $\mathbb{N}$ if and only if $k=1$ \cite{GKS25}, while $(-\Delta_{\mathbb{R}_+})^k$ is subcritical for all $k \in \mathbb{N}$ as mentioned before. In fact, in \cite{GKS25} the authors considered any positive powers of the discrete Laplacian $(-\Delta_{\mathbb{N}})^{\sigma}$ on $\mathbb{N}$ with $\sigma >0$, and showed that the operator is subcritical if and only if $\sigma < \frac{3}{2}$. Their arguments heavily rely on the estimates of the Green kernel of the resolvent of $(-\Delta_{\mathbb{N}})^{\sigma}$ and the {\it Birman–Schwinger principle}. Although this approach does not immediately provide a Hardy-weight $W_{\sigma}$ for $(-\Delta_{\mathbb{N}})^{\sigma}$ with the {\it classical} decay as $W_{\sigma}(n) \asymp n^{-2\sigma}$ for large $n \in \mathbb{N}$, a more sophisticated approach with refined estimates of the Green kernel helps the authors to produce the following Hardy-weight 
 \begin{align} \label{Eq:HW}
     W_{\sigma}(n):= \begin{cases}
     \frac{\gamma}{n^{2\sigma}} \ \ 
 \ \ \ \ \ \ \  \mbox{if} \ \ \sigma \in (0,\frac{3}{2}) \setminus \{\frac{1}{2}\} \\
      \frac{\gamma}{n\log (1+n)} \ \ \mbox{if} \ \  \sigma=\frac{1}{2}
 \end{cases} \,,
 \end{align}
 where $\gamma$ is a positive constant which depends on $\sigma$.  Note that the Hardy-weight in \eqref{Eq:HW} has the desired decay except for $\sigma = \frac{1}{2}$.
In addition, the authors \cite{GKS25} do not quantify the optimal value of the constant $\gamma$ in \eqref{Eq:HW}, and in fact they left it as an open question. Moreover, they also propose the question of finding {\it critical Hardy-weight} $W$ for $(-\Delta_{\mathbb{N}})^{\sigma}$, $\sigma \in (0,\frac{3}{2})$.

 In this article, we address the questions raised above. Our main objective is to find {\it optimal Hardy-weights} $W_{\sigma}^{\mathrm{op}}$ for $(-\Delta_{\mathbb{N}})^{\sigma}$. Roughly speaking, an optimal Hardy-weight is a Hardy-weight for $(-\Delta_{\mathbb{N}})^{\sigma}$ on $\mathbb{N}$, which is 
`as large as possible'. Precisely, a Hardy-weight $W_{\sigma}^{\mathrm{op}}$ is called optimal if $(-\Delta_{\mathbb{N}})^{\sigma} - W_{\sigma}^{\mathrm{op}} \geq 0$ is both critical and null-critical in $\mathbb{N}$, see Section \ref{Sec:Criticality} for the definition of criticality and null-criticality.  
 
 The quest for optimal Hardy-weight for Schr\"odinger operators was originally proposed by Agmon \cite[Page 6]{Agmon}, who considered this problem in connection with his theory of
exponential decay of Schr\"odinger eigenfunctions, and answered first by Devyver--Frass--Pinchover \cite{DFP} in the continuum settings for second-order linear elliptic operators. Since then there is a significant interest in finding optimal Hardy-weight in various settings, for instance, see \cite{DP,Versano} for quasi-linear operators in the Euclidean space, \cite{BGG,DFP,FP,Fis_Rose} for Schr\"odinger operators on manifolds, and \cite{Fischer,KPP18} for the discrete graphs. For developments on more general settings, see \cite{CGL,Marcel,Tak}.

The key step in finding optimal Hardy-weights for an operator is an appropriate supersolution construction. To explain the main ideas briefly, let us consider the Laplace operator $-\Delta_{\mathbb{R}^d}$ on $\mathbb{R}^d_*:=\mathbb{R}^d \setminus \{0\}$. Any strictly positive supersolution $u$, i.e., $u>0,-\Delta_{\mathbb{R}^d}u \geq 0$ gives us a Hardy-weight $W_u=(-\Delta_{\mathbb{R}^d}u)/u$ for $-\Delta_{\mathbb{R}^d}$ due to Agmon--Allegretto--Piepenbrink theorem \cite[Theorem 4.3]{GY}. Now, a suitable choice of supersolution $u_0$ makes the corresponding operator $-\Delta_{\mathbb{R}^d} - W_{u_0}$ both critical and null-critical, and consequently, $W_{u_0}$ is an optimal Hardy-weight. For $d \geq 3$, one such choice of $u_0$ is the square root of the {\it positive minimal Green's function} $\mathcal{G}_{-\Delta_{\mathbb{R}^d}}^{1/2}$ of $-\Delta_{\mathbb{R}^d}$ in $\mathbb{R}^d_*$. 

Although the core idea remains the same in the discrete setting, in order to avoid certain technical difficulties in finding optimal Hardy-weights for {\it graph Laplacian} (Definition \ref{Def:graph-laplacian}), we require the underlying graph to be locally finite \cite{KPP18}. Indeed, since the standard graph on $\mathbb{Z}^d$, which is the graph on $X=\mathbb{Z}^d$ with \textit{standard weights} $b_{n,m}=\delta_{|n-m|,1}$ (see \cite[Definition 1.30]{KLW21}), is locally finite, an analogous construction as in the continuum case gives us an optimal Hardy-weights for $-\Delta_{\mathbb{Z}^d}$ when $d \geq 3$ \cite[Theorem 7.2]{KPP18}. These ideas also help the authors in \cite{KPPAMS} (see also \cite{DAFR}) to find an optimal Hardy-weight $W^{\mathrm{KPP}}$ in \eqref{og_Hardy}.

Now, concerning the fractional powers of the discrete Laplacian, it is well known that $(-\Delta_{\mathbb{Z}^d})^{\sigma}$ with $\sigma \in (0,1)$ gives rise to a graph Laplacian on $\mathbb{Z}^d$, see \cite{KN23}. In this view, to find an optimal Hardy-weight for $(-\Delta_{\mathbb{Z}^d})^{\sigma}$, one may anticipate to apply the general optimal Hardy-weight construction for a graph Laplacian given by \cite{KPP18}.  Unfortunately, this does not work as the underlying graph on $\mathbb{Z}^d$ corresponding to the fractional Laplacian $(-\Delta_{\mathbb{Z}^d})^{\sigma}$ is not locally finite, see \cite[(9)]{FRR24}. Nevertheless, for $d=1$, this problem has been tackled recently in \cite{KN23}. In this case, for the choice of a suitable supersolution, the authors took advantage of the following relation obtained in \cite{CR18}
    \begin{align} \label{Eqn:imp}
        (-\Delta_{\mathbb{Z}})^{\sigma} \mathcal{G}_{(-\Delta_{\mathbb{Z}})^{\alpha}}=\mathcal{G}_{(-\Delta_{\mathbb{Z}})^{\alpha-\sigma}} \ \ \ \mbox{on} \ \ \mathbb{Z}
    \end{align}
    for $\sigma<\alpha<\frac{1}{2}$, where $\mathcal{G}_{(-\Delta_{\mathbb{Z}})^{\sigma}}$ is the positive minimal Green's function of $(-\Delta_{\mathbb{Z}})^{\sigma}$. The above relation together with Agmon--Allegretto--Piepenbrink theorem ensures that
    $$\frac{(-\Delta_{\mathbb{Z}})^{\sigma} \mathcal{G}_{(-\Delta_{\mathbb{Z}})^{\alpha}}}{\mathcal{G}_{(-\Delta_{\mathbb{Z}})^{\alpha}}}$$
    are Hardy-weights for $(-\Delta_{\mathbb{Z}})^{\sigma}$ for all $\alpha \in (\sigma,1/2)$. Keller--Nietchsmann \cite{KN23} showed that these Hardy-weights are, in fact, critical if and only if $\alpha \leq \frac{1+2\sigma}{4}$. Moreover, this Hardy-weight is optimal if and only if $\alpha=\frac{1+2\sigma}{4}$. The optimal weight found in \cite{KN23} was first introduced in \cite{CR18}, and was constructed in a similar way, although it was not proven to be optimal.

    In order to construct optimal Hardy-weights for $(-\Delta_{\mathbb{N}})^{\sigma}$ with $\sigma \in (0,3/2)$, we adapt the ideas of \cite{KN23}, heavily relying on the spectral properties of $(-\Delta_{\mathbb{N}})^{\sigma}$ presented in \cite{GKS25}, and use standard tools from the discrete criticality theory developed in \cite{KPP17}. To do so, we first need to identify $(-\Delta_{\mathbb{N}})^{\sigma}$ as a graph Laplacian on $\mathbb{N}$ as the criticality theory in \cite{KPP17} is only valid for Schr\"odinger operators involving the graph Laplacian. Here comes the first obstacle. It turns out that $(-\Delta_{\mathbb{N}})^{\sigma}$ can be represented as a graph Laplacian {if and only if} $\sigma \leq 1$, see Section \ref{Sec:graph_Lap}. Henceforth, in our study of optimal Hardy-weights for $(-\Delta_{\mathbb{N}})^{\sigma}$, we need the restriction $\sigma \in (0,1]$. In this case, we have the following result.
    \begin{theorem}\label{Theorem:Hardy}
    Let $\sigma \in (0,1]$ and $\sigma<\alpha<1+\sigma$. Then
    $$W_{\alpha,\sigma}(n)=4^\sigma\frac{\Gamma(\frac{3}{2}-\alpha+\sigma)\Gamma(\alpha)}{\Gamma(\alpha-\sigma)\Gamma(\frac{3}{2}-\alpha)}\frac{\Gamma(n+\alpha-\sigma-1)\Gamma(n-\alpha+2)}{\Gamma(n-\alpha+\sigma+2)\Gamma(n+\alpha-1)}$$
    are critical Hardy-weights for $(-\Delta_\mathbb{N})^\sigma$ if and only if $\alpha \leq \frac{3+2\sigma}{4}$. In particular, for $\alpha=\frac{3+2\sigma}{4}$, 
    \begin{equation} \label{HW-Op_our}
        W_\sigma^{\mathrm{op}}(n):=W_{\frac{3+2\sigma}{4},\sigma}=4^{\sigma}\frac{\Gamma\left(\frac{3+2\sigma}{4}\right)^2}{\Gamma\left(\frac{3-2\sigma}{4}\right)^2}\frac{\Gamma(n-\frac{1+2\sigma}{4})\Gamma(n+\frac{5-2\sigma}{4})}{\Gamma(n+\frac{-1+2\sigma}{4})\Gamma(n+\frac{5+2\sigma}{4})}
    \end{equation}
 is an optimal Hardy-weight for $(-\Delta_\mathbb{N})^\sigma$.
\end{theorem}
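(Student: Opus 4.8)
The plan is to transplant to the half-line the strategy that \cite{KN23} used for $(-\Delta_{\mathbb Z})^\sigma$: manufacture an explicit positive solution of the candidate operator out of the Green's functions of the higher fractional powers $(-\Delta_{\mathbb N})^\alpha$, deduce the Hardy inequality from the Agmon--Allegretto--Piepenbrink (ground-state) principle, and then settle criticality and null-criticality by passing to the ground-state transform and reading off a recurrence/transience dichotomy for the transformed graph, relying on the spectral description of $(-\Delta_{\mathbb N})^\sigma$ from \cite{GKS25} and the discrete criticality theory of \cite{KPP17}.

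First I would introduce, for $\beta\in(0,\tfrac32)$, the Green's function $u_\beta$ of $(-\Delta_{\mathbb N})^\beta$ with pole at $1$, normalised so that $u_\beta(n)=\int_0^\pi\frac{\sin(n\theta)\,\sin\theta}{(2-2\cos\theta)^{\beta}}\,d\theta$; carrying out the integral gives the closed form $u_\beta(n)=C_\beta\,n\,\frac{\Gamma(n+\beta-1)}{\Gamma(n-\beta+2)}$ with $C_\beta$ proportional to $4^{-\beta}\,\Gamma(\tfrac32-\beta)/\Gamma(\beta)$, which is positive on $\mathbb N$, vanishes at $0$, and — with the appropriate choice of sign — still defines a sign-definite function vanishing at $0$ when $\beta\in[\tfrac32,1+\sigma)$. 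The point of this choice is the half-line analogue of \eqref{Eqn:imp},
\[
(-\Delta_{\mathbb N})^\sigma u_\alpha=u_{\alpha-\sigma}\qquad\text{on }\mathbb N,\qquad\sigma<\alpha<1+\sigma,
\]
which follows either from the functional calculus $(-\Delta_{\mathbb N})^\sigma(-\Delta_{\mathbb N})^{-\alpha}=(-\Delta_{\mathbb N})^{-(\alpha-\sigma)}$ applied to $\delta_1$, or directly by acting with $(-\Delta_{\mathbb N})^\sigma$ under the integral sign (it multiplies the generalised eigenfunction $\sin(n\theta)$ by $(2-2\cos\theta)^\sigma$), the only delicate point being the legitimacy of that interchange, which the explicit kernel of $(-\Delta_{\mathbb N})^\sigma$ from \cite{GKS25} reduces to routine estimates. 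Since $\alpha-\sigma\in(0,1)$ the function $u_{\alpha-\sigma}$ is positive, and matching $u_{\alpha-\sigma}/u_\alpha$ against the stated $W_{\alpha,\sigma}$ — the Gamma prefactor being exactly $C_{\alpha-\sigma}/C_\alpha$ — shows $W_{\alpha,\sigma}=u_{\alpha-\sigma}/u_\alpha$. Hence $u_\alpha$ (or $-u_\alpha$) is a positive solution of $\big((-\Delta_{\mathbb N})^\sigma-W_{\alpha,\sigma}\big)v=0$, and, since $(-\Delta_{\mathbb N})^\sigma$ is a graph Laplacian for $\sigma\le1$ (Section~\ref{Sec:graph_Lap}), the Agmon--Allegretto--Piepenbrink theorem for graph Schrödinger operators gives $(-\Delta_{\mathbb N})^\sigma\ge W_{\alpha,\sigma}$, i.e.\ $W_{\alpha,\sigma}$ is a Hardy-weight for all $\sigma<\alpha<1+\sigma$.

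To decide criticality I would pass to the Doob transform $\widetilde H_{\alpha,\sigma}$ of $H_{\alpha,\sigma}:=(-\Delta_{\mathbb N})^\sigma-W_{\alpha,\sigma}$ by $u_\alpha$; because $H_{\alpha,\sigma}u_\alpha=0$ this is a graph Laplacian on $\mathbb N$ with vanishing killing term and edge weights $\widetilde b(n,m)=b_\sigma(n,m)\,u_\alpha(n)\,u_\alpha(m)$ ($b_\sigma$ being the weights of $(-\Delta_{\mathbb N})^\sigma$), with ground-state representation $Q_{H_{\alpha,\sigma}}(u_\alpha w)=\tfrac12\sum_{n,m}\widetilde b(n,m)\,|w(n)-w(m)|^2$ for $w\in C_c(\mathbb N)$; by \cite{KPP17}, $H_{\alpha,\sigma}$ is critical exactly when $\widetilde H_{\alpha,\sigma}$ is recurrent, and then null-critical exactly when $u_\alpha\notin\ell^2(\mathbb N,W_{\alpha,\sigma})$. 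Now $u_\alpha(n)\asymp n^{2\alpha-2}$ and, for $\sigma<1$, $b_\sigma(n,m)\asymp|n-m|^{-1-2\sigma}$ for $n\ne m$ (the half-line correction being harmless at the relevant scales), so $\widetilde b(n,m)\asymp n^{2\alpha-2}m^{2\alpha-2}|n-m|^{-1-2\sigma}$. Testing the Dirichlet energy of $\widetilde H_{\alpha,\sigma}$ against logarithmic cutoffs $w_k\uparrow\mathbf 1$, a dyadic computation should show it tends to $0$ precisely when $4\alpha-4-2\sigma\le-1$, i.e.\ $\alpha\le\tfrac{3+2\sigma}{4}$, so that $\widetilde H_{\alpha,\sigma}$ is recurrent and $H_{\alpha,\sigma}$ critical there; for $\tfrac{3+2\sigma}{4}<\alpha<\tfrac32$ the companion estimate (a finite-energy unit flow to infinity for $\widetilde H_{\alpha,\sigma}$) gives transience, hence subcriticality, and for $\tfrac32\le\alpha<1+\sigma$ the Gamma prefactor of $W_{\alpha,\sigma}$ is $\le0$, so $H_{\alpha,\sigma}\ge(-\Delta_{\mathbb N})^\sigma>0$ is again subcritical. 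When $\sigma=1$ the whole step collapses to the one-dimensional resistance test $\sum_n\big(u_\alpha(n)u_\alpha(n+1)\big)^{-1}\asymp\sum_n n^{4-4\alpha}=\infty\iff\alpha\le\tfrac54$. This gives the asserted equivalence; moreover at $\alpha=\tfrac{3+2\sigma}{4}$ the ground state $\phi=u_\alpha$ has $\phi(n)\asymp n^{\sigma-1/2}$ while $W_\sigma^{\mathrm{op}}(n)\asymp n^{-2\sigma}$, so $\sum_n W_\sigma^{\mathrm{op}}(n)\phi(n)^2\asymp\sum_n n^{-1}=\infty$ — null-criticality — whereas for $\alpha<\tfrac{3+2\sigma}{4}$ the same sum is $\asymp\sum_n n^{4\alpha-4-2\sigma}<\infty$ (critical but positive-critical, hence not optimal). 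Therefore $W_\sigma^{\mathrm{op}}=W_{\frac{3+2\sigma}{4},\sigma}$ is an optimal Hardy-weight.

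The main obstacle is the recurrence/transience analysis just sketched: when $\sigma<1$ the transformed graph $\widetilde H_{\alpha,\sigma}$ is genuinely long-range, so its recurrence has to be extracted from a weighted \emph{fractional} Dirichlet form rather than from a plain one-dimensional resistance series, and isolating the exact threshold $\alpha=\tfrac{3+2\sigma}{4}$ demands a careful dyadic analysis of the interplay between the kernel $|n-m|^{-1-2\sigma}$ and the weight $n^{2\alpha-2}$ — the half-line counterpart of the core estimate in \cite{KN23}. A lesser technical point is making the pointwise identity $(-\Delta_{\mathbb N})^\sigma u_\alpha=u_{\alpha-\sigma}$ rigorous (the interchange with the spectral integral, or the domains in the functional calculus), which the explicit formulas of \cite{GKS25} should reduce to routine bookkeeping.
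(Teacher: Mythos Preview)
Your overall strategy coincides with the paper's: define the Riesz potential/Green function $u_\alpha=\mathcal I_\alpha$, prove $(-\Delta_{\mathbb N})^\sigma\mathcal I_\alpha=\mathcal I_{\alpha-\sigma}$ (the paper's Proposition~\ref{Prop:Imp}), read off $W_{\alpha,\sigma}=\mathcal I_{\alpha-\sigma}/\mathcal I_\alpha$ and the Hardy inequality from Agmon--Allegretto--Piepenbrink, pass to the ground-state (Doob) transform, and decide criticality by a null-sequence of logarithmic cutoffs while null-criticality is the straightforward asymptotic $\sum n^{4\alpha-4-2\sigma}$. The closed form for $\mathcal I_\alpha$ and the justification of the pointwise identity via dominated convergence using the kernel estimates from \cite{GKS25} are exactly what the paper does.

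There are two genuine differences worth noting. First, for $\sigma=1$ your resistance test $\sum_n\big(u_\alpha(n)u_\alpha(n+1)\big)^{-1}\asymp\sum_n n^{4-4\alpha}=\infty\iff\alpha\le 5/4$ is correct and in fact more self-contained than the paper's route: the paper cannot push the logarithmic-cutoff estimate through at $\sigma=1$ (the auxiliary integral $\int_1^\infty(\log t)^2(t-1)^{-1-2\sigma}t^{2\alpha-2}\,dt$ diverges there), so it invokes the Liouville comparison principle (Theorem~\ref{Thrm_Liouville}) against the KPP optimal weight $W^{\mathrm{KPP}}$ and its ground state $n^{1/2}$ to conclude criticality (Proposition~\ref{prop:crit_laplacian}). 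Your argument avoids importing that external result. Second, for the subcriticality direction when $\sigma<1$ and $\tfrac{3+2\sigma}{4}<\alpha<\tfrac32$, the paper does \emph{not} build a finite-energy flow as you propose; instead it argues indirectly (Proposition~\ref{Prop:necess}): it shows that the asymptotic constant $\Psi_\sigma(\alpha)$ in $W_{\alpha,\sigma}\sim\Psi_\sigma(\alpha)n^{-2\sigma}$ is strictly decreasing on $(\tfrac{3+2\sigma}{4},\tfrac32)$, so $W_\sigma^{\mathrm{op}}\ge(1+\lambda)W_{\alpha,\sigma}$ outside a finite set for some $\lambda>0$; then an ``optimality at infinity'' lemma (Lemma~\ref{Lemma:asympt-crit}) forces $\mathcal I_\alpha\in\ell^2(\mathbb N,W_{\alpha,\sigma})$, contradicting the asymptotics. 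Your flow approach is plausible but underspecified: a nearest-neighbour flow only gives transience for $\alpha>5/4$, so for $\tfrac{3+2\sigma}{4}<\alpha\le 5/4$ you would genuinely need to exploit the long-range edges, and you have not indicated how. The paper's comparison argument sidesteps this entirely.
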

As a byproduct of our result, we provide the best constant $\gamma$ in \eqref{Eq:HW}, see Section \ref{Sec:Final-Rem}\footnote{This sharp result has been found after the manuscript was published. In the published version we only proved an upper bound for the best constant}. We also note that the optimal Hardy-weight has the asymptotic $W_\sigma^{\mathrm{op}}(n) \asymp n^{-2\sigma}$ for large $n$ for every $\sigma \in (0,1]$, see Remark \ref{Rmk:weight-asympt}. This shows that there is no special behavior occurring at $\sigma =1/2$ in the context of optimal decay rate of the Hardy-weight for $(-\Delta_{\mathbb{N}})^{\sigma}$, $\sigma \in (0,1]$. Furthermore, it is worth mentioning that for $\sigma =1$ the optimal Hardy-weight given in \eqref{HW-Op_our} corresponds to
\begin{align*}
    W_1^{\mathrm{op}}(n) =\frac{1}{4}\frac{1}{n^2-\frac{9}{16}} =   \frac{1}{4n^2} + \frac{9}{64n^4} + \ldots \,,
\end{align*}
which improves the optimal Hardy-weight in \eqref{HW-Op_KPP} {\it asymptotically}, i.e., $W_1^{\mathrm{op}} > W^{\mathrm{KPP}}$ near infinity.

As we mentioned before, we follow the ideas from \cite{KN23} to prove Theorem \ref{Theorem:Hardy}. One of the crucial ingredients is to obtain an analogous relationship as in \eqref{Eqn:imp} for the operator $(-\Delta_{\mathbb{N}})^{\sigma}$, which is proved in Proposition \ref{Prop:Imp}. The proof of Proposition \ref{Prop:Imp} involves the refined estimates of the Green kernel of the resolvent of $(-\Delta_{\mathbb{N}})^{\sigma}$ obtained in \cite{GKS25}. For $\sigma \in (0,1)$, to show the criticality of $(-\Delta_\mathbb{N})^\sigma-W_{\alpha,\sigma}$, we use the standard techniques of constructing a null-sequence. In this approach, our arguments require $\sigma \in (0,1)$, and it fails for $\sigma =1$. For $\sigma =1$, we use the discrete {\it Liouville comparison principle} \cite{DKP24} to prove the criticality. The proof of null-criticality is rather straightforward due to the asymptotics of the Riesz potential $\mathcal{I}_{\alpha}$ (given by \eqref{Eq:Riesz}) obtained in Remark \ref{Eq:Asymp_Riesz}.

Next we discuss an immediate application of our study of optimal Hardy-weights in unique continuation theory.
Very recently, the study of optimal Hardy-weights together with the Liouville comparison principle has been used to establish certain unique continuation results for {\it positive} Schr\"odinger operators in the continuum \cite{DP24} and discrete \cite{DKP24} settings. More precisely, they gave a decay criterion that ensures when a solution to the Schr\"odinger equation associated to a positive Schr\"odinger operator with potentials bounded from above is trivial. In the literature, such results are known as the Landis-type unique continuation result as they concern the conjecture proposed by {Landis \cite{Landis}}. There is a huge bibliography on this topic. We refer to the recent comprehensive survey \cite{Survey} for this. Now, as we obtain optimal Hardy-weights in Theorem \ref{Theorem:Hardy}, we simply use the ideas from \cite{DKP24,DP24} to obtain the following unique continuation result. 
\begin{theorem} \label{Thm:UCP}
    Let $\sigma \in (0,1]$ and $V \leq 0$ except on a finite subset. Assume that $(-\Delta_{\mathbb{N}})^{\sigma} + V \geq 0$ and $\left[ (-\Delta_{\mathbb{N}})^{\sigma} + V\right]u  =0$ on $\mathbb{N}$ is such that 
    \begin{align*}
        |u| = O(n^{\sigma -\frac{1}{2}}) \,, \qquad \liminf_{n \rightarrow \infty} |u|n^{2-2\sigma}=0 \,.
    \end{align*}
    Then $u \equiv 0$.
\end{theorem}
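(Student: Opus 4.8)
\emph{Plan.} The proof will follow the scheme of \cite{DKP24,DP24}, feeding the optimal Hardy-weight of Theorem~\ref{Theorem:Hardy} into a Liouville comparison argument. Set $P_{0}:=(-\Delta_{\mathbb N})^{\sigma}-W_{\sigma}^{\mathrm{op}}$ and $P_{1}:=(-\Delta_{\mathbb N})^{\sigma}+V$. By Theorem~\ref{Theorem:Hardy}, $P_{0}$ is a non-negative graph Schr\"odinger operator on $\mathbb N$ (here $\sigma\le1$ is used, so that $(-\Delta_{\mathbb N})^{\sigma}$ is a graph Laplacian, cf.\ Section~\ref{Sec:graph_Lap}) which is both critical and null-critical. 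Let $\phi_{0}$ be its ground state, i.e.\ the unique positive solution, up to a positive multiple, of $P_{0}\phi_{0}=0$; since $W_{\sigma}^{\mathrm{op}}$ is the Hardy-weight generated by the Riesz potential $\mathcal I_{(3+2\sigma)/4}$, the asymptotics in Remark~\ref{Eq:Asymp_Riesz} yield $\phi_{0}(n)\asymp n^{\sigma-\frac12}$. In particular the first hypothesis on $u$ reads $|u|=O(\phi_{0})$ near infinity.

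Next I would reduce to the case where $u$ has a fixed sign near infinity: since $P_{1}\ge0$ and $V\le0$ off a finite set, a discrete oscillation/Sturm argument (or passing to $|u|$ via the discrete Kato inequality together with Harnack) gives this, so assume $u>0$ near infinity; then $(-\Delta_{\mathbb N})^{\sigma}u=-Vu\ge0$ near infinity, i.e.\ $u$ is a positive supersolution of the subcritical operator $(-\Delta_{\mathbb N})^{\sigma}$ near infinity. Now the discrete Liouville comparison principle of \cite{DKP24} applies: $P_{0}$ is critical with ground state $\phi_{0}$, $P_{1}\ge0$, $P_{1}$ coincides with $(-\Delta_{\mathbb N})^{\sigma}=P_{0}+W_{\sigma}^{\mathrm{op}}$ up to a non-positive potential outside a finite set, and $u$ solves $P_{1}u=0$ with $u=O(\phi_{0})$ near infinity; hence $u=c\,\phi_{0}$ for some constant $c$. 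The underlying mechanism is that a positive supersolution of a subcritical operator cannot decay faster than its Green's function, while the null-criticality of $P_{0}$ rigidifies the behaviour of $u$ at infinity.

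Finally, if $c\ne0$ then $|u(n)|=|c|\,\phi_{0}(n)\asymp n^{\sigma-\frac12}$, so $|u(n)|\,n^{2-2\sigma}\asymp n^{\frac32-\sigma}\to\infty$ because $\sigma\le1<\tfrac32$, contradicting $\liminf_{n\to\infty}|u|\,n^{2-2\sigma}=0$; therefore $c=0$ and $u\equiv0$. Note that the exponent $2-2\sigma$ in the statement is exactly the one matching the growth of $\phi_{0}$, and that the strict inequality $\sigma<\tfrac32$ (the subcriticality range of $(-\Delta_{\mathbb N})^{\sigma}$) is what makes this last step work.

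The main obstacle is the comparison step. Two things need care: (i) establishing the precise asymptotics $\phi_{0}(n)\asymp n^{\sigma-\frac12}$, which rests on the fine behaviour of the Riesz potential $\mathcal I_{\alpha}$ and ultimately on the Green-kernel estimates of \cite{GKS25} already underlying Theorem~\ref{Theorem:Hardy}; and (ii) coping with the fact that $V$ is only assumed bounded above (by $0$, off a finite set) and $P_{1}\ge0$ only as a quadratic form, so that $P_{1}$ need not be pointwise comparable to $P_{0}$. This is precisely the scenario handled in \cite{DKP24,DP24}: one localises near infinity, works there with $|u|$ (or with $u$ once it has a sign) as a sub/super-solution, uses the invariance of criticality and null-criticality under finite perturbations of the potential, and compares with $\phi_{0}$ via the ground-state (Doob) transform attached to $P_{0}$. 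Re-running that machinery in the present fractional--discrete setting is the substance of the argument.
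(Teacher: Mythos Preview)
Your overall architecture---feed the optimal weight into a Liouville comparison---is the same as the paper's, but the crucial conclusion you draw from the comparison step is wrong, and this hides a whole second step that the paper needs.

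The Liouville comparison principle (Theorem~\ref{Thrm_Liouville}, i.e.\ \cite[Theorem 2.1]{DKP24}) does \emph{not} yield $u=c\,\phi_{0}$. Its output is that $P_{1}$ is critical and that $u>0$ is the Agmon ground state of $P_{1}$, not of $P_{0}$. Since $P_{0}$ and $P_{1}$ are different operators, their ground states are in general different; there is no reason whatsoever to expect $u\asymp n^{\sigma-\frac12}$ from this. In particular, null-criticality of $P_{0}$ plays no role here and does not ``rigidify'' the behaviour of $u$ at infinity in the way you suggest. Consequently your final contradiction, based on $|u(n)|\asymp n^{\sigma-\frac12}$, is unfounded.

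What the paper actually does after the Liouville step is a \emph{second} comparison: once $u>0$ everywhere and $V\le 0$ off a finite set, $(-\Delta_{\mathbb N})^{\sigma}u\ge 0$ outside a finite set, so $u$ is a positive supersolution of the \emph{subcritical} operator $(-\Delta_{\mathbb N})^{\sigma}$ there. Its positive minimal Green function $\mathcal G_{\sigma}\asymp n^{2\sigma-2}$ has minimal growth at infinity, hence $\mathcal G_{\sigma}\le C u$ on $\mathbb N$, giving $u(n)\gtrsim n^{2\sigma-2}$. This is what contradicts $\liminf_{n\to\infty}|u|\,n^{2-2\sigma}=0$ and explains the exponent $2-2\sigma$ in the statement. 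You mention this mechanism in passing but never actually use it.

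A smaller point: your reduction ``$u$ has fixed sign near infinity via Sturm/Kato/Harnack'' is vague for a nonlocal graph Laplacian. The clean way (used in the paper) is to apply the Liouville principle directly to $u_{+}$: by \cite[Lemma 3.2]{DKP24} one has $P_{1}[u_{+}]\le 0$ on all of $\mathbb N$, and the first decay hypothesis $|u|=O(n^{\sigma-\frac12})=O(\phi_{0})$ furnishes condition (c). The conclusion is then $u=u_{+}>0$ on all of $\mathbb N$, with no localisation needed.
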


The rest of this article is organised as follows. The next section focuses on the prerequisites that are required for the development of this article. In Section \ref{Optimal Hardy-weight}, we prove our main result Theorem \ref{Theorem:Hardy}. Theorem \ref{Thm:UCP} is proven in Section \ref{Sec:UCP}. Section \ref{Sec:Final-Rem} shows the sharpness of the constant and includes our concluding remarks. Some relevant computations are provided in Section \ref{comp}.


\section{Preliminaries}
In this section, we introduce the necessary concepts and definitions that are essential for this article. We collect some spectral properties of the fractional Laplacian on $\mathbb{N}$ proved in \cite{GKS25} and recall standard tools from criticality theory of graph Laplacian \cite{KPP17}. Throughout the article, we use the following notation and convention.
\begin{itemize}
    \item \textit{Kronecker delta function} $\delta_{n,m}=1$ if $n=m$ and $\delta_{n,m}=0$ otherwise.
    \item \textit{Chebyshev polynomials} of second kind $U_n(x)$ with $n\in \mathbb{N}$ and $x\in[-1,1]$.
    \item \textit{Gamma function} $\Gamma(x)=\int_0^\infty t^{x-1}e^{-t}dt$.
    \item \textit{Binomial coefficients} $\left(\begin{matrix} a \\ b \end{matrix} \right)=\frac{\Gamma(a+1)}{\Gamma(b+1)\Gamma(a-b+1)} $ with $a,b\in\mathbb{R}$.
    \item \textit{Pochhammer symbol} $(\beta)_k:=\beta(\beta+1)\ldots(\beta+k-1)$ with $\beta \in \mathbb{R}$.
    \item Since we can regard functions over $\mathbb{N}$ as sequences, we will use the notation $f(n)=f_n$ indistinctively to denote the value of $f$ at $n\in\mathbb{N}$.
    \item For positive functions $f$ and $g$ on $\mathbb{N}$, we denote $f\asymp g$ if there exists $C>0$ such that $C^{-1}f(n)\leq g(n)\leq Cf(n)$ for all $n \in \mathbb{N}$. 
    \item For positive functions $f$ and $g$ on $\mathbb{N}$, we denote $f\sim g$, if $\frac{f(n)}{g(n)} \rightarrow 1$ as $n \rightarrow \infty$.
    \item For functions $f$ and $g$ we denote $f\lesssim g $ if there exists a universal constant $C>0$ such that $f\leq Cg$.
    \item For a potential $ W: \mathbb{N} \to \mathbb{R} $, $\langle \cdot,\cdot \rangle_{\ell^2(\mathbb{N},W)}$ denotes the weighted $\ell^2$-inner product on $\mathbb{N}$ with respect to the weight $W$, i.e., $\langle \phi, \psi\rangle_{\ell^2(\mathbb{N},W)}= \sum_{\mathbb{N}} \phi(n) \psi(n) W(n)$. For $W\equiv 1$, we simply write $\ell^2(\mathbb{N})$ instead of $\ell^2(\mathbb{N},W)$.
    \item For $K \subseteq \mathbb{N} $, the characteristic function of $K$ is denoted by $1_K$. 
\end{itemize}

\subsection{The fractional Laplacian on half-line.} \label{Sec:graph_Lap}

We are going to take advantage of the fact that for a certain range of exponents $\sigma$ the fractional Laplacian $(-\Delta_{\mathbb{N}})^{\sigma}$ on $\mathbb{N}$ can be written as the Laplacian associated with a graph. This will enable us to use the theory of criticality of Schr\"odinger operators on graphs developed in \cite{KPP17}. {\it Graph Laplacian} operators are defined as follows (cf. \cite[Chapter 1.1]{KLW21}).

\begin{definition}[Graph Laplacian]\label{Def:graph-laplacian}
    Let $\{X,b,q\}$ be a graph with $X$ a countable set, $b: X\times X\rightarrow \left[0,\infty\right)$ satisfying
    \begin{itemize}
        \item $b_{n,m}=b_{m,n}$ for all $n,m\in X$.
        \item $b_{n,n}=0$ for all $n\in X$.
        \item $\sum_{n\in X} b_{n,m}<\infty$ for all $m\in X$.
    \end{itemize}
    and  $q: X\rightarrow \left[0,\infty\right)$.
    The {\it graph Laplacian} $\mathcal{L}_{X}$ associated with $\{X,b,q\}$ acting on 
    \begin{equation*}
        \mathcal{F}_{X}=\{f\in C(X)\,|\, \sum_{m\in X}b_{n,m}|f(n)|<\infty\,, \forall\  n\in X\}
    \end{equation*}
    is the operator
    \begin{equation}
        (\mathcal{L}_{X}f)(n):=\sum_{m\in X}b_{n,m}(f(n)-f(m))\,+\,q(n)f(n)\,.
    \end{equation}
\end{definition}
See \cite{KLW21} for general properties of the graph Laplacian and discrete Schr\"odinger operators. The Laplace operator on the discrete half-line $\mathbb{N}:=\{1,2,3,\ldots\}$ is defined as 
\begin{equation}\label{Discrete-Laplacian}
    (-\Delta_\mathbb{N})f(n):=-f(n-1)+2f(n)-f(n+1) \,,
\end{equation}
where $f:\mathbb{N}\rightarrow\mathbb{R}$ with the convention that $f(0)=0$, which can be thought of as a ``Dirichlet boundary condition''. Observe that this is the same operator as \cite{KPPAMS} on $\mathbb{N}$. Now we want to express the same operator as a graph Laplacian. 
Note that (\ref{Discrete-Laplacian}) together with the boundary condition can be written as a graph Laplacian over $X=\mathbb{N}$ 
\begin{equation} \label{gr_Lap_Rep}
     (-\Delta_{\mathbb{N}})f(n)=\sum_{m\in\mathbb{N}}b_{n,m}\big(f(n)-f(m)\big)\,+\, q_nf(n)
\end{equation}
with $b_{n,m}=1$ if $|n-m|=1$ and $b_{n,m}=0$ otherwise, and $q_n=\delta_{1,n}$. This graph is locally finite, i.e., for any fixed $n\in \mathbb{N}$, $b_{n,m}\neq0$ only for finitely many $m\in \mathbb{N}$. Observe that, when we view $(-\Delta_{\mathbb{N}})$ as a graph Laplacian in \eqref{gr_Lap_Rep}, we do not need to define $f(0)$. The boundary condition ($f(0)=0$) in \eqref{Discrete-Laplacian} is encoded in the term $q_n$ in \eqref{gr_Lap_Rep}.

Since $-\Delta_\mathbb{N}$ is a bounded self-adjoint operator on $\ell^2(\mathbb{N})$ with the spectrum $\sigma(-\Delta_\mathbb{N})=[0,4]$, we can define the positive fractional powers of $-\Delta_\mathbb{N}$ using its spectral representation, following the strategy described in \cite[Section 2.1]{GKS25}. Such a spectral representation is given in terms of Chebyshev polynomials of second kind $\{U_n\}_{\mathbb{N}\cup \{0\}}$, see for example \cite[Chapter 22]{AS} and \cite[Chapter 2]{Sze67} for the definition and properties of these polynomials. Observe that Chebyshev polynomials fulfill the recurrence relation
\begin{equation}
    \label{chev}
    U_{n+1}(x)-2xU_n(x)+U_{n-1}(x)=0
\end{equation}
with $U_0(x):=1$ and $U_1(x):=2x$. It is known that the set of normalised Chebyshev polynomials $\big\{\sqrt{\frac{2}{\pi}}U_n\ | \ n  \in \mathbb{N}\cup \{0\}\big\}$ constitutes an orthonormal basis of the Hilbert space $L^2((-1,1),\sqrt{1-x^2}\ \mathrm{d} x)$. Therefore we can define a map $\mathcal{U}$ such that
\begin{equation} \label{eq:def_U}
    \mathcal{U} e_n:=\sqrt{\frac{2}{\pi}}U_{n-1}
\end{equation}
with $e_n(m)=\delta_{n,m}$, and  extends to a unitary operator. We point out that in the context of \cite{CR18}, $\mathcal{U}$ will play the role of a Fourier transform.
Using (\ref{chev}) we get
\begin{equation}\label{spec}
    -\Delta_\mathbb{N}=\mathcal{U}^{-1}M_{2(1-x)}\mathcal{U}
\end{equation}
on $\ell^2(\mathbb{N})$ with $M_{2(1-x)}$ being the multiplication operator on $L^2((-1,1),\sqrt{1-x^2}\ \mathrm{d} x)$ by the function $x \mapsto 2(1-x)$. From (\ref{spec}), we get a spectral definition (cf. \cite[Definition A.21]{KLW21}) of the fractional Laplacian on $\mathbb{N}$ as follows
\begin{align} \label{eq:def_frac}
    (-\Delta_\mathbb{N})^\sigma=\mathcal{U}^{-1}M_{2^\sigma(1-x)^\sigma}\mathcal{U} \,,
\end{align}
for $\sigma >0$. 

In order to write $ (-\Delta_{\mathbb{N}})^\sigma$ as a graph Laplacian, we start from the explicit matrix elements obtained in \cite[Proposition 2.1]{GKS25}:
\begin{equation} \label{Eq:1}
    (-\Delta_{\mathbb{N}})^\sigma f(n) =\sum_{m \in \mathbb{N}} \mathcal{K}^\sigma_{m,n}f(m)  \,,
\end{equation}
where
\begin{align}\label{matrix-element}
\mathcal{K}^\sigma_{m,n}  &= \frac{2^{\sigma+1}}{\pi} \int_{-1}^1 (1 -x)^{\sigma}U_{m-1}U_{n-1} \sqrt{1-x^2} \ dx \nonumber \\
&=  
    (-1)^{m+n} \left[\left(\begin{matrix}
 2\sigma\\
\sigma+m-n
\end{matrix} 	
 \right)-\left(\begin{matrix}
 2\sigma\\
\sigma+m+n
\end{matrix} 	
 \right)\right].
\end{align}
Notice that $\mathcal{K}^\sigma_{m,n}=\mathcal{K}^\sigma_{n,m}$. Define $\widetilde{\mathcal{K}}^\sigma_{m,n}={\mathcal{K}}^\sigma_{m,n}$ if $m \neq n$ and $0$ otherwise.
Now we rewrite $ (-\Delta_{\mathbb{N}})^\sigma $ as follows
\begin{equation} \label{Eq:graph_Lap}
    (-\Delta_{\mathbb{N}})^\sigma f(n) =-\sum_{m \in \mathbb{N}} \widetilde{\mathcal{K}}^\sigma_{m,n}(f(n) -f(m) )+f(n) \sum_{m \in \mathbb{N}} \mathcal{K}^\sigma_{m,n} \,.
\end{equation}
If $-\widetilde{\mathcal{K}}^\sigma_{m,n}\geq 0$, this expression can be seen as the graph Laplacian of the graph $ \{\mathbb{N,}-\widetilde{\mathcal{K}}^\sigma,  R^\sigma\}$, with a potential term $$ R^\sigma_n :=\sum_{m \in \mathbb{N}} \mathcal{K}^\sigma_{m,n} \,.$$
Next proposition shows that this is possible when $\sigma \in (0,1]$. 

\begin{proposition} \label{Prop:graphLap}
   Let $\sigma \in (0,1]$. Then $\mathcal{K}^{\sigma}_{m,n} \leq 0$ for all $m\neq n$ \,. In particular,  $ \{\mathbb{N,}-\widetilde{\mathcal{K}}^\sigma,  R^\sigma\}$ is a graph Laplacian.
\end{proposition}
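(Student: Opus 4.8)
The plan is to show directly that $\mathcal{K}^\sigma_{m,n}\le 0$ for $m\ne n$, using the explicit formula
\[
\mathcal{K}^\sigma_{m,n} = (-1)^{m+n}\left[\binom{2\sigma}{\sigma+m-n}-\binom{2\sigma}{\sigma+m+n}\right],
\]
where $\binom{2\sigma}{a}=\frac{\Gamma(2\sigma+1)}{\Gamma(a+1)\Gamma(2\sigma-a+1)}$. Writing $a=\sigma+m-n$ and $b=\sigma+m+n$, note $b>a$, both exceed $\sigma$, and $b-a=2n$ is a positive even integer; also $m+n$ and $m-n$ have the same parity, so $(-1)^{m+n}=(-1)^{a-\sigma}=(-1)^{b-\sigma}$. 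The first reduction is to expand each generalized binomial coefficient using the reflection formula for the Gamma function. Concretely, $\frac{1}{\Gamma(a+1)\Gamma(2\sigma-a+1)}$ can be rewritten via $\Gamma(2\sigma-a+1)=\Gamma(1-(a-2\sigma))$ and the reflection formula $\Gamma(z)\Gamma(1-z)=\pi/\sin(\pi z)$, which will produce a factor $\sin(\pi(a-2\sigma))=\sin(\pi a)\cos(2\pi\sigma)-\cos(\pi a)\sin(2\pi\sigma)$. Since $a=\sigma+m-n$, one has $\sin(\pi a)=\sin(\pi\sigma)(-1)^{m-n}$ and $\cos(\pi a)=\cos(\pi\sigma)(-1)^{m-n}$, so the sign factor $(-1)^{m+n}=(-1)^{m-n}$ will combine cleanly with these trigonometric terms.

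After this substitution, the bracket becomes (up to the positive constant $\Gamma(2\sigma+1)/\pi$ times a power of $\sin$) a difference of two terms of the form $\dfrac{(\text{nonnegative trig factor})\,\Gamma(a-2\sigma)}{\Gamma(a+1)}$ minus the analogous term with $b$. The key point is that for $\sigma\in(0,1)$ the quantity $a-2\sigma$ and $b-2\sigma$ can be negative (when $m-n$ or $m+n$ is small relative to $\sigma$), so I will need to treat $\Gamma(a-2\sigma)$ carefully via the ratio $\Gamma(a-2\sigma)/\Gamma(a+1)=1/\big[(a-2\sigma)(a-2\sigma+1)\cdots a\big]=1/(a-2\sigma)_{2\sigma+1}$ — more precisely, I would use the Pochhammer/Beta-integral representation. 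The cleanest route: use the Beta function, $\frac{\Gamma(a-2\sigma)}{\Gamma(a+1)}=\frac{1}{\Gamma(2\sigma+1)}B(a-2\sigma,2\sigma+1)$ when $a>2\sigma$, extended by analytic continuation, and then reduce the claim $\mathcal{K}^\sigma_{m,n}\le0$ to the monotonicity in the lower index: the map $a\mapsto \big|\binom{2\sigma}{\sigma+a}\big|$ (suitably signed) is decreasing as $a$ moves away from its symmetric center. Indeed, $\binom{2\sigma}{k}$ is largest near $k=\sigma$ and decays in $|k-\sigma|$; since $|b-\sigma|=m+n>m-n=|a-\sigma|$ wait — one must instead compare $|{\sigma+m-n}-\sigma|=|m-n|$ against $m+n$, and $m+n>|m-n|$ always, which is exactly what gives $\big|\binom{2\sigma}{\sigma+m+n}\big|\le\big|\binom{2\sigma}{\sigma+m-n}\big|$, hence the bracket has the right sign once the common sign factor is accounted for.

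The main obstacle I anticipate is handling the generalized (non-integer) binomial coefficients when the lower argument can fall outside $[0,2\sigma]$, i.e., rigorously establishing the unimodality/decay estimate $\big|\binom{2\sigma}{k_1}\big|\ge\big|\binom{2\sigma}{k_2}\big|$ whenever $|k_1-\sigma|\le|k_2-\sigma|$ with $k_1,k_2\equiv\sigma\pmod 1$ and $\sigma\in(0,1]$. The endpoint $\sigma=1$ is easy (ordinary binomial coefficients, with $\binom{2}{k}=0$ for $k\ge 3$), so the substance is $\sigma\in(0,1)$. I would prove the decay estimate by writing $\binom{2\sigma}{\sigma+j}$ ($j=m\pm n\in\mathbb{Z}$) via the reflection formula as $\dfrac{(-1)^j\sin(\pi\sigma)\,\Gamma(2\sigma+1)}{\pi}\cdot\dfrac{\Gamma(j-\sigma)}{\Gamma(\sigma+j+1)}$ for $j\ge 1$, reducing to showing that $j\mapsto \dfrac{\Gamma(j-\sigma)}{\Gamma(j+\sigma+1)}$ is positive and strictly decreasing in $j\ge 1$ — which follows from the ratio $\dfrac{\Gamma(j+1-\sigma)}{\Gamma(j+\sigma+2)}\Big/\dfrac{\Gamma(j-\sigma)}{\Gamma(j+\sigma+1)}=\dfrac{j-\sigma}{j+\sigma+1}<1$ — together with the explicit small cases $j=0$ and, if $m=n\pm$ small, a direct check of $\binom{2\sigma}{\sigma},\binom{2\sigma}{\sigma\pm1}$. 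Finally, one should double-check the boundary interaction with $q_n=R^\sigma_n$: the proposition only asserts the off-diagonal sign and that $\{\mathbb{N},-\widetilde{\mathcal{K}}^\sigma,R^\sigma\}$ is a graph Laplacian, so besides $-\widetilde{\mathcal{K}}^\sigma_{m,n}\ge0$ I would note symmetry $\mathcal{K}^\sigma_{m,n}=\mathcal{K}^\sigma_{n,m}$ (already observed), vanishing diagonal of $\widetilde{\mathcal{K}}^\sigma$ by definition, summability $\sum_m|\mathcal{K}^\sigma_{m,n}|<\infty$ (immediate from $(-\Delta_\mathbb{N})^\sigma$ being bounded on $\ell^2$, or from the Gamma-ratio asymptotics $\mathcal{K}^\sigma_{m,n}=O(|m-n|^{-1-2\sigma})$), and nonnegativity of $R^\sigma_n=\sum_m\mathcal{K}^\sigma_{m,n}$, which holds because $(-\Delta_\mathbb{N})^\sigma\mathbbm{1}\ge0$ in an appropriate limiting sense or directly from $R^\sigma_n=-\sum_{m\ne n}\widetilde{\mathcal{K}}^\sigma_{m,n}\cdot(\text{boundary contribution})\ge0$ via the explicit value of $\sum_m\mathcal{K}^\sigma_{m,n}$.
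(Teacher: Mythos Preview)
Your plan is correct and lands on essentially the same argument as the paper: after applying the reflection formula you arrive at $\mathcal{K}^\sigma_{m,n}=c_\sigma\big(G(m+n)-G(m-n)\big)$ with $G(x)=\Gamma(x-\sigma)/\Gamma(x+\sigma+1)$ and a positive constant $c_\sigma$ for $\sigma\in(0,1)$, then conclude by monotonicity of $G$; your ratio identity $G(j+1)/G(j)=(j-\sigma)/(j+\sigma+1)<1$ is a slightly cleaner alternative to the paper's digamma computation $G'(x)<0$, and suffices since only integer arguments $j=m\pm n\ge1$ occur (no separate ``small cases'' are needed). The one place where your plan is thin is the nonnegativity of $R^\sigma_n$: the heuristic ``$(-\Delta_\mathbb{N})^\sigma\mathbbm{1}\ge0$ in a limiting sense'' is not straightforward to justify, and the paper instead evaluates $R^\sigma_n=\sum_m\mathcal{K}^\sigma_{m,n}$ explicitly (a telescoping sum of binomial coefficients yielding $R^\sigma_n=\tfrac{2}{\pi}\Gamma(2\sigma)\,n\sin(\pi\sigma)\,\Gamma(n-\sigma)/\Gamma(n+\sigma+1)\ge0$), which is the route you should take.
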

\begin{proof}
If $\sigma =1$, the conclusion follows easily from  \eqref{gr_Lap_Rep}. Now, consider $\sigma \in (0,1)$.   First notice that, by \eqref{matrix-element} and an application of \cite[Proposition 1]{LR18} (see Lemma \ref{LR18-comp} for details),
\begin{align*}
        \mathcal{K}^\sigma_{m,n}&=(-1)\left(\frac{(-1)^{m-n+1}\Gamma(2\sigma+1)}{\Gamma(\sigma+m-n+1)\Gamma(\sigma-m+n+1)}-\frac{(-1)^{m+n+1}\Gamma(2\sigma+1)}{\Gamma(\sigma+m+n+1)\Gamma(\sigma-m-n+1)}\right)\\
        &=c_\sigma\left(\frac{\Gamma(m+n-\sigma)}{\Gamma(m+n+\sigma+1)}-\frac{\Gamma(m-n-\sigma)}{\Gamma(m-n+\sigma+1)}\right)=c_\sigma(G(m+n)-G(m-n))
    \end{align*}
    with 
    \begin{equation*}
        c_\sigma= \frac{(-1)\Gamma(\sigma + 1/2)4^\sigma}{\sqrt{\pi}\Gamma(-\sigma)}
    \end{equation*}
    and $G(x)=\frac{\Gamma(x-\sigma)}{\Gamma(x+\sigma+1)}$. Note that for $\sigma \in (0,1)$, $c_\sigma>0$.
    
    Since $ \mathcal{K}^\sigma_{m,n}$ is a symmetric matrix, we assume, without loss of generality, that $m > n$. We now check that $G(x)$ is a decreasing function:
    \begin{align*}
        G'(x)&=\frac{\Gamma(x-\sigma)\psi^{(0)}(x-\sigma)\Gamma(x+\sigma+1)-\Gamma(x-\sigma)\psi^{(0)}(x+\sigma+1)\Gamma(x+\sigma+1)}{\Gamma(x+\sigma+1)^2}\\
        &=\frac{\Gamma(x-\sigma)\Gamma(x+\sigma+1)}{\Gamma(x+\sigma+1)^2}(\psi^{(0)}(x-\sigma)-\psi^{(0)}(x+\sigma+1))
    \end{align*}
    with $\psi^{(0)}(x)$ being the digamma function (see \cite[Chapter 6.3]{AS}). Since $\psi^{(0)}(x)$ is an increasing function for $x>0$ and $\sigma \in (0,1)$, it follows that $G'(x)<0$ for all $x \geq 1$. Hence, $\mathcal{K}^{\sigma}_{m,n} \leq 0$ for all $m\neq n$ \,.

 Now, we compute the potential term \begin{align}\label{Graph-potential}
     R^\sigma_n&:=\sum_{m \in \mathbb{N}} \mathcal{K}^\sigma_{m,n}=\sum_{m \in \mathbb{N}}(-1)^{m+n} \left[\left(\begin{matrix}
 2\sigma \nonumber\\
\sigma+m-n
\end{matrix} 	
 \right)-\left(\begin{matrix}
 2\sigma\\
\sigma+m+n
\end{matrix} 	
 \right)\right] \\ &=\sum_{m=-n+1}^\infty (-1)^{m} \left(\begin{matrix}
 2\sigma\\
\sigma+m
\end{matrix} 	
 \right) - \sum_{m=n+1}^\infty (-1)^{m} \left(\begin{matrix}
 2\sigma\\
\sigma+m
\end{matrix} 	
 \right)=\sum_{m=-n+1}^{n} (-1)^{m} \left(\begin{matrix}
 2\sigma\\
\sigma+m
\end{matrix} 	
 \right) \nonumber\\ &=-2 (-1)^n n \frac{\Gamma(2\sigma)}{\Gamma(1+\sigma-n)\Gamma(1+\sigma+n)}=\frac{2}{\pi}\frac{\Gamma(2\sigma)\Gamma(n-\sigma)}{\Gamma(1+\sigma+n)}n\sin(\pi\sigma),
\end{align}
where we have used Lemma \ref{potential-sum} in the second to last equality to compute the sum. From \eqref{Graph-potential} we can see that $R^{\sigma}$ is non-negative for $\sigma \in (0,1]$. This completes the proof of the proposition.
\end{proof}
\begin{rmk} \label{Rmk:graphLap} \rm
We make some important remarks here.
\begin{enumerate}[label=(\roman*)]
\item Note that \eqref{Eq:graph_Lap} is well defined if $f \in \mathcal{F}_{\mathbb{N}}^{\sigma}$, where
$$\mathcal{F}_{\mathbb{N}}^{\sigma}:=\{ f : \mathbb{N} \rightarrow \mathbb{R}: \sum_{m \in \mathbb{N}} \widetilde{\mathcal{K}}^\sigma_{m,n}|f(m)| < \infty \ \ \mbox{for each} \ n \in \mathbb{N}\} \,,$$
i.e., $\mathcal{F}_{\mathbb{N}}^{\sigma}$ is the domain of $(-\Delta_{\mathbb{N}})^{\sigma}$ when viewed as a graph Laplacian.
Using (\ref{Eq:Asmp_K}), one can verify that $\mathcal{F}_{\mathbb{N}}^{\sigma} \supseteq l^{\infty}(\mathbb{N})$. In particular, $\mathcal{F}_{\mathbb{N}}^{\sigma} \supseteq \ell^2(\mathbb{N})$.
    \item In fact $-\widetilde{\mathcal{K}}^{\sigma} $ is sign-changing if $\sigma > 1$. Indeed, in the proof above, if $1<\sigma<3/2$ then $c_\sigma<0$ and one can check that $G(m+n)-G(m-n)>0$ if $m-n=1$, and $G(m+n)-G(m-n)<0$ otherwise.  Therefore, the operator $(-\Delta_{\mathbb{N}})^{\sigma}$ can be viewed as a graph Laplacian if and only if $\sigma \in (0, 1]$. This is the main obstacle that prevents us from extending our result to $\sigma\in (1,3/2)$.
    \item Using the graph-Laplacian form of $(-\Delta)^{\sigma}$, i.e., \eqref{Eq:graph_Lap}, and the symmetry of the kernel, we can write down the associated quadratic form as follows.
\begin{align}\label{q_form}
\begin{split}
    \langle  f,(-\Delta_{\mathbb{N}})^\sigma f\rangle_{\ell^2(\mathbb{N})}&=-\sum_{n\in\mathbb{N}}\sum_{m\in\mathbb{N}}\mathcal{K}^\sigma_{m,n}(f_n -f_m )f_n +\sum_{n\in\mathbb{N}}R^\sigma_nf^2_n \\ &= -\frac{1}{2}\sum_{n\in\mathbb{N}}\sum_{m\in\mathbb{N}}\mathcal{K}^\sigma_{m,n}(f_m -f_n )^2+\sum_{n\in\mathbb{N}}R^\sigma_nf^2_n .
    \end{split}
\end{align}
    \item Observe that
   \begin{align*}
       \mathcal{K}^\sigma_{m,n}&=-c_\sigma\left(\frac{\Gamma(m-n-\sigma)}{\Gamma(m-n+\sigma+1)}-\frac{\Gamma(m+n-\sigma)}{\Gamma(m+n+\sigma+1)}\right)\\
       &=-c_\sigma\frac{\Gamma(m-n-\sigma)}{\Gamma(m-n+\sigma+1)}\left(1-\frac{\prod_{k=0}^{2n}(m-n-\sigma+k)}{\prod_{k=0}^{2n}(m-n+\sigma+1+k)}\right)
   \end{align*}
   In particular, since $\left(1-\frac{\prod_{k=0}^{2n}(m-n-\sigma+k)}{\prod_{k=0}^{2n}(m-n+\sigma+1+k)}\right)\leq1$, this means
   \begin{equation} \label{Eq:Asmp_K}
       |\mathcal{K}^\sigma_{m,n}|\lesssim |m-n|^{-2\sigma-1}.
   \end{equation}
\item In \eqref{Graph-potential}, using the well known asymptotic behaviour of the Gamma function
\begin{equation}
\label{eq:asympga}
    \frac{\Gamma(x+a)}{\Gamma(x+b)}\sim x^{a-b}
\end{equation}
for $a$ and $b$ real numbers \cite[5.11.12]{AR}, we see that the potential $R^{\sigma}$ decays as $n^{-2\sigma}$.
\item In view of \eqref{q_form} and \eqref{Eq:Asmp_K}, it is also relevant to study the following Hardy-type inequality
\begin{align} \label{H_var}
    \sum_{m,n \in \mathbb{N}, m\neq n} \frac{|f(m)-f(n)|^2}{|m-n|^{1+2\sigma}} \gtrsim \sum_{m \in \mathbb{N}} W(n) (f(n))^2 \,, \ \ \forall f \in C_c(\mathbb{N})
\end{align}
with suitable $W: \mathbb{N} \rightarrow [0,\infty) $.   We refer to the recent article \cite{Dy} for such a study. Note that the energy form $\sum_{m,n \in \mathbb{N}, m\neq n} \frac{|f(m)-f(n)|^2}{|m-n|^{1+2\sigma}}$ is different from $\langle (-\Delta_{\mathbb{N}})^{\sigma}  f,f\rangle_{\ell^2(\mathbb{N})}$  due to the sign changing $\mathcal{K}^{\sigma}$ and the presence of the potential $R^{\sigma}$ in the quadratic form $\langle (-\Delta_{\mathbb{N}})^{\sigma}  f,f\rangle_{\ell^2(\mathbb{N})}$. Moreover, it should be noted that the validity of the inequality \eqref{H_var} does not imply 
\begin{align} \label{H_og}
    \langle (-\Delta_{\mathbb{N}})^{\sigma}  f,f\rangle_{\ell^2(\mathbb{N})}\gtrsim \langle Wf,f\rangle_{\ell^2(\mathbb{N})} \,,
\end{align}
or vice versa. For instance, in \cite[Theorem 1.3]{Dy}, the author established \eqref{H_var} for all $\sigma >0$ with a specific $W$, but as we mentioned before, \eqref{H_og} does not hold for $\sigma \geq 3/2$ \cite[Theorem 1.1]{GKS25} with non-trivial non-negative $W$.
\end{enumerate}

\end{rmk}

For a {\em potential} $ W: \mathbb{N} \to \mathbb{R} $, we denote the corresponding Schr\"odinger operator 
$ \mathcal{H}_W $ acting on $ f\in \mathcal{F}_{\mathbb{N}}^{\sigma} $ as 
\begin{equation*}
	\mathcal{H}_W[f](n)=(-\Delta_{\mathbb{N}})^{\sigma} f(n)- {W(n)}f(n)\,,
\end{equation*}
and its associated energy functional as
\begin{equation}
\label{eq:quadratic}
	\mathcal{Q}_{W}(f,g):=\langle f,\mathcal{H}_W [g]\rangle_{\ell^2(\mathbb{N})}.
\end{equation}


Finally, we define a central object in our discussion, which are the sub/super solutions associated with Schr\"odinger operators.
\begin{definition}[({sub}/{super}) Solution]
A function $u \in \mathcal{F}_{\mathbb{N}}^{\sigma}$ is said to be a ({sub}/{super}) solution of the equation $\mathcal{H}_W[\varphi]=0$ in $\mathbb{N}$ if $\mathcal{H}_W[u]=0$ ($\mathcal{H}_W[u] {\leq}  0$ / $\mathcal{H}_W[u] {\geq}  0$) in $\mathbb{N}$.
\end{definition}

Agmon--Allegretto--Piepenbrink-type theorem relates the existence of positive supersolution to $\mathcal{H}_W[\varphi]=0$ in $\mathbb{N}$ with the positivity of the functional 
$$\mathcal{Q}_{W}(\varphi) := \langle  (-\Delta_{\mathbb{N}})^{\sigma}\varphi, \varphi  \rangle_{\ell^{2}(\mathbb{N})} - \langle W \varphi,\varphi \rangle_{{\ell^{2}(\mathbb{N})}} $$
on $C_{c}(\mathbb{N})$. For a reference on a general graph, see \cite[Theorem 4.2]{KPP17} and \cite[Theorem 2.3]{Florian}. We state it below for our particular case.
\begin{proposition}[Agmon--Allegretto--Piepenbrink-type theorem] \label{Prop:AAP}
Let $\sigma \in (0,1]$ and $W:\mathbb{N} \rightarrow \mathbb{R}$ be a potential. Then $\mathcal{Q}_{W}(\varphi)  \geq 0$ on $C_{c}(\mathbb{N})$ if and only if the equation $$\mathcal{H}_W[\varphi]:=((-\Delta_{\mathbb{N}})^{\sigma} -W)[\varphi]=0$$ admits a strictly positive supersolution on $\mathbb{N}$.
\end{proposition}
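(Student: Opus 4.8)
The plan is to obtain Proposition~\ref{Prop:AAP} as a direct consequence of the general Agmon--Allegretto--Piepenbrink theorem for Schr\"odinger operators on (not necessarily locally finite) graphs, namely \cite[Theorem 4.2]{KPP17} (or \cite[Theorem 2.3]{Florian}). The first step is to record, using Proposition~\ref{Prop:graphLap}, that for $\sigma\in(0,1]$ the operator $(-\Delta_{\mathbb N})^{\sigma}$ agrees on $\mathcal F_{\mathbb N}^{\sigma}$ with the graph Laplacian of $\{\mathbb N,-\widetilde{\mathcal K}^{\sigma},R^{\sigma}\}$: the edge weights $-\widetilde{\mathcal K}^{\sigma}$ are symmetric, vanish on the diagonal and are non-negative off it, they are row-summable for every fixed $n$ by the decay bound \eqref{Eq:Asmp_K} (here one uses $1+2\sigma>1$), and the potential $R^{\sigma}$ is finite and non-negative by \eqref{Graph-potential}. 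Hence $\mathcal H_{W}=(-\Delta_{\mathbb N})^{\sigma}-W$ is a bona fide Schr\"odinger operator on this graph, with (possibly sign-changing) potential $R^{\sigma}-W$, and $\mathcal Q_{W}$ is precisely its energy form on $C_c(\mathbb N)$. It then only remains to verify the running hypotheses of the general theorem; all of them are immediate from the above except \emph{connectedness} of the graph.

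Next I would check connectedness. For $\sigma\in(0,1)$ the proof of Proposition~\ref{Prop:graphLap} shows $c_{\sigma}>0$ and that $G$ is strictly decreasing, whence $\mathcal K^{\sigma}_{m,n}=c_{\sigma}\bigl(G(m+n)-G(m-n)\bigr)<0$ for every $m\neq n$; thus each pair of distinct vertices is joined by an edge and the graph is connected (in fact complete). For $\sigma=1$, \eqref{matrix-element} collapses to $-\widetilde{\mathcal K}^{1}_{m,n}=\delta_{|m-n|,1}$, so the graph is the standard path graph on $\mathbb N$, which is connected as well. With connectedness in hand, \cite[Theorem 4.2]{KPP17} applies and yields exactly the stated equivalence.

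For completeness I would also spell out the two implications in the present notation. The direction ``$\Leftarrow$'' is the discrete ground state representation: substituting $\varphi=u\psi$ with $\psi\in C_c(\mathbb N)$ and a strictly positive supersolution $u$ into \eqref{q_form}, and using the symmetry of $\mathcal K^{\sigma}$, a routine algebraic manipulation yields
\begin{equation*}
  \mathcal Q_{W}(\varphi)=-\frac12\sum_{n,m\in\mathbb N}\mathcal K^{\sigma}_{m,n}\,u_{n}u_{m}(\psi_{n}-\psi_{m})^{2}+\sum_{n\in\mathbb N}\mathcal H_{W}[u](n)\,u_{n}\psi_{n}^{2}\ \geq\ 0,
\end{equation*}
because $-\mathcal K^{\sigma}_{m,n}\geq 0$ for $m\neq n$, $u>0$, and $\mathcal H_{W}[u]\geq 0$; all sums converge absolutely since $\psi$ has finite support and $u\in\mathcal F_{\mathbb N}^{\sigma}$ (see Remark~\ref{Rmk:graphLap}(i)). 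The direction ``$\Rightarrow$'' is the usual exhaustion argument: one fixes a base point $n_{0}$ and an increasing sequence of finite sets $\Omega_{k}\uparrow\mathbb N$, solves the Dirichlet problem $\mathcal H_{W}u_{k}=\delta_{n_{0}}$ on $\Omega_{k}$ (with $u_{k}=0$ off $\Omega_{k}$), uses $\mathcal Q_{W}\geq 0$ together with the minimum principle to conclude $u_{k}>0$ on $\Omega_{k}$, normalizes $u_{k}(n_{0})=1$, invokes a Harnack-type inequality on the connected graph to obtain local uniform bounds, and extracts a pointwise limit $u>0$, which is a supersolution of $\mathcal H_{W}[\varphi]=0$. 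Both steps are carried out in full generality in \cite{KPP17}.

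The main point to be careful about is not a computation but the choice of the correct version of the theorem: for $\sigma\in(0,1)$ the graph $\{\mathbb N,-\widetilde{\mathcal K}^{\sigma},R^{\sigma}\}$ is \emph{not} locally finite (every vertex has infinitely many neighbours), so the elementary locally finite form of the AAP theorem does not apply and one must use the general statement, which only requires row-summability of the edge weights---exactly the hypothesis verified via \eqref{Eq:Asmp_K}. A secondary, purely technical point is to confirm that the test functions $\varphi=u\psi$ appearing in the ground state transform still lie in $\mathcal F_{\mathbb N}^{\sigma}$ and that the rearrangements of the double sum in \eqref{q_form} are legitimate; both follow from the finite support of $\psi$ together with $u\in\mathcal F_{\mathbb N}^{\sigma}$ and the estimate \eqref{Eq:Asmp_K}.
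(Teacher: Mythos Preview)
Your proposal is correct and follows exactly the approach the paper takes: the paper does not give an independent proof of Proposition~\ref{Prop:AAP} but simply states it as the specialization of the general graph AAP theorem \cite[Theorem 4.2]{KPP17} (and \cite[Theorem 2.3]{Florian}) to the graph $\{\mathbb N,-\widetilde{\mathcal K}^{\sigma},R^{\sigma}\}$ furnished by Proposition~\ref{Prop:graphLap}. Your write-up is in fact more thorough than the paper's treatment, as you explicitly verify the row-summability and connectedness hypotheses and spell out both implications, all of which are correct.
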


\subsection{Criticality theory for graph Laplacian.} \label{Sec:Criticality}

We have seen in the previous section that the fractional Laplacian $(-\Delta_{\mathbb{N}})^{\sigma}$ is a graph Laplacian associated with the graph $-\widetilde{\mathcal{K}}^{\sigma}$ over $\mathbb{N}$ with the non-negative potential term $R^{\sigma}$ as long as $-\widetilde{\mathcal{K}}^{\sigma} \geq 0$. It turns out that this is the case if $\sigma \in (0,1]$ (Proposition \ref{Prop:graphLap}). This allows us to take advantage of the well-developed criticality theory for graph Laplacian, see \cite{KPP17}. In this section, we recall some of the notions and results from criticality theory that are required for our purpose.


In view of Proposition \ref{Prop:AAP}, we define the property of possessing a strictly positive supersolution as the positivity of the underlying operator. We also give the definition of criticality and subcriticality.
\begin{definition}[Positivity of $\mathcal{H}_W$, Sub-criticality, Criticality] Let $\sigma \in (0,1]$ and $W:\mathbb{N} \rightarrow \mathbb{R}$ be a potential.

\begin{enumerate}[label=(\roman*)]
    \item We say that the Schr\"odinger operator $\mathcal{H}_W:=(-\Delta_{\mathbb{N}})^{\sigma} -W$ is positive and write $\mathcal{H}_W \geq 0$ in $\mathbb{N}$ if $\mathcal{H}_W[\varphi]=0$ admits a strictly positive supersolution on $\mathbb{N}$.
    \item $\mathcal{H}_W \geq 0$ is said to be {\it subcritical} if $\mathcal{H}_W - \widetilde{W}  \geq 0$ for some $ \widetilde{W} \gneq 0$ and in this case $\widetilde{W}$ is referred to as a Hardy-weight of $\mathcal{H}_W$.
    \item $\mathcal{H}_W \geq 0$ is called {\it critical} if it is not subcritical.
\end{enumerate}
\end{definition}

Next we recall a characterization of criticality which will be used to prove Theorem \ref{Theorem:Hardy}. See \cite[Theorem 5.3]{KPP17} for a reference.
\begin{proposition}[Criticality $\&$ Null-sequence] \label{Prop:char_cri}
Let $\sigma \in (0,1]$ and $W:\mathbb{N} \rightarrow \mathbb{R}$ be a potential. $\mathcal{H}_W \geq 0$ is critical if and only if it admits a {\it null-sequence}, i.e., $\exists (\phi_k)$ in $C_c(\mathbb{N})$ such that $\phi_k \geq 0$,  $\phi_k(n) \asymp 1$ for some fixed $n \in \mathbb{N}$, and $$\mathcal{Q}_W(\phi_k) \rightarrow 0\quad \text{ as } k \rightarrow \infty.
$$    \end{proposition}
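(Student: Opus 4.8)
The plan is to prove the two implications separately, working throughout under the standing hypothesis $\mathcal{H}_W\geq 0$. By Proposition \ref{Prop:AAP} this hypothesis supplies a strictly positive supersolution $u>0$, i.e. $\mathcal{H}_W[u](n)\geq 0$ for all $n$. The single tool I would lean on is the \emph{ground state representation}: for every $\varphi\in C_c(\mathbb{N})$,
$$\mathcal{Q}_W(\varphi)=\frac{1}{2}\sum_{n,m\in\mathbb{N}}b_{n,m}\,u_nu_m\left(\frac{\varphi_n}{u_n}-\frac{\varphi_m}{u_m}\right)^2+\sum_{n\in\mathbb{N}}\frac{\mathcal{H}_W[u](n)}{u_n}\,\varphi_n^2,$$
where $b_{n,m}=-\widetilde{\mathcal{K}}^\sigma_{m,n}$; this is obtained by expanding the graph-form \eqref{q_form} and using the symmetry $b_{n,m}=b_{m,n}$ (cf. \cite{KPP17}). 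Because $u>0$ and $\mathcal{H}_W[u]\geq 0$, the second sum is non-negative, so $\mathcal{Q}_W(\varphi)$ dominates the weighted Dirichlet energy of $\psi:=\varphi/u$. I would also record that the underlying graph is connected: for $\sigma\in(0,1)$ the strict monotonicity of $G$ in the proof of Proposition \ref{Prop:graphLap} gives $b_{n,m}>0$ for all $n\neq m$ (a complete graph), while for $\sigma=1$ it is the nearest-neighbour graph on $\mathbb{N}$; in either case any two vertices are joined by a finite path of edges of strictly positive weight.

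For the implication \emph{critical $\Rightarrow$ null-sequence}, I would exploit single-site potentials. Fix the reference point $k$ and, for $c>0$, consider the non-trivial non-negative weight $\widetilde{W}(m)=c\,\delta_{m,k}$. Since $\mathcal{H}_W$ is critical it is not subcritical, so $\widetilde{W}$ is not a Hardy-weight; hence there is $\varphi\in C_c(\mathbb{N})$ with $\mathcal{Q}_W(\varphi)<c\,\varphi(k)^2$. Positivity of $\mathcal{Q}_W$ forces $\varphi(k)\neq 0$, so after scaling we may take $\varphi(k)=1$ and then $0\le\mathcal{Q}_W(\varphi)<c$. Choosing $c=1/j$ produces a sequence $\varphi_j\in C_c(\mathbb{N})$ with $\varphi_j(k)=1$ and $\mathcal{Q}_W(\varphi_j)\to 0$. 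Finally I would replace $\varphi_j$ by $|\varphi_j|$: from the graph-form \eqref{q_form}, the pointwise bound $(|\varphi_m|-|\varphi_n|)^2\le(\varphi_m-\varphi_n)^2$ together with $|\varphi_n|^2=\varphi_n^2$ yields $0\le\mathcal{Q}_W(|\varphi_j|)\le\mathcal{Q}_W(\varphi_j)\to 0$, while $|\varphi_j|(k)=1$. Thus $(|\varphi_j|)$ is the desired null-sequence.

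For the converse, \emph{null-sequence $\Rightarrow$ critical}, I would argue by contradiction: assume $\mathcal{H}_W\ge 0$ admits a null-sequence $\phi_j$ (so $\phi_j\ge0$, $\phi_j(k)\asymp1$, $\mathcal{Q}_W(\phi_j)\to0$) yet is subcritical, with a Hardy-weight $\widetilde{W}\gneq0$ positive at some $k_0$. From $\mathcal{Q}_W(\phi_j)\ge\widetilde{W}(k_0)\phi_j(k_0)^2$ one gets $\phi_j(k_0)\to0$. The delicate point — and the main obstacle — is that the null-sequence is normalized at $k$ while the Hardy-weight is positive at a possibly different point $k_0$, so the vanishing must be propagated from $k_0$ to $k$. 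This is exactly where the ground state representation and the connectedness of the graph enter: writing $\psi_j=\phi_j/u$ and restricting the Dirichlet energy to the edges of a finite path $k_0=i_0,i_1,\dots,i_L=k$, each term $b_{i_l,i_{l+1}}u_{i_l}u_{i_{l+1}}\big(\psi_j(i_l)-\psi_j(i_{l+1})\big)^2\le\mathcal{Q}_W(\phi_j)\to0$ forces $\psi_j(i_l)-\psi_j(i_{l+1})\to0$, and telescoping gives $\psi_j(k)-\psi_j(k_0)\to0$. Since $\psi_j(k_0)=\phi_j(k_0)/u_{k_0}\to0$, we conclude $\phi_j(k)=u_k\psi_j(k)\to0$, contradicting $\phi_j(k)\asymp1$. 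Hence a null-sequence cannot coexist with subcriticality, which completes the equivalence.
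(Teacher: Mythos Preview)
The paper does not give its own proof of this proposition: it is stated as a recalled fact and attributed to \cite[Theorem~5.3]{KPP17}. Your argument is correct and is essentially the standard one found in that reference---the forward direction via single-site potentials $c\,\delta_{k,\cdot}$ together with the modulus trick, and the converse via the ground state representation combined with a path argument exploiting connectedness. One small remark: in the converse direction you could alternatively observe that subcriticality gives a Hardy-weight $\widetilde W\gneq 0$, and then by a standard perturbation (replacing $\widetilde W$ by $c\,\delta_{k,\cdot}$ for small $c>0$, which is dominated near $k$ by a suitable multiple of $\widetilde W$ after using the same connectedness/propagation idea on the level of the capacity) one may assume the Hardy-weight is supported at the normalization point $k$ itself, yielding the contradiction immediately; but your telescoping argument along a path is cleaner and avoids this detour.
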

\begin{rmk} \rm 
  It turns out that any null-sequence of $\mathcal{H}_W$ converges pointwise, and the pointwise limit of any two null-sequences are positive multiple of each other. We call the pointwise limit of a null-sequence of $\mathcal{H}_W$ an {\it Agmon ground state} of $\mathcal{H}_W$.  Up to scalar multiples, Agmon ground state is a unique strictly positive
 super-solution of $\mathcal{H}_W[\varphi]=0$ on $\mathbb{N}$,
 see \cite[Theorem 5.3]{KPP18}. Thus, we see that $\mathcal{H}_W$ is critical in $\mathbb{N}$ if and only if it admits an Agmon ground state.   We also remark that an Agmon ground state $u$ has {\it minimal growth at infinity} in $\mathbb{N}$, i.e., for any $v>0$ with $\mathcal{H}_W[v] \geq 0$ on $\mathbb{N} \setminus K$ for some finite subset $K$, we have $u \leq v$ on $K$ implies $u \leq v$ on $\mathbb{N}\setminus K$ \cite[Theorem 2.6]{Florian}.

On the other hand, $\mathcal{H}_W$ is sub-critical in $\mathbb{N}$ if and only if it admits a positive {\it minimal Green function}, i.e., $\mathcal{H}_W[\varphi]=\delta_{1,\cdot}$ on $\mathbb{N}$ admits a strictly positive solution which has minimal growth at infinity \cite[Theorem 2.5]{Florian}.
\end{rmk}

Now we define the optimality of a Hardy-weight as follows.
\begin{definition}[Optimal Hardy-weight]
 Let $\sigma \in (0,1]$.   A Hardy-weight $W$ of $(-\Delta)^{\sigma}$ is said to be an \it{optimal Hardy-weight} if the following conditions hold:
    \begin{enumerate}[label=(\roman*)]
        \item $\mathcal{H}_W:=(-\Delta)^{\sigma}-W$ is critical;
        \item $\Phi \notin \ell^2(\mathbb{N},W)$, where $\Phi$ is an Agmon ground state of $\mathcal{H}_W$.
    \end{enumerate}
\end{definition}
\begin{rmk} \rm\label{Rmk:Asympt-Optimal}
The second criterion in the above definition is referred to as {\it null-criticality of $\mathcal{H}_W$}. This condition ensures that the Hardy-weight $W$ is in fact `big' near infinity. In particular it implies that the weight is {\it optimal near infinity}, i.e, if for some $\lambda\geq0$
\begin{equation*}
    \langle (-\Delta)^\sigma f,f\rangle_{\ell^2(\mathbb{N)}}\geq (1+\lambda)\langle W f,f\rangle_{\ell^2(\mathbb{N)}}
\end{equation*}
for all $f\in C_c(\mathbb{N}\setminus K)$ for some compact $K \subseteq \mathbb{N}$, then $\lambda =0$. It is worth mentioning that this condition will ensure that the constant implicitly involved in the Hardy-weight $W_{\sigma}^{\operatorname{op}}$ is optimal.
    
\end{rmk}


\subsection{Liouville-type comparison principle.} \label{Sec:LC} The following Liouville-type comparison principle was proved in \cite[Theorem 2.1]{DKP24}, and we include here for the sake of completeness. The continuum analogue was proved earlier by Pinchover \cite{Pinchover_LC}. This principle provides us a way to conclude the criticality of a positive Schrödinger operator from that of another positive Schrödinger operator when they satisfy certain compatible conditions for comparison.
\begin{theorem}[Liouville-type comparison principle]\label{Thrm_Liouville}
    Suppose $\mathcal{H}_W$ and $\mathcal{H}_{W'}$ are positive
Schrödinger operators with potentials $W$, $W'$ over $\mathbb{N}$. Let $u,v \in\mathcal{F}_{\mathbb{N}}^{\sigma}$  be such that
\begin{enumerate}[label=(\alph*)]
\item  $\mathcal{H}_{W'}$ is critical and $v > 0$ is its Agmon ground state,
\item   $\mathcal{H}_{W}[u_+] \leq 0$ with $u_+\neq 0$,
\item $\exists C>0$ such that  $u_+(n)  \leq C v(n)$ for all $n \in \mathbb{N}$.
\end{enumerate}
Then, $\mathcal{H}_W$ is critical and $u > 0$ is an Agmon ground state of $\mathcal{H}_W$.
\end{theorem}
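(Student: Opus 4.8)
\emph{The plan.} The statement is exactly the discrete Liouville comparison principle of \cite[Theorem~2.1]{DKP24}, so I would prove it by producing an explicit null-sequence for $\mathcal{H}_W$ (invoking the characterization of criticality in Proposition~\ref{Prop:char_cri}) built out of the null-sequence of the critical operator $\mathcal{H}_{W'}$ and the subsolution $u_+$, and then identifying the pointwise limit of that null-sequence with $u_+$. The central tool is the discrete ground state representation (Picone-type identity) from criticality theory \cite{KPP17}: for $\phi\in C_c(\mathbb{N})$ supported where a function $w$ is positive, one has
\[
\mathcal{Q}_W(\phi)=\sum_{n\in\mathbb{N}}\frac{\mathcal{H}_W[w](n)}{w(n)}\,\phi(n)^2+\frac{1}{2}\sum_{m,n\in\mathbb{N}}\bigl(-\widetilde{\mathcal{K}}^\sigma_{m,n}\bigr)\,w(m)w(n)\left(\frac{\phi(m)}{w(m)}-\frac{\phi(n)}{w(n)}\right)^{2}.
\]
Applied to $\mathcal{H}_{W'}$ with $w=v$, the potential term vanishes because $\mathcal{H}_{W'}[v]=0$, reducing $\mathcal{Q}_{W'}$ to its pure energy form.

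First I would take, by hypothesis (a) and Proposition~\ref{Prop:char_cri}, a null-sequence $(\eta_k)\subset C_c(\mathbb{N})$ for $\mathcal{H}_{W'}$ with $\eta_k\geq 0$, $\eta_k\to v$ pointwise, and $\mathcal{Q}_{W'}(\eta_k)\to 0$. I then set $\phi_k:=u_+\,\eta_k/v\in C_c(\mathbb{N})$; these are nonnegative, supported in $\{u_+>0\}$, converge pointwise to $u_+$, and satisfy $\phi_k(n_0)\asymp 1$ (eventually) at any fixed $n_0$ with $u_+(n_0)>0$. Applying the representation above to $\mathcal{H}_W$ with $w=u_+$ splits $\mathcal{Q}_W(\phi_k)$ into a potential term $\sum_n \mathcal{H}_W[u_+](n)\,\phi_k(n)^2/u_+(n)$, which is $\leq 0$ since $u_+$ is a subsolution by hypothesis (b) and $\phi_k^2/u_+\geq 0$, and an energy term in which $\phi_k/u_+=\eta_k/v$ on the support. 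Using $u_+\leq Cv$ from hypothesis (c) to bound $u_+(m)u_+(n)\leq C^2 v(m)v(n)$, and that the energy sum runs over a subset of the edges, this energy term is dominated by $C^2\mathcal{Q}_{W'}(\eta_k)$ via the ground state representation for $v$. Combined with the positivity $\mathcal{H}_W\geq 0$ this yields $0\leq \mathcal{Q}_W(\phi_k)\leq C^2\mathcal{Q}_{W'}(\eta_k)\to 0$, so $(\phi_k)$ is a null-sequence and $\mathcal{H}_W$ is critical.

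Finally, the remark following Proposition~\ref{Prop:char_cri} identifies the pointwise limit of any null-sequence with the Agmon ground state, which is unique up to a positive scalar and strictly positive. Since $\phi_k\to u_+$ pointwise, $u_+$ is therefore an Agmon ground state of $\mathcal{H}_W$; in particular $u_+>0$ on all of $\mathbb{N}$, which forces $u=u_+>0$ and completes the proof.

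\emph{Main obstacle.} The delicate point is the legitimate use of the ground state representation with $w=u_+$, since $u_+$ is only a nonnegative subsolution, not a strictly positive function. One must restrict to test functions supported in $\{u_+>0\}$ and check that every term stays well defined, and in particular that the boundary edges joining $\{u_+>0\}$ to $\{u_+=0\}$ are correctly absorbed into the potential term $\mathcal{H}_W[u_+]$ (where the edge weights $-\widetilde{\mathcal{K}}^\sigma_{m,n}\geq 0$ keep the signs under control). A secondary technical point is choosing the null-sequence $(\eta_k)$ so that $\eta_k\to v$ pointwise and $\mathcal{Q}_{W'}(\eta_k)\to 0$ hold simultaneously, so that the substitution $\phi_k/u_+=\eta_k/v$ is valid exactly on $\operatorname{supp}\phi_k$.
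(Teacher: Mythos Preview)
The paper does not prove Theorem~\ref{Thrm_Liouville}; it is quoted from \cite[Theorem~2.1]{DKP24} and stated without proof ``for the sake of completeness.'' Your outline---build the null-sequence $\phi_k=u_+\eta_k/v$ from a null-sequence $(\eta_k)$ of the critical operator $\mathcal{H}_{W'}$, apply the ground state representation with $w=u_+$, discard the nonpositive potential term via hypothesis~(b), and dominate the remaining energy by $C^2\mathcal{Q}_{W'}(\eta_k)$ via hypothesis~(c)---is the standard argument and is precisely the strategy of \cite{DKP24}. Your identification of the main obstacle (legitimizing the Picone identity when $u_+$ may vanish, and handling edges crossing $\partial\{u_+>0\}$) is also accurate; in \cite{DKP24} this is dealt with by working on the induced subgraph $\{u_+>0\}$ and checking that the boundary edge contributions have the correct sign because $-\widetilde{\mathcal{K}}^\sigma_{m,n}\geq 0$.
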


\section[Optimal Hardy-weight]{Optimal Hardy-weight for $(-\Delta_{\mathbb{N}})^\sigma$: Proof of Theorem \ref{Theorem:Hardy}}  \label{Optimal Hardy-weight}   
In this section, we give a proof of Theorem \ref{Theorem:Hardy}. The proof has three major steps:
\begin{enumerate}
    \item Construct a positive supersolution of $(-\Delta_{\mathbb{N}})^\sigma[\varphi]=0$ in $\mathbb{N}$ for $\sigma \in (0,1]$ and use it to apply a ground state transformation of the quadratic form associated with $(-\Delta_{\mathbb{N}})^\sigma$, which readily gives a family of Hardy-weights.
    \item From the family of Hardy-weights obtained in Step-1, find the ones that are critical.
    \item Among the critical Hardy-weights obtained in Step-2, find those which are also null-critical.
\end{enumerate}

\subsection{Supersolution construction.}
First we look for a supersolution of the equation $(-\Delta_{\mathbb{N}})^\sigma[\varphi]=0$ in $\mathbb{N}$. For this purpose, we consider the Riesz potential defined as
\begin{align} \label{Eq:Riesz}
    \mathcal{I}_\alpha (n) :=\langle \mathcal{U} e_n,[2^{-\alpha}[1-\cdot]^{-\alpha}]\rangle_{L^2((-1,1),\sqrt{1-x^2}\mathrm{d} x)} \,, \ \  \  \ \alpha \in {(0,\frac{3}{2})} \,.
\end{align}
Note that $\mathcal{I}_\alpha$ is finite for each $n$ as $\alpha <\frac{3}{2}$ and it is a strictly positive function. Observe that 
\begin{align}\label{riesz}
\begin{split}
    \mathcal{I}_\alpha (n)&=\langle \mathcal{U} e_n,[2^{-\alpha}[1-\cdot]^{-\alpha}]\rangle_{L^2((-1,1),\sqrt{1-x^2}\mathrm{d} x)}\\ 
    &=\sqrt{\frac{2}{\pi}}\int_{-1}^1 \frac{U_{n-1}(x)}{2^{\alpha}(1-x)^{\alpha}}\sqrt{1-x^2}\ \mathrm{d}x  \\ &= \sqrt{\frac{2}{\pi}} {\frac{\pi}{2^{\alpha}2^{-\alpha+1}}}(-1)^{n+1}\Gamma(1-2\alpha)\left[\frac{1}{\Gamma(n-\alpha)\Gamma(2-\alpha-n)}-\frac{1}{\Gamma(-n-\alpha)\Gamma(2-\alpha+n)}\right] \\ &=-\sqrt{{\frac{\pi}{2}}}\left[\frac{\Gamma(1/2-\alpha)4^{-\alpha}\Gamma(\alpha-n-1)}{\Gamma(-n-\alpha)\sqrt{\pi}\Gamma(\alpha)}-\frac{\Gamma(1/2-\alpha)4^{-\alpha}\Gamma(\alpha+n-1)}{\Gamma(n-\alpha)\sqrt{\pi}\Gamma(\alpha)}\right]\\&=-\frac{1}{4^\alpha{\sqrt{2}}}\frac{\Gamma(1/2-\alpha)}{\Gamma(\alpha)}\left[\frac{\Gamma(\alpha-n-1)}{\Gamma(-n-\alpha)}-\frac{\Gamma(\alpha+n-1)}{\Gamma(n-\alpha)}\right]\\
    &=-\frac{1}{4^\alpha{\sqrt{2}}}\frac{\Gamma(1/2-\alpha)}{\Gamma(\alpha)}\frac{\Gamma(n+\alpha-1)}{\Gamma(n-\alpha+2)}2n(2\alpha-1)=\frac{1}{4^\alpha{\sqrt{2}}}\frac{\Gamma(3/2-\alpha)}{\Gamma(\alpha)}\frac{\Gamma(n+\alpha-1)}{\Gamma(n-\alpha+2)}4n >0 
    \end{split}
\end{align}
for all $\alpha \in (0,\frac{3}{2})$, where we used  \cite[Lemma A.1]{GKS25}, \cite[(3.1)]{LR18} and Lemma \ref{simpriesz} in the second, third and second to last equality, respectively.

\begin{rmk} \label{Eq:Asymp_Riesz} \rm
By \eqref{eq:asympga} we get the following 
asymptotic behavior of $\mathcal{I}_\alpha$ as $n \rightarrow \infty$:
\begin{align} \label{Eq:Asmp_I}
{  \mathcal{I}_\alpha (n)  {\sim} \frac{2{\sqrt{2}}}{4^\alpha}\frac{\Gamma(3/2-\alpha)}{\Gamma(\alpha)}n^{2\alpha-2} \,. }
\end{align}
For $\alpha \in (0,1]$, $\mathcal{I}_{\alpha}$ is in fact a positive  minimal Green function (up to a multiplicative constant) of $(-\Delta_{\mathbb{N}})^{\alpha}$ as it is shown in Section \ref{Sec:UCP}.
\end{rmk}

The following proposition plays a fundamental role in the supersolution construction which is the key for proving our main result. Note that this is the version of \cite[Proposition 2]{KN23} in the half-line $\mathbb{N}$. In \cite[Proposition 2]{KN23}, the authors approximate their corresponding Riesz potential by certain square summable functions. These approximating functions are positive which enables the authors to use monotone convergence theorem to justify a crucial limiting argument in their proof. In our case, such approximating functions of $\mathcal{I}_{\alpha}$ change their sign which causes a difficulty in the limiting argument. We overcome this by analyzing the asymptotic behavior of $\mathcal{I}_{\alpha}$.    
\begin{proposition} \label{Prop:Imp}
 Let $\sigma, \alpha \in (0,3/2)$ and $\sigma < \alpha < 1+\sigma$. Then
 $$(-\Delta_{\mathbb{N}})^{\sigma} \mathcal{I}_{\alpha}=\mathcal{I}_{\alpha-\sigma} \,,$$
 where $\mathcal{I}_{\alpha}$ is given by \eqref{Eq:Riesz}.
 In particular, $\mathcal{I}_{\alpha}$ is a strictly positive supersolution of $(-\Delta_{\mathbb{N}})^{\sigma} \mathcal{I}_{\alpha}=0 $ in $\mathbb{N}$.
\end{proposition}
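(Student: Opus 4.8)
The plan is to prove the identity $(-\Delta_{\mathbb{N}})^{\sigma}\mathcal{I}_{\alpha}=\mathcal{I}_{\alpha-\sigma}$ by working on the spectral side via the unitary $\mathcal{U}$, and then transferring back to $\mathbb{N}$. Formally, since $(-\Delta_{\mathbb{N}})^{\sigma}=\mathcal{U}^{-1}M_{2^{\sigma}(1-x)^{\sigma}}\mathcal{U}$ by \eqref{eq:def_frac} and, by \eqref{Eq:Riesz}, $\mathcal{I}_{\alpha}(n)=\langle \mathcal{U}e_n, 2^{-\alpha}(1-\cdot)^{-\alpha}\rangle$, one would like to write
\begin{align*}
    (-\Delta_{\mathbb{N}})^{\sigma}\mathcal{I}_{\alpha}(n) &= \langle e_n, (-\Delta_{\mathbb{N}})^{\sigma}\mathcal{I}_{\alpha}\rangle_{\ell^2(\mathbb{N})}
    = \langle \mathcal{U}e_n, M_{2^{\sigma}(1-x)^{\sigma}}\,\mathcal{U}\mathcal{I}_{\alpha}\rangle \\
    &= \langle \mathcal{U}e_n,\, 2^{\sigma}(1-x)^{\sigma}\cdot 2^{-\alpha}(1-x)^{-\alpha}\rangle
    = \langle \mathcal{U}e_n,\, 2^{-(\alpha-\sigma)}(1-x)^{-(\alpha-\sigma)}\rangle = \mathcal{I}_{\alpha-\sigma}(n),
\end{align*}
and the hypothesis $\alpha-\sigma<3/2$ makes the right-hand side well-defined, while $\alpha-\sigma>-1/2$ (which should be the precise role of the lower constraint $\alpha<1+\sigma$, matching the integrability of $(1-x)^{\sigma-\alpha}\sqrt{1-x^2}$ near $x=1$) keeps the pairing finite. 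The conclusion that $\mathcal{I}_{\alpha}$ is a positive supersolution of $(-\Delta_{\mathbb{N}})^{\sigma}\varphi=0$ is then immediate: $\mathcal{I}_{\alpha}>0$ by \eqref{riesz}, and $\mathcal{I}_{\alpha-\sigma}>0$ by the same computation (valid since $0<\alpha-\sigma$), so $(-\Delta_{\mathbb{N}})^{\sigma}\mathcal{I}_{\alpha}=\mathcal{I}_{\alpha-\sigma}\geq 0$.

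The difficulty, and the reason this cannot be taken for granted, is that $\mathcal{I}_{\alpha}\notin\ell^2(\mathbb{N})$ (by \eqref{Eq:Asmp_I} it grows like $n^{2\alpha-2}$, which fails square-summability once $\alpha\geq 3/4$), so $\mathcal{U}\mathcal{I}_{\alpha}$ is not literally a well-defined $L^2$ element and the manipulation above is only formal. As the remark before the proposition stresses, the standard fix of approximating $\mathcal{I}_{\alpha}$ by positive $\ell^2$ functions and passing to the limit by monotone convergence, as in \cite[Proposition 2]{KN23}, is unavailable here because the natural approximants (truncations of the spectral representation, i.e. $\mathcal{I}_{\alpha}^{(N)}(n):=\sqrt{2/\pi}\int_{-1}^{1}U_{n-1}(x)2^{-\alpha}(1-x)^{-\alpha}\mathbf{1}_{[-1,1-1/N]}(x)\sqrt{1-x^2}\,\mathrm{d}x$, or partial sums of Chebyshev expansions) are sign-changing.

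Hence the key steps I would carry out are: (i) fix $n\in\mathbb{N}$ and write $(-\Delta_{\mathbb{N}})^{\sigma}\mathcal{I}_{\alpha}(n)=\sum_{m\in\mathbb{N}}\mathcal{K}^{\sigma}_{m,n}\mathcal{I}_{\alpha}(m)$ using \eqref{Eq:1}, checking absolute convergence of this series from $|\mathcal{K}^{\sigma}_{m,n}|\lesssim|m-n|^{-1-2\sigma}$ in \eqref{Eq:Asmp_K} together with $\mathcal{I}_{\alpha}(m)\asymp m^{2\alpha-2}$ in \eqref{Eq:Asmp_I} — this is exactly where $\alpha<1+\sigma$ is needed, since the tail behaves like $\sum_m m^{2\alpha-2}m^{-1-2\sigma}=\sum_m m^{2(\alpha-\sigma)-3}$, summable iff $\alpha-\sigma<1$; (ii) introduce the truncations $\mathcal{I}_{\alpha}^{(N)}$ above, for which $\mathcal{I}_{\alpha}^{(N)}\in\ell^2(\mathbb{N})$ so that $(-\Delta_{\mathbb{N}})^{\sigma}\mathcal{I}_{\alpha}^{(N)}=\mathcal{U}^{-1}M_{2^{\sigma}(1-x)^{\sigma}}\mathcal{U}\mathcal{I}_{\alpha}^{(N)}$ holds rigorously and its $n$-th entry equals $\sqrt{2/\pi}\int_{-1}^{1}U_{n-1}(x)2^{\sigma-\alpha}(1-x)^{\sigma-\alpha}\mathbf{1}_{[-1,1-1/N]}\sqrt{1-x^2}\,\mathrm{d}x=:\mathcal{I}_{\alpha-\sigma}^{(N)}(n)$; (iii) show $\mathcal{I}_{\alpha-\sigma}^{(N)}(n)\to\mathcal{I}_{\alpha-\sigma}(n)$ as $N\to\infty$ by dominated convergence in $x$ (legitimate since $\alpha-\sigma<3/2$ makes $(1-x)^{\sigma-\alpha}\sqrt{1-x^2}$ integrable), and similarly on the spatial side show $\sum_m\mathcal{K}^{\sigma}_{m,n}\mathcal{I}_{\alpha}^{(N)}(m)\to\sum_m\mathcal{K}^{\sigma}_{m,n}\mathcal{I}_{\alpha}(m)$; for this last convergence, rather than monotone convergence I would use the uniform bound $|\mathcal{I}_{\alpha}^{(N)}(m)|\lesssim m^{2\alpha-2}$ (uniform in $N$, obtained from the explicit truncated integral together with the asymptotics, precisely the "analyzing the asymptotic behavior of $\mathcal{I}_{\alpha}$" mentioned in the remark) and apply dominated convergence against the summable majorant $|\mathcal{K}^{\sigma}_{m,n}|\,m^{2\alpha-2}$. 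Matching the two limits yields $\sum_m\mathcal{K}^{\sigma}_{m,n}\mathcal{I}_{\alpha}(m)=\mathcal{I}_{\alpha-\sigma}(n)$, i.e. the claimed identity; the positivity statement then follows as noted. The main obstacle, as flagged in the text, is step (iii) on the spatial side: replacing the unavailable monotone-convergence argument of \cite{KN23} with a dominated-convergence argument powered by a $N$-uniform asymptotic bound on the sign-changing truncations $\mathcal{I}_{\alpha}^{(N)}$.
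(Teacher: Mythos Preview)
Your overall strategy matches the paper's: regularize $\mathcal{I}_\alpha$ to an $\ell^2$ sequence, apply the spectral representation \eqref{eq:def_frac} to compute $(-\Delta_\mathbb{N})^\sigma$ exactly on the regularization, and then pass to the limit on both the spectral and the spatial side via dominated convergence. The paper regularizes by a resolvent shift $\mathcal{I}_\alpha^\varepsilon(n)=\langle\mathcal{U}e_n,\,2^{-\alpha}(1+\varepsilon-\cdot)^{-\alpha}\rangle$ rather than your hard cutoff $\mathbf{1}_{[-1,1-1/N]}$, but this difference is cosmetic: either choice lands in $\ell^2(\mathbb{N})$ and yields the correct limit on the spectral side by dominated convergence in $x$.

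The genuine gap is your step~(iii) on the spatial side. You assert $|\mathcal{I}_\alpha^{(N)}(m)|\lesssim m^{2\alpha-2}$ uniformly in $N$, crediting this to ``the explicit truncated integral together with the asymptotics,'' but there is no explicit closed formula for the truncated integral, and \eqref{Eq:Asmp_I} controls $\mathcal{I}_\alpha$ itself, not the tail $\mathcal{I}_\alpha-\mathcal{I}_\alpha^{(N)}$. Establishing such a uniform bound would require an oscillatory-integral argument you have not supplied. The paper does \emph{not} prove the endpoint bound $m^{2\alpha-2}$; instead it uses a Cauchy--Schwarz split with an auxiliary exponent $\beta\in(2\alpha-\tfrac{3}{2},\,\tfrac{1}{2}+2\sigma)\cap(0,\tfrac{3}{2})$ (non-empty since $\alpha<1+\sigma$),
\[
|\mathcal{I}_\alpha^\varepsilon(m)|\;\leq\; C_{\alpha,\beta}\left(\int_{-1}^{1}\frac{U_{m-1}^2}{(1-x)^\beta}\sqrt{1-x^2}\,\mathrm{d}x\right)^{1/2}=:C_{\alpha,\beta}\sqrt{J_\beta(m)}\,,
\]
and then invokes the refined estimate of $J_\beta(m)$ from \cite[Lemma~A.2]{GKS25} (recorded in \eqref{Eq:asymp}) to obtain $\sqrt{J_\beta(m)}\lesssim m^{\beta-1/2}$, which is summable against $|\mathcal{K}^\sigma_{m,n}|\lesssim|m-n|^{-1-2\sigma}$ precisely because $\beta<\tfrac{1}{2}+2\sigma$. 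This is the actual content behind the remark about ``analyzing the asymptotic behavior.'' The same Cauchy--Schwarz argument works verbatim for your hard truncation (simply bound $\mathbf{1}_{[-1,1-1/N]}(1-x)^{-\alpha}\leq(1-x)^{-\alpha}$), so your approach can be completed --- but the missing ingredient is this Cauchy--Schwarz step together with the $J_\beta$ asymptotics, not a direct uniform bound by $m^{2\alpha-2}$.
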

\begin{proof}
For $\varepsilon>0$, we consider
$$\mathcal{I}_\alpha^{\varepsilon} (n) =\langle \mathcal{U} e_n,[2^{-\alpha}[1+\varepsilon-\cdot]^{-\alpha}]\rangle_{L^2((-1,1),\sqrt{1-x^2}\mathrm{d} x)}  \,.$$
Since the function $[1+\varepsilon-\cdot]^{-\alpha} \in L^2((-1,1),\sqrt{1-x^2}\mathrm{d} x)$, we infer that $\mathcal{U} \mathcal{I}_\alpha^{\varepsilon} = 2^{-\alpha}[1+\varepsilon-\cdot]^{-\alpha}.$ Moreover, if $\alpha < \frac{3}{2}$, an application of dominated convergence theorem implies that $\mathcal{I}_\alpha^{\varepsilon} \rightarrow \mathcal{I}_\alpha$ pointwise as $\varepsilon \rightarrow 0$. Now, we see that
  \begin{align}
   (-\Delta_{\mathbb{N}})^{\sigma} \mathcal{I}_{\alpha}^{\varepsilon}(n) & = \langle  e_n, (-\Delta_{\mathbb{N}})^{\sigma} \mathcal{I}_{\alpha}^{\varepsilon} \rangle_{\ell^2(\mathbb{N})} =    \langle  e_n, [\mathcal{U}^{-1} M_{2^{\sigma}(1-\cdot)^{\sigma}} \mathcal{U}] \mathcal{I}_{\alpha}^{\varepsilon} \rangle_{\ell^2(\mathbb{N})} \nonumber \\
   & = \langle   \mathcal{U} e_n, M_{2^{\sigma}(1-\cdot)^{\sigma}} \mathcal{U} \mathcal{I}_{\alpha}^{\varepsilon} \rangle_{L^2((-1,1),\sqrt{1-x^2})} \nonumber \\
   & = \langle   \mathcal{U} e_n, \frac{2^{\sigma}(1-\cdot)^{\sigma}}{ 2^{\alpha}(1+\varepsilon-\cdot)^{\alpha}} \rangle_{L^2((-1,1),\sqrt{1-x^2})} \nonumber \\
   &  \rightarrow \mathcal{I}_{\alpha-\sigma}(n) \label{lim1}
  \end{align}
  as $\varepsilon \rightarrow 0$, if $\alpha-\sigma<\frac{3}{2}$. On the other hand, 
  \begin{align} \label{Eqn:DCT0}
      (-\Delta_{\mathbb{N}})^{\sigma} \mathcal{I}_{\alpha}^{\varepsilon}(n) = \sum_{m \in \mathbb{N}} {\mathcal{K}^{\sigma}_{m,n} }\mathcal{I}_{\alpha}^{\varepsilon}(m) \,.
  \end{align}
  Recall that $\mathcal{I}_{\alpha}^{\varepsilon} \rightarrow \mathcal{I}_{\alpha}$ pointwise as $\varepsilon \rightarrow 0$. Furthermore, choosing $\beta \in (2\alpha-3/2,1/2+2\sigma)\cap (0,3/2)$  (which is non-empty as $\alpha<1+\sigma$), we use the Cauchy-Schwarz inequality to obtain
\begin{align*}
|\mathcal{I}_{\alpha}^{\varepsilon} (m)| & \leq  2^{-\alpha} \sqrt{\frac{2}{\pi}} \int_{-1}^1 \frac{|U_{m-1}|}{(1+\varepsilon-x)^{\alpha}} \sqrt{1-x^2} dx \\
& \leq 2^{-\alpha} \sqrt{\frac{2}{\pi}} \left[\int_{-1}^1 \frac{U_{m-1}^2}{(1+\varepsilon-x)^{\beta}} \sqrt{1-x^2} dx \right]^{1/2} \left[\int_{-1}^1 \frac{\sqrt{1-x^2}}{(1+\varepsilon-x)^{2\alpha-\beta}}  dx \right]^{1/2} \\
& \leq 2^{-\alpha} \sqrt{\frac{2}{\pi}} \left[\int_{-1}^1 \frac{U_{m-1}^2}{(1-x)^{\beta}} \sqrt{1-x^2} dx \right]^{1/2} \left[\int_{-1}^1 \frac{\sqrt{1-x^2}}{(1-x)^{2\alpha-\beta}}  dx \right]^{1/2} \\
& := C_{\alpha,\beta} \sqrt{J_{\beta}(m)} \,,
\end{align*} 
where $C_{\alpha,\beta}$ is a finite positive constant (as the second integral in the last inequality is finite for $\beta>2\alpha-3/2$) depending only on $\alpha,\beta$ and does not depend on $m$, and 
$$J_{\beta}(m):=\int_{-1}^1 \frac{U_{m-1}^2}{(1-x)^{\beta}} \sqrt{1-x^2} dx \,.$$
It has been shown in \cite[Lemma A.2]{GKS25} that 
$$J_{\beta}(m)=2^{\beta-2} \frac{\Gamma(\beta)^2}{\Gamma(2\beta)} j_{\beta}(m) \,,$$
where \begin{align} \label{Eq:asymp}
    j_{\beta}(m):=\left( 1-\frac{(\beta)_{2m}}{(1-\beta)_{2m}} \tan(\pi \beta)\right) = \begin{cases}
    O(1) \ \ \mbox{if} \ \ \beta \in (0,\frac{1}{2}) \,, \\
    O(\log m) \ \ \mbox{if} \ \ \beta =\frac{1}{2} \,, \\
    O(m^{2\beta-1}) \ \ \mbox{if} \ \ \beta \in (\frac{1}{2},\frac{3}{2}) \,, 
\end{cases} 
\end{align}
and $(\cdot)_{2m}$ is the Pochhammer symbol.
This estimate together with the estimate \eqref{Eq:Asmp_K} of $\mathcal{K}_{\cdot, n}^{\sigma}$, we conclude that for each $n \in \mathbb{N}$, the function $\mathcal{K}^{\sigma}_{\cdot,n} \mathcal{I}_{\alpha}^{\varepsilon}(\cdot)$ is dominated outside a finite set by the integrable function
$$ \frac{\sqrt{J_{\beta}(\cdot)}}{|\cdot-n|^{1+2\sigma}} \,.$$
Thus,  using the dominated convergence theorem in \eqref{Eqn:DCT0}, we get
   \begin{align} \label{lim2}
      (-\Delta_{\mathbb{N}})^{\sigma} \mathcal{I}_{\alpha}^{\varepsilon}(n) = \sum_{m \in \mathbb{N}} {\mathcal{K}^{\sigma}_{m,n} }\mathcal{I}_{\alpha}^{\varepsilon}(m) {\rightarrow} \sum_{m \in \mathbb{N}} {\mathcal{K}^{\sigma}_{m,n} }\mathcal{I}_{\alpha}(m) = (-\Delta_{\mathbb{N}})^{\sigma} \mathcal{I}_{\alpha}(n) \,,
  \end{align}
  as $\varepsilon \rightarrow 0$. Thus, it follows from \eqref{lim1} and \eqref{lim2} that $(-\Delta_{\mathbb{N}})^{\sigma} \mathcal{I}_{\alpha}=\mathcal{I}_{\alpha-\sigma} \,.$ This proves the first part of the proposition. The second conclusion is immediate from the fact that both $\mathcal{I}_{\alpha},\mathcal{I}_{\alpha-\sigma}$ are strictly positive as shown in \eqref{riesz}.
\end{proof}

We consider the following function, for $0<\alpha, \sigma<3/2$,
$$W_{\alpha,\sigma}:=\frac{\mathcal{I}_{\alpha-\sigma}}{\mathcal{I}_{\alpha}} \,, \ \ \ 0<\alpha-\sigma<\frac{3}{2} \,.$$
Clearly, from \eqref{riesz}, it can be seen that $W_{\alpha,\sigma}>0$ for any $0<\alpha, \sigma<3/2$ with $0<\alpha-\sigma<\frac{3}{2}$. The forthcoming proposition shows that $W_{\alpha,\sigma}$ is in fact a Hardy-weight for $(-\Delta_{\mathbb{N}})^{\sigma}$ when $\sigma \in (0,1]$, for certain values of $\alpha$.
\begin{proposition}[Construction of Hardy-weights] \label{Prop:HW}
    Let $\sigma \in (0,1]$, $\alpha \in (0,3/2)$ and $\sigma<\alpha<1+\sigma$. Then $W_{\alpha,\sigma}$ is a  Hardy-weight for $(-\Delta_{\mathbb{N}})^{\sigma}$.
\end{proposition}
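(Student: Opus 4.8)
The plan is to combine the Agmon--Allegretto--Piepenbrink-type theorem (Proposition~\ref{Prop:AAP}) with the supersolution produced in Proposition~\ref{Prop:Imp}. Since $\sigma\in(0,1]$ and $\sigma<\alpha<1+\sigma$ (so that $0<\alpha-\sigma<1\le\alpha<3/2$ and all Riesz potentials below are well defined and strictly positive by \eqref{riesz}), Proposition~\ref{Prop:Imp} gives $(-\Delta_{\mathbb{N}})^{\sigma}\mathcal{I}_{\alpha}=\mathcal{I}_{\alpha-\sigma}$ on $\mathbb{N}$. Dividing by $\mathcal{I}_\alpha>0$, we see that $\mathcal{I}_\alpha$ is a strictly positive solution of
\begin{equation*}
    \big[(-\Delta_{\mathbb{N}})^{\sigma}-W_{\alpha,\sigma}\big]\varphi=0 \quad\text{on }\mathbb{N},
\end{equation*}
where $W_{\alpha,\sigma}=\mathcal{I}_{\alpha-\sigma}/\mathcal{I}_\alpha$. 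In particular $\mathcal{I}_\alpha$ is a strictly positive supersolution of $\mathcal{H}_{W_{\alpha,\sigma}}[\varphi]=0$.

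Next I would check that $W_{\alpha,\sigma}$ is an admissible potential in the sense required for Proposition~\ref{Prop:AAP}, i.e.\ that $\mathcal{I}_\alpha\in\mathcal{F}_{\mathbb{N}}^{\sigma}$ and $W_{\alpha,\sigma}$ is a real-valued function on $\mathbb{N}$; the latter is immediate from \eqref{riesz}, and $\mathcal{I}_\alpha\in\mathcal{F}_{\mathbb{N}}^{\sigma}$ is exactly what makes the computation in Proposition~\ref{Prop:Imp} legitimate. Then Proposition~\ref{Prop:AAP} yields $\mathcal{Q}_{W_{\alpha,\sigma}}(\varphi)\ge 0$ for all $\varphi\in C_c(\mathbb{N})$; equivalently, using the notation of \eqref{eq:quadratic},
\begin{equation*}
    \langle(-\Delta_{\mathbb{N}})^{\sigma}\varphi,\varphi\rangle_{\ell^2(\mathbb{N})}\ \ge\ \langle W_{\alpha,\sigma}\varphi,\varphi\rangle_{\ell^2(\mathbb{N})}, \qquad \varphi\in C_c(\mathbb{N}).
\end{equation*}
To conclude that $W_{\alpha,\sigma}$ is a \emph{Hardy-weight} for $(-\Delta_{\mathbb{N}})^{\sigma}$ it remains, per the definition of subcriticality/Hardy-weight in Section~\ref{Sec:Criticality}, to note that $W_{\alpha,\sigma}\gneq 0$; this is clear since $W_{\alpha,\sigma}>0$ pointwise by \eqref{riesz}. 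Equivalently, one may observe that a ground state transformation with $u=\mathcal{I}_\alpha$ rewrites the form $\mathcal{Q}_{W_{\alpha,\sigma}}$ as a manifestly nonnegative weighted Dirichlet-type form (the ``ground state representation'' alluded to in the keywords), which gives the inequality directly.

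I do not expect any serious obstacle here: the proposition is essentially a packaging of Proposition~\ref{Prop:Imp} together with the already-quoted AAP theorem, and the only thing to be careful about is the range of $\alpha$—one must keep $\alpha-\sigma>0$ and $\alpha<3/2$ so that both $\mathcal{I}_{\alpha}$ and $\mathcal{I}_{\alpha-\sigma}$ are defined and strictly positive, which is guaranteed by the hypothesis $\sigma<\alpha<1+\sigma$ together with $\sigma\le 1$. (The genuine work—deciding \emph{which} of these Hardy-weights are critical and null-critical—is deferred to the subsequent steps of the proof of Theorem~\ref{Theorem:Hardy}.)
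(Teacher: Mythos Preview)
Your proposal is correct and matches the paper's own proof essentially verbatim: apply Proposition~\ref{Prop:Imp} to see that $\mathcal{I}_\alpha$ is a strictly positive solution of $\big[(-\Delta_{\mathbb{N}})^{\sigma}-W_{\alpha,\sigma}\big]\varphi=0$, then invoke Proposition~\ref{Prop:AAP}. (One tiny slip: the chain ``$0<\alpha-\sigma<1\le\alpha<3/2$'' is not right in general---e.g.\ $\sigma=1/2$, $\alpha=3/4$---but what you actually need, namely $0<\alpha-\sigma<3/2$ and $0<\alpha<3/2$, does follow from the hypotheses.)
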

\begin{proof}
 It is an immediate consequence of Proposition \ref{Prop:AAP}. Indeed, one can see that $\mathcal{I}_{\alpha}$ is a positive solution of 
 $$\left((-\Delta_{\mathbb{N}})^{\sigma}-W_{\alpha,\sigma}\right)[\varphi]=0 \,,$$
 which by Proposition \ref{Prop:AAP} implies $(-\Delta_{\mathbb{N}})^{\sigma}-W_{\alpha,\sigma} \geq 0$.
\end{proof}
Observe that the above proposition ensures the positivity of $(-\Delta_{\mathbb{N}})^{\sigma}-W_{\alpha,\sigma}$ qualitatively. In the next proposition, we quantify this positivity by proving a {\it Hardy-type identity}, which can be viewed as a {\it ground state representation} of $(-\Delta_{\mathbb{N}})^{\sigma}$ [cf. \cite{FLS06}]. We mainly follow the ideas from \cite[Theorem 2.1]{CR18}.
\begin{proposition}[Ground state representation] \label{Prop:HW_Alt}  Let $\sigma, \alpha \in (0,3/2)$ and $\sigma<\alpha<1+\sigma$. Then
    \begin{equation}
    \langle  [(-\Delta_{\mathbb{N}})^\sigma-W_{\alpha,\sigma}] \varphi, \varphi \rangle_{\ell^2(\mathbb{N})} = \sum_{n\in\mathbb{N}}\sum_{m\in\mathbb{N}}-\mathcal{K}^\sigma_{m,n}\left(\frac{\varphi(n) }{\mathcal{I}_{\alpha}(n) }-\frac{\varphi(m) }{\mathcal{I}_{\alpha}(m) }\right)^2 \mathcal{I}_{\alpha}(n) \mathcal{I}_{\alpha}(m)  \,, \ \ \forall \varphi \in C_c(\mathbb{N}) \,,
\end{equation}
where $\mathcal{K}^\sigma_{m,n}$ is given by \eqref{matrix-element}.
In particular, if $\sigma \in (0,1]$ then $W_{\alpha,\sigma}$ is a Hardy-weight of $(-\Delta_{\mathbb{N}})^\sigma$.
\end{proposition}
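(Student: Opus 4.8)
The plan is to perform the standard discrete ground state (Doob $h$-) transform on the graph-Laplacian representation \eqref{Eq:graph_Lap}--\eqref{q_form} of $(-\Delta_{\mathbb{N}})^{\sigma}$, in the spirit of \cite[Theorem 2.1]{CR18}; this representation is available precisely because $\sigma\in(0,1]$ forces $-\widetilde{\mathcal{K}}^{\sigma}\ge 0$ (Proposition \ref{Prop:graphLap}). Write $u:=\mathcal{I}_{\alpha}$, and for $\varphi\in C_c(\mathbb{N})$ set $\psi:=\varphi/u$; since $u>0$ everywhere by \eqref{riesz}, $\psi$ again lies in $C_c(\mathbb{N})$. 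Starting from \eqref{q_form} and subtracting $\langle W_{\alpha,\sigma}\varphi,\varphi\rangle_{\ell^2(\mathbb{N})}$, I would substitute $\varphi=u\psi$ and use the elementary pointwise identity
\[
(u_m\psi_m-u_n\psi_n)^2=u_mu_n(\psi_m-\psi_n)^2+u_m(u_m-u_n)\psi_m^2+u_n(u_n-u_m)\psi_n^2 .
\]
Multiplying by $-\mathcal{K}^{\sigma}_{m,n}$ and summing over $n,m$, the symmetry $\mathcal{K}^{\sigma}_{m,n}=\mathcal{K}^{\sigma}_{n,m}$ merges the last two terms into the single cross contribution $-\sum_n\psi_n^2\,u_n\big(u_nR^{\sigma}_n-\sum_m\mathcal{K}^{\sigma}_{m,n}u_m\big)$.

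The decisive input is Proposition \ref{Prop:Imp}, which gives $\sum_m\mathcal{K}^{\sigma}_{m,n}u_m=(-\Delta_{\mathbb{N}})^{\sigma}\mathcal{I}_{\alpha}(n)=\mathcal{I}_{\alpha-\sigma}(n)=W_{\alpha,\sigma}(n)\,u_n$, together with $\sum_m\mathcal{K}^{\sigma}_{m,n}=R^{\sigma}_n$ by the definition of $R^{\sigma}$. Hence the cross contribution equals $-\sum_n\big(R^{\sigma}_n-W_{\alpha,\sigma}(n)\big)\varphi_n^2$, which precisely cancels the potential term $\sum_nR^{\sigma}_n\varphi_n^2$ from \eqref{q_form} and the term $-\sum_nW_{\alpha,\sigma}(n)\varphi_n^2$ coming from the left-hand side; what survives is exactly the diagonal-free form $\sum_{n,m}(-\mathcal{K}^{\sigma}_{m,n})(\psi_n-\psi_m)^2u_nu_m$, i.e.\ the claimed ground state identity. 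The concluding assertion then follows at once: $-\mathcal{K}^{\sigma}_{m,n}\ge 0$ for $m\ne n$ (Proposition \ref{Prop:graphLap}) and $\mathcal{I}_{\alpha}>0$, so the right-hand side is nonnegative, whence $(-\Delta_{\mathbb{N}})^{\sigma}-W_{\alpha,\sigma}\ge0$ on $C_c(\mathbb{N})$ and $W_{\alpha,\sigma}$ is a Hardy-weight.

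The delicate point --- and the reason the hypothesis $\alpha<1+\sigma$ cannot be dropped --- is the absolute convergence needed to legitimize the rearrangements and symmetrizations above, since here, unlike in \cite{CR18,KN23}, the Riesz potential $\mathcal{I}_{\alpha}$ is \emph{unbounded} when $\alpha>1$. For $n$ in the (finite) support of $\psi$, the tail of the relevant inner sum is dominated by $\sum_m m^{2\alpha-2}\,|m-n|^{-1-2\sigma}$, using $\mathcal{I}_{\alpha}(m)\asymp m^{2\alpha-2}$ (Remark \ref{Eq:Asymp_Riesz}) and $|\mathcal{K}^{\sigma}_{m,n}|\lesssim|m-n|^{-1-2\sigma}$ from \eqref{Eq:Asmp_K}, and this converges if and only if $2\alpha-2-(1+2\sigma)<-1$, that is $\alpha<1+\sigma$. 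Once this is in place, Fubini and dominated convergence justify splitting $\sum_m\mathcal{K}^{\sigma}_{m,n}u_m$ and interchanging the order of summation, and the computation goes through as described; the restriction $\sigma\le 1$ is used only through Proposition \ref{Prop:graphLap} (to have the graph structure and the sign $-\mathcal{K}^{\sigma}_{m,n}\ge0$) and Proposition \ref{Prop:Imp}.
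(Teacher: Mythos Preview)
Your proposal is correct and follows essentially the same ground state transform as the paper, just organized through a different but equivalent algebraic identity: you expand $(u_m\psi_m-u_n\psi_n)^2$ directly, whereas the paper polarizes \eqref{q_form} with $f=\varphi^2/\mathcal{I}_\alpha$ and $g=\mathcal{I}_\alpha$ and uses $(f_m-f_n)(g_m-g_n)=(\varphi_m-\varphi_n)^2-(\psi_m-\psi_n)^2\mathcal{I}_\alpha(m)\mathcal{I}_\alpha(n)$. Both routes invoke Proposition~\ref{Prop:Imp} at the same point and yield the same identity (with the factor $\tfrac12$ as in \eqref{Eq:GSR}); your explicit discussion of absolute convergence under $\alpha<1+\sigma$ is a welcome addition, as the paper leaves this implicit. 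One small inaccuracy: Proposition~\ref{Prop:Imp} itself does not require $\sigma\le 1$, so the restriction enters only through Proposition~\ref{Prop:graphLap}.
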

\begin{proof}
For a given $\varphi \in C_c(\mathbb{N})$, we take $u(n) =\varphi(n)^2 /\mathcal{I}_{\alpha}(n) $. Then
\begin{equation}\label{weight-construction}
     \langle (-\Delta_{\mathbb{N}})^\sigma u,\mathcal{I}_{\alpha}\rangle_{\ell^2(\mathbb{N})}= \langle u,\mathcal{I}_{\alpha-\sigma}\rangle_{\ell^2(\mathbb{N})}=\sum_{n\in\mathbb{N}}\frac{\mathcal{I}_{\alpha-\sigma}}{\mathcal{I}_\alpha} \varphi(n)^2.
\end{equation}
We can polarize \eqref{q_form} with $f=u$ and $g=\mathcal{I}_{\alpha}$  to get
\begin{align*}\label{ground}
\begin{split}
& \, \langle  f,(-\Delta_{\mathbb{N}})^\sigma g\rangle_{\ell^2(\mathbb{N})}=-\frac{1}{2}\sum_{n\in\mathbb{N}}\sum_{m\in\mathbb{N}}\mathcal{K}^\sigma_{m,n}(f_m -f_n )(g_m -g_n )+\sum_{n\in\mathbb{N}}R^\sigma_nf_n g_n \\&=-\frac{1}{2}\sum_{n\in\mathbb{N}}\sum_{m\in\mathbb{N}}\mathcal{K}^\sigma_{m,n}\left((\varphi(n) -\varphi(m) )^2-\left(\frac{\varphi(n) }{\mathcal{I}_{\alpha}(n) }-\frac{\varphi(m) }{\mathcal{I}_{\alpha}(m) }\right)^2 \mathcal{I}_{\alpha}(n) \mathcal{I}_{\alpha}(m) \right)+\sum_{n\in\mathbb{N}}R^\sigma_n \varphi(n)^2 \\&= \langle  \varphi,(-\Delta_{\mathbb{N}})^\sigma \varphi \rangle_{\ell^2(\mathbb{N})}+\frac{1}{2}\sum_{n\in\mathbb{N}}\sum_{m\in\mathbb{N}}\mathcal{K}^\sigma_{m,n}\left(\frac{\varphi(n) }{\mathcal{I}_{\alpha}(n) }-\frac{\varphi(m) }{\mathcal{I}_{\alpha}(m) }\right)^2 \mathcal{I}_{\alpha}(n) \mathcal{I}_{\alpha}(m) \\& =\sum_{n\in\mathbb{N}}\frac{\mathcal{I}_{\alpha-\sigma}}{\mathcal{I}_\alpha} \varphi(n)^2\,,
\end{split}
\end{align*}
where we used \eqref{weight-construction} in the last equality. Consequently, we obtain the following ground state representation of $(-\Delta)^{\sigma}$
\begin{equation} \label{Eq:GSR}
    \langle  \varphi,[(-\Delta_{\mathbb{N}})^\sigma-W_{\alpha,\sigma}] \varphi\rangle_{\ell^2(\mathbb{N})}=\frac{1}{2} \sum_{n\in\mathbb{N}}\sum_{m\in\mathbb{N}}-\mathcal{K}^\sigma_{m,n}\left(\frac{\varphi(n) }{\mathcal{I}_{\alpha}(n) }-\frac{\varphi(m) }{\mathcal{I}_{\alpha}(m) }\right)^2 \mathcal{I}_{\alpha}(m) \mathcal{I}_{\alpha}(n) 
\end{equation}
with 
\begin{equation}
    \label{weight}
    W_{\alpha,\sigma}(n)=\frac{\mathcal{I}_{\alpha-\sigma}}{\mathcal{I}_\alpha}.  
\end{equation}

Since $\mathcal{K}^\sigma_{m,n} \leq 0$ for $m \neq n$ and $\sigma \in (0,1]$ (Proposition \ref{Prop:graphLap}), it follows from \eqref{Eq:GSR} that $W_{\alpha,\sigma}$ is a Hardy-weight of $(-\Delta_{\mathbb{N}})^{\sigma}$.
\end{proof}

\begin{rmk} \rm\label{Rmk:weight-asympt}
We can infer the decay of the Hardy-weight $W_{\alpha,\sigma}$ near infinity using (\ref{riesz}) as follows
\begin{align} \label{Eq:Asmp_W}
    W_{\alpha,\sigma}(n) & =4^\sigma\frac{\Gamma(\frac{3}{2}-\alpha+\sigma)\Gamma(\alpha)}{\Gamma(\alpha-\sigma)\Gamma(\frac{3}{2}-\alpha)}\frac{\Gamma(n+\alpha-\sigma-1)\Gamma(n-\alpha+2)}{\Gamma(n-\alpha+\sigma+2)\Gamma(n+\alpha-1)} \nonumber \\
    &\sim 4^\sigma\frac{\Gamma(\frac{3}{2}-\alpha+\sigma)\Gamma(\alpha)}{\Gamma(\alpha-\sigma)\Gamma(\frac{3}{2}-\alpha)} n^{-2\sigma} \,,
\end{align}
where the asymptotic behavior follows from \eqref{eq:asympga}.
\end{rmk}

\subsection[Criticality of the quadratic form]{Criticality of $(-\Delta_{\mathbb{N}})^{\sigma}-W_{\alpha,\sigma}$.}
In this section, we will show the criticality of $(-\Delta_{\mathbb{N}})^{\sigma}-W_{\alpha,\sigma}$ for $\sigma \in (0,1]$ and $\alpha \in (\sigma, \frac{3+2\sigma}{4}]$, where $W_{\alpha,\sigma}$ is the Hardy-weight constructed in the previous subsection. For $\sigma \in (0,1)$, we use the criticality characterization (Proposition \ref{Prop:char_cri}) for this purpose. That is, we will construct a null-sequence for $\mathcal{Q}_{W_{\alpha,\sigma}}$ (recall \eqref{eq:quadratic}). On the other hand, for $\sigma =1$, we use the Liouville-type comparison principle (Theorem \ref{Thrm_Liouville}).

While dealing with the energy functional $\mathcal{Q}_{W_{\alpha,\sigma}}$, it is often convenient to consider the following {\it simplified energy functional}:
\begin{equation} \label{Eq:SEF}
Q_{\alpha}^\sigma\left(\varphi\right):=\frac{1}{2} \sum_{n, m \in \mathbb{Z}} (- \widetilde{\mathcal{K}}^\sigma_{m,n}) \mathcal{I}_\alpha(n) \mathcal{I}_\alpha(m)(\varphi(n)-\varphi(m))^2 \,, \ \ \ \varphi \in C_c(\mathbb{N})
\end{equation}
as it is equivalent to  $\mathcal{Q}_{W_{\alpha,\sigma}}$ in the following sense  \cite[Proposition 4.8]{KPP17}
\begin{equation} \label{Eq:GST}
\mathcal{Q}_{W_{\alpha,\sigma}}\left(\varphi\mathcal{I}_\alpha\right)=Q_{\alpha}^\sigma(\varphi)\,, \ \ \ \forall \varphi \in C_c(\mathbb{N}) \,.
\end{equation}
Note that (\ref{Eq:GST}) is just a way to rewrite (\ref{Eq:GSR}) in terms of quadratic forms.
\begin{proposition} \label{Prop:crit}
    Let $\sigma \in (0,1)$. Then $(-\Delta_{\mathbb{N}})^{\sigma} - W_{\alpha,\sigma}$ is critical for all $\alpha \in (\sigma, \frac{3+2\sigma}{4}]$.
\end{proposition}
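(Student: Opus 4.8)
The plan is to construct an explicit null-sequence for the simplified energy functional $Q_\alpha^\sigma$ and invoke Proposition \ref{Prop:char_cri} together with the equivalence \eqref{Eq:GST}. Since $\mathcal{Q}_{W_{\alpha,\sigma}}(\varphi \mathcal{I}_\alpha) = Q_\alpha^\sigma(\varphi)$, it suffices to produce a sequence $(\varphi_k) \subseteq C_c(\mathbb{N})$ with $\varphi_k \geq 0$, $\varphi_k(n_0) \asymp 1$ for some fixed $n_0$, and $Q_\alpha^\sigma(\varphi_k) \to 0$; then $\phi_k := \varphi_k \mathcal{I}_\alpha$ is a null-sequence for $\mathcal{Q}_{W_{\alpha,\sigma}}$ (here one uses $\mathcal{I}_\alpha(n_0) \asymp 1$ so that the normalization transfers). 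The natural candidate, following the continuum and $\mathbb{Z}$-analogues, is a logarithmically cut-off version of the constant function: take $\varphi_k$ equal to $1$ up to some radius, decaying like a ratio of logarithms of $\mathcal{I}_\alpha$ (or of $n$) on an annulus, and $0$ beyond. The point of the $\alpha = \frac{3+2\sigma}{4}$ threshold is precisely that at this value the Agmon ground state of $(-\Delta_{\mathbb{N}})^\sigma - W_{\alpha,\sigma}$ should be $\mathcal{I}_\alpha$ itself, and $\mathcal{I}_\alpha^2$ fails to be summable against the natural measure in just the borderline way that makes a logarithmic cut-off work.

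More concretely, I would first record the pointwise asymptotics needed: $\mathcal{I}_\alpha(n) \asymp n^{2\alpha - 2}$ from \eqref{Eq:Asmp_I}, and $-\widetilde{\mathcal{K}}^\sigma_{m,n} \asymp |m-n|^{-1-2\sigma}$ from \eqref{Eq:Asmp_K} (with the matching lower bound needing a small separate argument, or one can work with the upper bound since we only need $Q_\alpha^\sigma(\varphi_k) \to 0$). Then I would estimate
\begin{equation*}
Q_\alpha^\sigma(\varphi_k) \lesssim \sum_{n, m} \frac{\mathcal{I}_\alpha(n)\mathcal{I}_\alpha(m)}{|m-n|^{1+2\sigma}} (\varphi_k(n) - \varphi_k(m))^2
\end{equation*}
and split the double sum according to whether both $n,m$ lie in the "flat" region (contribution zero), both lie beyond the support (contribution zero), or at least one lies in the transition annulus. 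On the annulus, with $\varphi_k$ chosen so that $|\varphi_k(n) - \varphi_k(m)| \lesssim |m-n| \cdot |\nabla \varphi_k|$ for nearby points and $\lesssim 1$ otherwise, one bounds the local part using $\sum_{|m-n| \leq \text{something}} |m-n|^{1-2\sigma} \mathcal{I}_\alpha(n)\mathcal{I}_\alpha(m)$ against $n^{4\alpha - 4} \cdot n^{2 - 2\sigma} \cdot (\text{derivative})^2$, and the nonlocal tail part using the convergence of $\sum_m |m-n|^{-1-2\sigma}$. With the logarithmic profile this produces a bound of order $1/\log k$ on the annulus $[k, k^2]$ precisely when $\alpha \leq \frac{3+2\sigma}{4}$, i.e. when $4\alpha - 4 + 2 - 2\sigma + (-1) \leq -1$, which is the summability/divergence borderline; for $\alpha < \frac{3+2\sigma}{4}$ one even gets a strictly convergent sum and a cruder cut-off works, while for $\alpha = \frac{3+2\sigma}{4}$ the logarithmic cut-off is essential.

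The main obstacle I anticipate is the nonlocal (long-range) interaction term in $Q_\alpha^\sigma$: unlike the classical discrete Laplacian, $-\widetilde{\mathcal{K}}^\sigma$ has full support, so one cannot localize the energy estimate to nearest neighbors, and the product weight $\mathcal{I}_\alpha(n)\mathcal{I}_\alpha(m)$ grows in both variables. Controlling $\sum_{n \in A, m \notin A} |m-n|^{-1-2\sigma}\mathcal{I}_\alpha(n)\mathcal{I}_\alpha(m)(\varphi_k(n) - \varphi_k(m))^2$ where $A$ is the support requires care: one uses that for $m$ far from the annulus the factor $|m-n|^{-1-2\sigma}$ decays fast enough to absorb the polynomial growth $\mathcal{I}_\alpha(m) \asymp m^{2\alpha-2}$ exactly when $2\alpha - 2 < 2\sigma$, i.e. $\alpha < 1 + \sigma$ — which is the hypothesis under which $W_{\alpha,\sigma}$ was defined. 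The remaining steps (checking $\varphi_k \geq 0$, the normalization $\varphi_k(n_0) \asymp 1$, and that $\phi_k \in C_c(\mathbb{N})$) are routine. I would also remark that this argument genuinely needs $\sigma < 1$: the exponent bookkeeping for the tail sum and the kernel asymptotics \eqref{Eq:Asmp_K} are harmless for $\sigma = 1$, but the null-sequence construction's convergence rate degenerates (the kernel becomes nearest-neighbor-like only in a degenerate sense), which is why the $\sigma = 1$ case is handled separately via the Liouville comparison principle.
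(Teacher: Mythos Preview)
Your overall strategy matches the paper's: construct a null-sequence from a logarithmic cutoff $\varphi_k$ (equal to $1$ on $[1,k]$, decaying on $(k,k^2]$, zero beyond), and show $Q_\alpha^\sigma(\varphi_k)\to 0$ via the identity \eqref{Eq:GST}. The difference is in how you estimate, and your scheme has a genuine gap at the endpoint $\alpha=\frac{3+2\sigma}{4}$.

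The problem is the crude bound $(\varphi_k(n)-\varphi_k(m))^2\lesssim 1$ for the nonlocal piece. Take both $n,m$ in the annulus $(k,k^2]$ with $m>2n$; then $|m-n|\asymp m$ and the contribution is
\[
\sum_{k<n\le k^2} n^{2\alpha-2}\sum_{m>2n} m^{2\alpha-3-2\sigma}
\ \asymp\ \sum_{k<n\le k^2} n^{4\alpha-4-2\sigma},
\]
which at the endpoint has exponent $-1$ and hence is $\asymp\log k$, not $o(1)$. So the crude bound suffices only for $\alpha<\frac{3+2\sigma}{4}$. (Your local exponent line $4\alpha-4+2-2\sigma+(-1)\le -1$ is also miscounted: including the $n^{-2}$ from the squared gradient gives $4\alpha-4-2\sigma\le -1$, which is the correct threshold.)

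The paper avoids the local/nonlocal split entirely by proving the uniform estimate
\[
(\varphi_k(n)-\varphi_k(m))^2 \le \Big(\tfrac{\log(m/n)}{\log k}\Big)^2
\]
for \emph{all} pairs, and then substituting and rescaling $t=m/n$ to decouple the double sum into a product: one factor is $\int_1^\infty(\log t)^2(t-1)^{-1-2\sigma}t^{-(2-2\alpha)}\,\mathrm{d}t$ (finite exactly when $\sigma<1$, which is the real reason $\sigma=1$ needs separate treatment), and the other is $\int_1^{k^2} t^{-(4-4\alpha+2\sigma)}\,\mathrm{d}t$, which is $O(\log k)$ precisely at $\alpha=\frac{3+2\sigma}{4}$, yielding the $1/\log k$ decay. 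If you replace your nonlocal ``$\lesssim 1$'' by this logarithmic bound, your sketch becomes the paper's proof.
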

\begin{proof}
In the view of Proposition \ref{Prop:char_cri}, we aim to construct a null-sequence for $\mathcal{Q}_{W_{\alpha,\sigma}}$.
  For $k\in \mathbb{N}$, we define $\varphi_k:\mathbb{N}\rightarrow\mathbb{R}$,
    \begin{equation*}
       \varphi_k(n)=\left\{\begin{matrix}
           1\\
           2-\frac{\log n}{\log k}\\
           0
       \end{matrix}\right.
       \begin{matrix}
         \ ,\ \\
         \ ,\ \\
          ,
       \end{matrix}
       \begin{matrix}
           n\leq k \,,\\
           k<n\leq k^2 \,,\\
           k^2<n\,.
       \end{matrix}
   \end{equation*}
   This sequence is in $C_c(\mathbb{N})$ and $\varphi_k \overset{k\rightarrow\infty}{\longrightarrow} 1$ pointwise. Furthermore, we claim that
   \begin{equation} \label{Eq:claim}
    (\varphi_k(n)-\varphi_k(m))^2 \leq \left(\frac{\log (m/n)}{\log k}\right)^2.
\end{equation}
Due to symmetry, it is enough to prove the above inequality only for $n<m$. If $k^2\leq n<m$, then the inequality is obvious.
If $n < k^2 < m$, then there can be two situations $(i)$ $n<k$ and $(ii)$ $n \geq k$. In situation $(i)$, we have
\begin{equation*}
    (\varphi_k(n)-\varphi_k(m))^2=1= (2-1)^2 \leq\left(\frac{\log m}{\log k}-\frac{\log n}{\log k}\right)^2=\left(\frac{\log (m/n)}{\log k}\right)^2.
\end{equation*}
For situation $(ii)$
\begin{equation*}
    (\varphi_k(n)-\varphi_k(m))^2=(2-\frac{\log n}{\log k})^2 \leq\left(\frac{\log m}{\log k}-\frac{\log n}{\log k}\right)^2=\left(\frac{\log (m/n)}{\log k}\right)^2.
\end{equation*}
Now, consider $n<m \leq k^2$. Three cases may occur $(i)$ $k \leq n <m \leq k^2$, $(ii)$ $n < k <m \leq k^2$ and $(iii)$ $n<m \leq k$. For case $(i)$, 
\begin{equation*}
    (\varphi_k(n)-\varphi_k(m))^2=\left(\frac{\log n}{\log k}-\frac{\log m}{\log k}\right)^2=\left(\frac{\log (m/n)}{\log k}\right)^2 \,.
\end{equation*}
In case $(ii)$, we have
\begin{equation*}
    (\varphi_k(n)-\varphi_k(m))^2=\left(1-\frac{\log m}{\log k}\right)^2 \leq \left(\frac{\log n}{\log k}-\frac{\log m}{\log k}\right)^2 =\left(\frac{\log (m/n)}{\log k}\right)^2 \,.
\end{equation*}
When case $(iii)$ occurs, \eqref{Eq:claim} holds trivially.

We will use the estimate \eqref{Eq:claim} to show that $Q_{\alpha}^\sigma\left(\varphi_k\right) \rightarrow 0$ as $k \rightarrow \infty$, where $Q_{\alpha}^\sigma$ is the simplified energy functional defined in \eqref{Eq:SEF}.
Using \eqref{Eq:Asmp_K}, we get
\begin{align} \label{Eq:split}
\begin{split}
    Q_{\alpha}^\sigma\left(\varphi_k\right)&\lesssim \sum_{1\leq n <m}\frac{(\varphi_k(n)-\varphi_k(m))^2}{(m-n)^{2\sigma+1}m^{2-2\alpha}n^{2-2\alpha}} \\
    &= \sum_{1\leq n<m\leq k^2}\frac{(\varphi_k(n)-\varphi_k(m))^2}{(m-n)^{2\sigma+1}m^{2-2\alpha}n^{2-2\alpha}}+\sum_{1\leq n\leq k^2<m}\frac{(\varphi_k(n)-\varphi_k(m))^2}{(m-n)^{2\sigma+1}m^{2-2\alpha}n^{2-2\alpha}} \,.
    \end{split}
\end{align}
Let us first deal with the first sum. We use \eqref{Eq:claim} to obtain
\begin{align*}
    &\sum_{1\leq n<m\leq k^2}\frac{(\varphi_k(n)-\varphi_k(m))^2}{(m-n)^{2\sigma+1}m^{2-2\alpha}n^{2-2\alpha}}\leq 
    \frac{1}{(\log k )^2}\sum_{1\leq n<m\leq k^2}\frac{(\log (m/n))^2}{(m-n)^{2\sigma+1}m^{2-2\alpha}n^{2-2\alpha}}\\
    &\leq\frac{1}{(\log k )^2}\sum_{n\leq k^2}\frac{1}{n^{2(2-2\alpha)+1+2\sigma}}\sum_{n<m\leq k^2}\frac{(\log (m/n))^2}{(m/n-1)^{2\sigma+1}(m/n)^{2-2\alpha}}\\
    &\leq\frac{1}{(\log k )^2}\sum_{n\leq k^2}\frac{1}{n^{2(2-2\alpha)+2\sigma}}\int_{\frac{n+1}{n}}^{\frac{k^2}{n}}\frac{(\log t)^2\ \mathrm{d}t}{(t-1)^{2\sigma+1}(t)^{2-2\alpha}} \\
    &\leq\frac{1}{(\log k )^2} \left[\int_{1}^{\infty}\frac{(\log t)^2\ \mathrm{d}t}{(t-1)^{2\sigma+1}(t)^{2-2\alpha}} \right] \left[\sum_{n\leq k^2}\frac{1}{n^{2(2-2\alpha)+2\sigma}} \right]  \\
    &\leq\frac{1}{(\log k )^2} \left[\int_{1}^{\infty}\frac{(\log t)^2\ \mathrm{d}t}{(t-1)^{2\sigma+1}(t)^{2-2\alpha}} \right] \left[\int_1^{k^2} \frac{\mathrm{d}t}{t^{2(2-2\alpha)+2\sigma}} \right]  \,. 
\end{align*} 
For the second sum in \eqref{Eq:split}, we use \eqref{Eq:claim} to obtain
\begin{align*}
    &\sum_{1\leq n\leq k^2<m}\frac{(\varphi_k(n)-\varphi_k(m))^2}{(m-n)^{2\sigma+1}m^{2-2\alpha}n^{2-2\alpha}}\leq 
    \frac{1}{(\log k )^2}\sum_{1\leq n\leq k^2<m}\frac{(\log (m/n))^2}{(m-n)^{2\sigma+1}m^{2-2\alpha}n^{2-2\alpha}}\\
    &\leq\frac{1}{(\log k )^2}\sum_{n\leq k^2}\frac{1}{n^{2(2-2\alpha)+1+2\sigma}}\sum_{k^2<m}\frac{(\log (m/n))^2}{(m/n-1)^{2\sigma+1}(m/n)^{2-2\alpha}}\\
    &\leq\frac{1}{(\log k )^2}\sum_{n\leq k^2}\frac{1}{n^{2(2-2\alpha)+2\sigma}}\int_{\frac{k^2}{n}}^{\infty}\frac{(\log t)^2\ \mathrm{d}t}{(t-1)^{2\sigma+1}(t)^{2-2\alpha}} \\
    &\leq\frac{1}{(\log k )^2} \left[\int_{1}^{\infty}\frac{(\log t)^2\ \mathrm{d}t}{(t-1)^{2\sigma+1}(t)^{2-2\alpha}} \right] \left[\sum_{n\leq k^2}\frac{1}{n^{2(2-2\alpha)+2\sigma}} \right]  \\
    &\leq\frac{1}{(\log k )^2} \left[\int_{1}^{\infty}\frac{(\log t)^2\ \mathrm{d}t}{(t-1)^{2\sigma+1}(t)^{2-2\alpha}} \right] \left[\int_1^{k^2} \frac{\mathrm{d}t}{t^{2(2-2\alpha)+2\sigma}} \right]   \,.
\end{align*} 
Using the preceding two estimates in \eqref{Eq:split}, we conclude that
$$ Q_{\alpha}^\sigma\left(\varphi_k\right) \lesssim \frac{1}{(\log k)^2 } \left[\int_{1}^{\infty}\frac{(\log t)^2\ \mathrm{d}t}{(t-1)^{2\sigma+1}(t)^{2-2\alpha}} \right] \left[\int_1^{k^2} \frac{\mathrm{d}t}{t^{2(2-2\alpha)+2\sigma}} \right]\rightarrow 0 \,,$$
as $k \rightarrow \infty$. Notice that here we use the fact that $\int_{1}^{\infty}\frac{(\log t)^2\ \mathrm{d}t}{(t-1)^{2\sigma+1}(t)^{2-2\alpha}}< \infty $, which is true if $\sigma \in (0,1)$. By \eqref{Eq:GST}, it follows that $\mathcal{Q}_{W_{\alpha,\sigma}}\left(\varphi_k\mathcal{I}_\alpha\right) \rightarrow 0$ as $k \rightarrow \infty$. This implies $(\varphi_k\mathcal{I}_\alpha)$ is a null-sequence for $\mathcal{Q}_{W_{\alpha,\sigma}}$ which converges to $\mathcal{I}_\alpha$ pointwise as $k \rightarrow \infty$. Hence, $(-\Delta_{\mathbb{N}})^{\sigma}-W_{\alpha,\sigma}$ is critical with $\mathcal{I}_\alpha$ as its Agmon ground state.
\end{proof}

\begin{proposition}\label{prop:crit_laplacian}
    $-\Delta_{\mathbb{N}} - W_{\alpha,1}$ is critical for all $\alpha \in (1, \frac{5}{4}]$.
\end{proposition}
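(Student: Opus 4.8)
The plan is to derive criticality from the Liouville-type comparison principle (Theorem \ref{Thrm_Liouville}) with $\sigma=1$, since the null-sequence construction of Proposition \ref{Prop:crit} is not available here. Indeed, in that construction the simplified energy of the cut-offs $\varphi_k$ is controlled by a multiple of $(\log k)^{-2}\big(\int_1^\infty(\log t)^2(t-1)^{-(2\sigma+1)}t^{2\alpha-2}\,\mathrm{d}t\big)\big(\int_1^{k^2}t^{-(2(2-2\alpha)+2\sigma)}\,\mathrm{d}t\big)$, and at $\sigma=1$ the first integral diverges (its integrand behaves like $(t-1)^{1-2\sigma}=(t-1)^{-1}$ as $t\to1^+$, using $(\log t)^2\sim(t-1)^2$). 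So an external critical operator must be imported, and the natural candidate on $\mathbb{N}$ is the Keller--Pinchover--Pogorzelski operator $\mathcal{H}_{W^{\mathrm{KPP}}}:=-\Delta_{\mathbb{N}}-W^{\mathrm{KPP}}$, with $W^{\mathrm{KPP}}$ as in \eqref{HW-Op_KPP}.

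First I would identify $\mathcal{H}_{W^{\mathrm{KPP}}}$ as a critical operator with an explicit Agmon ground state. From \cite{KPPAMS,KPP18}, $W^{\mathrm{KPP}}$ is an optimal Hardy-weight for $-\Delta_{\mathbb{N}}$, so in particular $\mathcal{H}_{W^{\mathrm{KPP}}}\geq 0$ is critical. Moreover, setting $v(n):=\sqrt{n}$ (with $v(0)=0$, consistent with the boundary convention in \eqref{Discrete-Laplacian}), a direct computation gives
\begin{equation*}
    (-\Delta_{\mathbb{N}}v)(n)=2\sqrt{n}-\sqrt{n-1}-\sqrt{n+1}=\Big(2-\sqrt{1+\tfrac1n}-\sqrt{1-\tfrac1n}\Big)\sqrt{n}=W^{\mathrm{KPP}}(n)\,v(n)\,,
\end{equation*}
so $v>0$ is a (super)solution of $\mathcal{H}_{W^{\mathrm{KPP}}}[\varphi]=0$ on $\mathbb{N}$; since a critical operator has, up to positive scalar multiples, a unique strictly positive supersolution, namely its Agmon ground state (see \cite[Theorem 5.3]{KPP18}), $v$ is the Agmon ground state of $\mathcal{H}_{W^{\mathrm{KPP}}}$. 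This gives hypothesis (a) of Theorem \ref{Thrm_Liouville}.

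Then, fixing $\alpha\in(1,5/4]$, I would check the remaining hypotheses with the potential $W_{\alpha,1}$ and $u=\mathcal{I}_\alpha$. Positivity $\mathcal{H}_{W_{\alpha,1}}\geq 0$ is Proposition \ref{Prop:HW}. Since $1<\alpha<2$, Proposition \ref{Prop:Imp} applies and yields $(-\Delta_{\mathbb{N}})\mathcal{I}_\alpha=\mathcal{I}_{\alpha-1}=W_{\alpha,1}\,\mathcal{I}_\alpha$, i.e. $\mathcal{H}_{W_{\alpha,1}}[\mathcal{I}_\alpha]=0$; as $\mathcal{I}_\alpha>0$, its positive part is $(\mathcal{I}_\alpha)_+=\mathcal{I}_\alpha\neq0$ and $\mathcal{H}_{W_{\alpha,1}}[(\mathcal{I}_\alpha)_+]=0\leq0$, which is hypothesis (b). For the domination hypothesis (c), the explicit formula \eqref{riesz} together with \eqref{eq:asympga} gives the two-sided bound $\mathcal{I}_\alpha(n)\asymp n^{2\alpha-2}$ on all of $\mathbb{N}$ (cf. \eqref{Eq:Asmp_I}), whence $\mathcal{I}_\alpha(n)/v(n)\asymp n^{2\alpha-5/2}$, which is bounded on $\mathbb{N}$ precisely because $\alpha\leq 5/4$ forces $2\alpha-\tfrac52\leq0$; hence $\mathcal{I}_\alpha\leq C\sqrt{\cdot}=Cv$ for a suitable $C>0$. (Both $u,v$ lie in $\mathcal{F}_{\mathbb{N}}^{1}=C(\mathbb{N})$ trivially, the graph being nearest-neighbour.) Theorem \ref{Thrm_Liouville} then gives that $\mathcal{H}_{W_{\alpha,1}}=-\Delta_{\mathbb{N}}-W_{\alpha,1}$ is critical, with $\mathcal{I}_\alpha$ as Agmon ground state, which is the assertion. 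The only genuinely external ingredient is the criticality of $\mathcal{H}_{W^{\mathrm{KPP}}}$; once that is granted, the single real constraint is the growth match $n^{2\alpha-2}\lesssim n^{1/2}$, which is exactly what singles out $\alpha\in(1,5/4]$ and, at the endpoint $\alpha=\tfrac54$, forces $\mathcal{I}_{5/4}(n)\asymp\sqrt{n}$ — so the main obstacle is essentially just having the right reference operator and its ground state in hand, after which the verification is bookkeeping.
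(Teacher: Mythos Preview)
Your proof is correct and follows essentially the same approach as the paper: both invoke the Liouville-type comparison principle (Theorem~\ref{Thrm_Liouville}) with the KPP operator $-\Delta_{\mathbb{N}}-W^{\mathrm{KPP}}$ and its Agmon ground state $v(n)=\sqrt{n}$ as the reference, then verify the domination $\mathcal{I}_\alpha\lesssim v$ via the asymptotic $\mathcal{I}_\alpha(n)\asymp n^{2\alpha-2}$, which holds precisely when $\alpha\le 5/4$. Your write-up is in fact slightly more detailed than the paper's, as you explicitly verify that $\sqrt{n}$ solves $\mathcal{H}_{W^{\mathrm{KPP}}}[\varphi]=0$ and explain why the null-sequence argument of Proposition~\ref{Prop:crit} breaks down at $\sigma=1$.
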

\begin{proof}
    From \cite[Theorem 7.3]{KPP18} we know that $W^{\operatorname{KPP}}(n)=2-\sqrt{\frac{n+1}{n}}-\sqrt{\frac{n-1}{n}}$ is an optimal Hardy-weight for the operator $-\Delta_{\mathbb{N}}$. In particular, the Schr\" odinger operator $-\Delta_{\mathbb{N}}-W^{\operatorname{KPP}}$ is critical with corresponding Agmon ground state $v(n)=n^{1/2}$. On the other hand, from Proposition \ref{Prop:HW}, we see that $-\Delta_{\mathbb{N}}-W_{\alpha,1} \geq 0$ and $\mathcal{I}_\alpha$ is a positive solution of $(-\Delta_{\mathbb{N}}-W_{\alpha,1})[\varphi] = 0$ on $\mathbb{N}$. From Remark \ref{Eq:Asymp_Riesz} we know the asymptotic behaviour of $\mathcal{I}_\alpha$, which implies that
    \begin{equation*}
        \mathcal{I}_\alpha(n)\leq C v(n) 
    \end{equation*}
    for all $n\in\mathbb{N}$ and some $C>0$, if $\alpha\leq 5/4$. Thus, taking $\mathcal{H}_{W'}=-\Delta_{\mathbb{N}}-W^{\operatorname{KPP}}$ and $\mathcal{H}_{W}=-\Delta_{\mathbb{N}}-W_{\alpha,1}$ on $\mathbb{N}$, we verify that the hypotheses of Theorem \ref{Thrm_Liouville} are fulfilled. Therefore, $-\Delta_{\mathbb{N}}-W_{\alpha,1}$ is critical and $\mathcal{I}_\alpha$ is its Agmon ground state.
\end{proof}

\subsection[Null criticality of the quadratic form]{Null criticality of $(-\Delta_{\mathbb{N}})^{\sigma}-W_{\alpha,\sigma}$.}
In the earlier subsection, we have seen that the Hardy-weights $W_{\alpha,\sigma}$ are critical Hardy-weights for $(-\Delta_{\mathbb{N}})^{\sigma}$ if $\sigma \in \left(0,1\right]$ and $\alpha \in (\sigma,\frac{3+2\sigma}{4}]$, and the corresponding Agmon ground state of $(-\Delta_{\mathbb{N}})^{\sigma}-W_{\alpha,\sigma}$ is $\mathcal{I}_{\alpha}$. Now we show that, for $\alpha=\frac{3+2\sigma}{4}$, the Hardy-weight $W_{\alpha,\sigma}$ is in fact null-critical.

\begin{proposition}\label{Prop:null-crit}
 Let $\sigma \in \left(0,1\right]$ and $\sigma<\alpha<\min\{3/2,1+\sigma\}$.   The function $\mathcal{I}_\alpha\in \ell^2(\mathbb{N}, W_{\alpha,\sigma})$ if and only if $\alpha<\frac{3+2\sigma}{4}$. In particular, for $\alpha=\frac{3+2\sigma}{4}$, the Hardy-weight $W_{\alpha,\sigma}$ of $(-\Delta_{\mathbb{N}})^{\sigma}$ is null-critical, i.e. $\mathcal{I}_{\alpha} \notin \ell^2(\mathbb{N},W_{\alpha,\sigma})$.
\end{proposition}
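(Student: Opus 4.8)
The plan is to reduce the question to a convergent-versus-divergent series criterion by inserting the explicit asymptotics computed earlier. By definition of the weighted $\ell^2$-norm,
\[
\|\mathcal{I}_\alpha\|_{\ell^2(\mathbb{N},W_{\alpha,\sigma})}^2 = \sum_{n\in\mathbb{N}} \mathcal{I}_\alpha(n)^2\, W_{\alpha,\sigma}(n) = \sum_{n\in\mathbb{N}} \mathcal{I}_\alpha(n)^2\,\frac{\mathcal{I}_{\alpha-\sigma}(n)}{\mathcal{I}_\alpha(n)} = \sum_{n\in\mathbb{N}} \mathcal{I}_\alpha(n)\,\mathcal{I}_{\alpha-\sigma}(n),
\]
using $W_{\alpha,\sigma} = \mathcal{I}_{\alpha-\sigma}/\mathcal{I}_\alpha$ from \eqref{weight}. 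So the whole problem collapses to deciding for which $\alpha$ the series $\sum_n \mathcal{I}_\alpha(n)\mathcal{I}_{\alpha-\sigma}(n)$ converges. Both factors are strictly positive by \eqref{riesz}, so there is no cancellation to worry about and the comparison test applies directly.

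Next I would plug in the asymptotics from \eqref{Eq:Asmp_I}: as $n\to\infty$, $\mathcal{I}_\alpha(n) \asymp n^{2\alpha-2}$ and $\mathcal{I}_{\alpha-\sigma}(n) \asymp n^{2(\alpha-\sigma)-2}$, both with positive constants depending on $\alpha$ and $\sigma$. Hence the general term satisfies
\[
\mathcal{I}_\alpha(n)\,\mathcal{I}_{\alpha-\sigma}(n) \asymp n^{(2\alpha-2) + (2\alpha-2\sigma-2)} = n^{4\alpha - 2\sigma - 4}.
\]
The series $\sum_n n^{4\alpha-2\sigma-4}$ converges if and only if $4\alpha - 2\sigma - 4 < -1$, i.e. $\alpha < \tfrac{3+2\sigma}{4}$, which is exactly the claimed dichotomy. (One should check that the hypotheses $\sigma<\alpha$ and $\alpha-\sigma<3/2$ keep both Riesz potentials in the regime where \eqref{Eq:Asmp_I} is valid, and that $\alpha<\min\{3/2,1+\sigma\}$ is compatible with $\alpha \le \tfrac{3+2\sigma}{4}$; indeed $\tfrac{3+2\sigma}{4}<\tfrac32$ and $\tfrac{3+2\sigma}{4}<1+\sigma$ for $\sigma\in(0,1]$, so the endpoint case is genuinely covered.) For the borderline value $\alpha = \tfrac{3+2\sigma}{4}$ the exponent equals $-1$ and the harmonic-type series diverges, so $\mathcal{I}_{\alpha}\notin\ell^2(\mathbb{N},W_{\alpha,\sigma})$.

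Finally I would assemble the conclusion: by Proposition \ref{Prop:crit} (for $\sigma\in(0,1)$) and Proposition \ref{prop:crit_laplacian} (for $\sigma=1$), the operator $(-\Delta_{\mathbb{N}})^\sigma - W_{\alpha,\sigma}$ with $\alpha=\tfrac{3+2\sigma}{4}$ is critical with Agmon ground state $\mathcal{I}_\alpha$; combined with $\mathcal{I}_\alpha\notin\ell^2(\mathbb{N},W_{\alpha,\sigma})$ just established, both defining conditions of an optimal Hardy-weight are met, so $W^{\mathrm{op}}_\sigma = W_{\tfrac{3+2\sigma}{4},\sigma}$ is optimal. I do not expect any serious obstacle here — the only mild care needed is to make sure the $\asymp$ estimate in \eqref{Eq:Asmp_I} is applied uniformly (it holds for all $n$, not merely asymptotically, per Remark \ref{Eq:Asymp_Riesz}), so that the series comparison is legitimate term by term rather than only in the tail; but even restricting to the tail suffices since finitely many positive terms never affect convergence.
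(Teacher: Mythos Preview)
Your proof is correct and follows essentially the same approach as the paper: both reduce the question to the convergence of $\sum_n \mathcal{I}_\alpha(n)^2 W_{\alpha,\sigma}(n)\asymp\sum_n n^{4\alpha-4-2\sigma}$ via the asymptotics in \eqref{Eq:Asmp_I} and \eqref{Eq:Asmp_W}, yielding the threshold $\alpha=\tfrac{3+2\sigma}{4}$. Your only cosmetic difference is rewriting $\mathcal{I}_\alpha^2 W_{\alpha,\sigma}=\mathcal{I}_\alpha\mathcal{I}_{\alpha-\sigma}$ before inserting asymptotics, and your final paragraph on optimality actually goes beyond the proposition itself (the paper defers that assembly to the proof of Theorem~\ref{Theorem:Hardy}).
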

\begin{proof}
First, recall that $W_{\alpha, \sigma}$ is a Hardy-weight by Proposition \ref{Prop:HW}. 
Using the asymptotic of $\mathcal{I}_{\alpha}$ and $W_{\alpha,\sigma}$ given by \eqref{Eq:Asmp_I} and \eqref{Eq:Asmp_W} resp., we see that
    \begin{equation*}
        \sum_{n\in\mathbb{N}}\mathcal{I}_\alpha^2(n)W_{\alpha,\sigma}(n)\asymp\sum_{n\in\mathbb{N}}n^{4\alpha-4-2\sigma}<+\infty
    \end{equation*}
    if and only if ${4\alpha-4-2\sigma}<-1\iff\alpha<\frac{3+2\sigma}{4}$.
\end{proof}
\subsection{A necessary condition for criticality.}
We have seen in Proposition \ref{Prop:crit} that $W_{\alpha,\sigma}$ are critical Hardy-weights when $0<\alpha\leq\frac{3+2\sigma}{4}$. We will also show that, in fact, this is also a necessary condition for criticality, i.e. the Hardy-weights $W_{\alpha,\sigma}$ are not critical for $\frac{3+2\sigma}{4}<\alpha<\min\{3/2,1+\sigma\}$. We follow an analogous strategy to the one shown in \cite{KN23}. First, we prove the following result on asymptotic optimality. The proof is completely analogous to Lemma 6 in \cite{KN23} and Theorem 4.3 in \cite{DKPM}. Nevertheless, we state and prove here the result for the sake of reading.

\begin{lemma}{(Optimality near infinity).}\label{Lemma:asympt-crit}
 Let $\sigma \in \left(0,1\right]$ and $\sigma<\alpha<\min\{3/2,1+\sigma\}$. Assume $W_{\alpha,\sigma}$ is a critical Hardy-weight of the form (\ref{weight}). If for a finite set $K\subseteq\mathbb{N}$ and $W':\mathbb{N}\rightarrow\left[0,\infty\right)$
\begin{equation*}
    \langle(-\Delta)^\sigma\varphi,\varphi\rangle\geq  \langle(W_{\alpha,\sigma}+W')\varphi,\varphi\rangle
\end{equation*}
for all $\varphi\in C_c(\mathbb{N}\setminus K)$, then $\mathcal{I}_\alpha\in\ell^2(\mathbb{N},W')$.

In particular, if we choose $W'=\lambda W_{\alpha,\sigma}$, we see that the weight is optimal near infinity if it is null-critical.
\end{lemma}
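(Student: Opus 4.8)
The plan is to run the standard ground-state-transformation argument adapted to the fractional operator $(-\Delta_\mathbb{N})^\sigma$, viewed as a graph Laplacian on $\mathbb{N}$ (legitimate since $\sigma\in(0,1]$). Recall from Proposition \ref{Prop:crit} / Proposition \ref{prop:crit_laplacian} that, under the stated hypotheses, $(-\Delta_\mathbb{N})^\sigma - W_{\alpha,\sigma}$ is critical with Agmon ground state $\mathcal{I}_\alpha$, and hence admits a null-sequence. By Proposition \ref{Prop:char_cri} and the subsequent remark, the pointwise limit of any null-sequence of $(-\Delta_\mathbb{N})^\sigma - W_{\alpha,\sigma}$ is a positive multiple of $\mathcal{I}_\alpha$. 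The idea is to feed such a null-sequence into the assumed inequality on $\mathbb{N}\setminus K$ and extract summability of $\mathcal{I}_\alpha$ against $W'$.

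First I would fix a null-sequence $(\phi_k)\subset C_c(\mathbb{N})$ for $(-\Delta_\mathbb{N})^\sigma - W_{\alpha,\sigma}$ with $\phi_k\to \mathcal{I}_\alpha$ pointwise and $\mathcal{Q}_{W_{\alpha,\sigma}}(\phi_k)\to 0$; in fact the explicit sequence $\varphi_k\mathcal{I}_\alpha$ built in the proof of Proposition \ref{Prop:crit} (for $\sigma<1$) serves, and for $\sigma=1$ one uses the analogous logarithmic cutoffs of $\mathcal{I}_\alpha$. Since $K$ is finite one may modify each $\phi_k$ on $K$ so that it is supported in $\mathbb{N}\setminus K$ without destroying $\mathcal{Q}_{W_{\alpha,\sigma}}(\phi_k)\to 0$ and while keeping $\phi_k\to\mathcal{I}_\alpha$ pointwise on $\mathbb{N}\setminus K$ (one simply excises the finite set; the excision changes the energy and the weighted norms by a bounded amount coming from the finitely many affected edges, which tends to $0$ after the $(\log k)^{-2}$ prefactor or is otherwise negligible). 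Applying the assumed inequality to these modified test functions gives
\begin{equation*}
  \mathcal{Q}_{W_{\alpha,\sigma}}(\phi_k)=\langle(-\Delta_\mathbb{N})^\sigma\phi_k,\phi_k\rangle-\langle W_{\alpha,\sigma}\phi_k,\phi_k\rangle\ \geq\ \langle W'\phi_k,\phi_k\rangle\ =\ \sum_{n\in\mathbb{N}}W'(n)\phi_k(n)^2\ \geq 0.
\end{equation*}
Letting $k\to\infty$, the left-hand side tends to $0$ while, by Fatou's lemma applied to the nonnegative terms $W'(n)\phi_k(n)^2$ (using $W'\geq 0$ and $\phi_k\to\mathcal{I}_\alpha$ pointwise), the right-hand side is bounded below by $\sum_{n}W'(n)\mathcal{I}_\alpha(n)^2$. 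Hence $\sum_n W'(n)\mathcal{I}_\alpha(n)^2\le 0$, so it equals $0$, and in particular $\mathcal{I}_\alpha\in\ell^2(\mathbb{N},W')$ (indeed with zero norm unless $W'$ is supported where $\mathcal{I}_\alpha$ could vanish, but $\mathcal{I}_\alpha>0$ everywhere, so actually $W'\equiv 0$ on $\mathbb{N}\setminus K$; the stated conclusion $\mathcal{I}_\alpha\in\ell^2(\mathbb{N},W')$ is all that is claimed).

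For the final assertion, take $W'=\lambda W_{\alpha,\sigma}$ with $\lambda\ge 0$. If $(-\Delta_\mathbb{N})^\sigma\ge (1+\lambda)W_{\alpha,\sigma}$ on $C_c(\mathbb{N}\setminus K)$, the lemma forces $\lambda\,\mathcal{I}_\alpha\in\ell^2(\mathbb{N},W_{\alpha,\sigma})$, i.e. $\lambda\,\|\mathcal{I}_\alpha\|^2_{\ell^2(\mathbb{N},W_{\alpha,\sigma})}<\infty$; but when $\alpha=\frac{3+2\sigma}{4}$ the null-criticality established in Proposition \ref{Prop:null-crit} says $\mathcal{I}_\alpha\notin\ell^2(\mathbb{N},W_{\alpha,\sigma})$, whence $\lambda=0$. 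This is exactly optimality near infinity of $W_\sigma^{\mathrm{op}}=W_{\frac{3+2\sigma}{4},\sigma}$, and in particular the implicit constant in $W_\sigma^{\mathrm{op}}$ cannot be enlarged. The main technical point — and the only place requiring care — is the excision step: one must check that restricting the null-sequence to $\mathbb{N}\setminus K$ leaves $\mathcal{Q}_{W_{\alpha,\sigma}}(\phi_k)\to 0$ intact, which follows because the simplified energy $Q^\sigma_\alpha$ from \eqref{Eq:SEF} is a sum of nonnegative edge terms, only finitely many edges touch $K$, and on those edges $\phi_k$ is bounded while $-\widetilde{\mathcal{K}}^\sigma_{m,n}\mathcal{I}_\alpha(n)\mathcal{I}_\alpha(m)$ is summable by the decay estimates \eqref{Eq:Asmp_K} and \eqref{Eq:Asmp_I}; combined with the cutoff normalization this contribution vanishes in the limit.
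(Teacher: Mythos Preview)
Your approach is the same as the paper's---take the null sequence, cut it off on $K$, apply the hypothesis, and pass to the limit with Fatou---but the excision step contains a genuine error. You assert that setting the null sequence to zero on $K$ still leaves $\mathcal{Q}_{W_{\alpha,\sigma}}(\tilde\phi_k)\to 0$, and you justify this by saying the extra contribution ``vanishes in the limit''. It does not. Write $\tilde\phi_k=\mathcal{I}_\alpha\tilde\varphi_k$ with $\tilde\varphi_k=\varphi_k\,1_{\mathbb{N}\setminus K}$; for $k$ large enough that $\varphi_k\equiv 1$ on $K$ one has $\tilde\varphi_k=\varphi_k-1_K$, and expanding the simplified energy gives
\[
Q^\sigma_\alpha(\tilde\varphi_k)=Q^\sigma_\alpha(\varphi_k)-2B_\alpha^\sigma(\varphi_k,1_K)+Q^\sigma_\alpha(1_K),
\]
where $B_\alpha^\sigma$ is the associated bilinear form. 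The cross term goes to $0$ by Cauchy--Schwarz, so $Q^\sigma_\alpha(\tilde\varphi_k)\to Q^\sigma_\alpha(1_K)$, which is a \emph{strictly positive} finite number (the graph is nonlocal, so infinitely many edges meet $K$, but the edge weights $-\widetilde{\mathcal K}^\sigma_{m,n}\mathcal I_\alpha(m)\mathcal I_\alpha(n)$ are summable since $\alpha<1+\sigma$). There is no $(\log k)^{-2}$ prefactor attached to this term---it is a fixed constant independent of $k$.

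Consequently your argument actually yields $\sum_{n\notin K}W'(n)\mathcal I_\alpha(n)^2\le Q^\sigma_\alpha(1_K)<\infty$, which is exactly the conclusion of the lemma, \emph{not} the equality $=0$ that you wrote. Your stated conclusion that $W'\equiv 0$ on $\mathbb{N}\setminus K$ is too strong and in fact false: removing any nonempty finite $K$ makes the critical operator $(-\Delta_\mathbb{N})^\sigma-W_{\alpha,\sigma}$ subcritical on $\mathbb{N}\setminus K$, so nontrivial $W'\ge 0$ satisfying the hypothesis do exist. The paper handles this correctly by using the crude bound $Q^\sigma_\alpha(\eta(1-1_K))\le 2Q^\sigma_\alpha(\eta)+2Q^\sigma_\alpha(1_K)$ and concluding only finiteness; once you drop the claim that the excised energy tends to zero and keep the bounded remainder, your proof becomes essentially identical to the paper's.
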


\begin{proof}
    The assumption gives us
    \begin{equation*}
        \langle W'\varphi,\varphi\rangle\leq \langle((-\Delta)^\sigma-W_{\alpha,\sigma})\varphi,\varphi\rangle=\left(Q^\sigma-W_{\sigma, \alpha}\right)\left(\varphi\right).
    \end{equation*}
    Let $\varphi=\mathcal{I}_\alpha\eta1_{\mathbb{N}\setminus K}$ with $\eta\in C_c(\mathbb{N})$ and $0\leq\eta\leq1$. By the ground state transform
    \begin{equation*}
        \sum_{n\in\mathbb{N}\setminus K}W'(n)(\mathcal{I}_\alpha\eta)^2(n)\leq \left(Q^\sigma-W_{\sigma, \alpha}\right)\left(\mathcal{I}_\alpha\eta1_{\mathbb{N}\setminus K}\right)=Q_{-\alpha}^\sigma(\eta(1-1_K))\leq 2Q_{-\alpha}^\sigma(\eta)+2Q_{-\alpha}^\sigma(1_K)
    \end{equation*}
    Now we replace $\eta$ by a null-sequence which exists since we assumed criticality (Proposition \ref{Prop:char_cri}). Then by Fatou's lemma we get
    \begin{equation*}
         \sum_{n\in\mathbb{N}\setminus K}W'(n)(\mathcal{I}_\alpha)^2(n)<\infty.
    \end{equation*}
    The statement about null-criticality comes directly by choosing $W'=\lambda W_{\alpha,\sigma}$
\end{proof}

Now we see that, if we assume that $W_{\alpha,\sigma}$ are critical for $\frac{3+2\sigma}{2}<\alpha<\min\{3/2,1+\sigma\}$, then Lemma \ref{Lemma:asympt-crit} leads to a contradiction of the null criticality described in Proposition \ref{Prop:null-crit}.

\begin{proposition}\label{Prop:necess}
    Let $0<\sigma\leq1$, $\sigma<\alpha<1+\sigma$, and $\frac{3+2\sigma}{4}<\alpha<3/2$. Then the Hardy-weights $W_{\alpha,\sigma}$ are not critical.
\end{proposition}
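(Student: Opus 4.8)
The plan is to argue by contradiction, combining the two previous results: the optimality-near-infinity lemma (Lemma~\ref{Lemma:asympt-crit}) and the null-criticality computation (Proposition~\ref{Prop:null-crit}). Suppose, for some fixed $\sigma\in(0,1]$ and some $\alpha$ with $\frac{3+2\sigma}{4}<\alpha<\min\{3/2,1+\sigma\}$, that the Hardy-weight $W_{\alpha,\sigma}$ is critical for $(-\Delta_{\mathbb{N}})^\sigma$. The key structural input is that, by Proposition~\ref{Prop:null-crit}, for this same range of $\alpha$ the Agmon ground state $\mathcal{I}_\alpha$ of $(-\Delta_{\mathbb{N}})^\sigma - W_{\alpha,\sigma}$ already fails to lie in $\ell^2(\mathbb{N},W_{\alpha,\sigma})$, because $4\alpha-4-2\sigma\ge -1$ exactly when $\alpha\ge\frac{3+2\sigma}{4}$. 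So null-criticality holds automatically in this regime; the task is to show that criticality is incompatible with the size of $W_{\alpha,\sigma}$.

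First I would apply Lemma~\ref{Lemma:asympt-crit} with the choice $W'=\lambda W_{\alpha,\sigma}$ for an arbitrary $\lambda>0$. The hypothesis of the lemma requires checking that
\begin{equation*}
\langle (-\Delta_{\mathbb{N}})^\sigma \varphi,\varphi\rangle_{\ell^2(\mathbb{N})}\ \geq\ (1+\lambda)\langle W_{\alpha,\sigma}\varphi,\varphi\rangle_{\ell^2(\mathbb{N})}
\end{equation*}
for all $\varphi\in C_c(\mathbb{N}\setminus K)$ for some finite $K$; that is, one needs to produce a larger Hardy-weight on a neighborhood of infinity. This is where I expect the main obstacle to lie: one must exhibit a positive supersolution of $((-\Delta_{\mathbb{N}})^\sigma-(1+\lambda)W_{\alpha,\sigma})[\varphi]=0$ away from a finite set, or equivalently invoke a localized version of the Agmon--Allegretto--Piepenbrink principle (Proposition~\ref{Prop:AAP}). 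The natural candidate is a positive combination of the Riesz potentials $\mathcal{I}_\beta$ for $\beta$ slightly larger than $\alpha$ (still in the admissible range), exploiting $(-\Delta_{\mathbb{N}})^\sigma\mathcal{I}_\beta=\mathcal{I}_{\beta-\sigma}$ from Proposition~\ref{Prop:Imp} together with the asymptotics $\mathcal{I}_\beta(n)\asymp n^{2\beta-2}$; since $W_{\alpha,\sigma}\asymp n^{-2\sigma}$ and the comparison $\mathcal{I}_{\beta-\sigma}/\mathcal{I}_\alpha$ versus $(1+\lambda)\mathcal{I}_{\alpha-\sigma}/\mathcal{I}_\alpha$ becomes favorable near infinity, such a supersolution should exist on $\mathbb{N}\setminus K$ for $K$ large enough depending on $\lambda$. (Alternatively, one may follow the route of \cite{KN23} directly, perturbing $\mathcal{I}_\alpha$ and reducing to an elementary asymptotic inequality among Gamma quotients.)

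Granting that, Lemma~\ref{Lemma:asympt-crit} yields $\mathcal{I}_\alpha\in\ell^2(\mathbb{N},\lambda W_{\alpha,\sigma})=\ell^2(\mathbb{N},W_{\alpha,\sigma})$. But this directly contradicts Proposition~\ref{Prop:null-crit}, which asserts $\mathcal{I}_\alpha\notin\ell^2(\mathbb{N},W_{\alpha,\sigma})$ whenever $\alpha\ge\frac{3+2\sigma}{4}$. Hence $W_{\alpha,\sigma}$ cannot be critical for $\frac{3+2\sigma}{4}<\alpha<\min\{3/2,1+\sigma\}$, which together with Proposition~\ref{Prop:crit}, Proposition~\ref{prop:crit_laplacian} and Proposition~\ref{Prop:null-crit} completes the proof of Theorem~\ref{Theorem:Hardy}: criticality holds precisely for $\alpha\le\frac{3+2\sigma}{4}$, and at the endpoint $W_\sigma^{\mathrm{op}}=W_{\frac{3+2\sigma}{4},\sigma}$ is both critical and null-critical, hence optimal.
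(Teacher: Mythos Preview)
Your overall architecture matches the paper's: argue by contradiction, invoke Lemma~\ref{Lemma:asympt-crit} with $W'=\lambda W_{\alpha,\sigma}$, and reach a contradiction with Proposition~\ref{Prop:null-crit}. The difficulty is exactly where you flag it, namely verifying the hypothesis of Lemma~\ref{Lemma:asympt-crit} that $(1+\lambda)W_{\alpha,\sigma}$ is dominated by a Hardy-weight outside some finite $K$.

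Your sketch for this step has a genuine gap. You propose using $\mathcal{I}_\beta$ with $\beta$ \emph{slightly larger} than $\alpha$; but the resulting weight $W_{\beta,\sigma}=\mathcal{I}_{\beta-\sigma}/\mathcal{I}_\beta$ has the \emph{same} decay $n^{-2\sigma}$ as $W_{\alpha,\sigma}$ (Remark~\ref{Rmk:weight-asympt}), so no asymptotic domination can come from the growth rate --- only from the leading constant $\Psi_\sigma(\beta)=4^\sigma\frac{\Gamma(3/2-\beta+\sigma)\Gamma(\beta)}{\Gamma(\beta-\sigma)\Gamma(3/2-\beta)}$. And that constant is \emph{strictly decreasing} on $(\frac{3+2\sigma}{4},\frac{3}{2})$, so moving $\beta$ above $\alpha$ makes it smaller, not larger, and the inequality you need fails. (Your displayed comparison ``$\mathcal{I}_{\beta-\sigma}/\mathcal{I}_\alpha$ versus $(1+\lambda)\mathcal{I}_{\alpha-\sigma}/\mathcal{I}_\alpha$'' also has the wrong denominator: the supersolution condition for $\mathcal{I}_\beta$ reads $\mathcal{I}_{\beta-\sigma}/\mathcal{I}_\beta\ge(1+\lambda)W_{\alpha,\sigma}$.)

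The paper makes precisely this constant analysis the heart of the proof: it computes $\Psi_\sigma'(\alpha)$ and shows it is negative on $(\frac{3+2\sigma}{4},\frac{3}{2})$ via the monotonicity of the trigamma function $\psi^{(1)}$, so that $C_\sigma:=\Psi_\sigma(\frac{3+2\sigma}{4})>\Psi_\sigma(\alpha)$. One then picks $\lambda>0$ with $C_\sigma>(1+\lambda)\Psi_\sigma(\alpha)$ and uses the \emph{global} Hardy inequality $\langle(-\Delta_{\mathbb{N}})^\sigma\varphi,\varphi\rangle\ge\langle W_{\frac{3+2\sigma}{4},\sigma}\,\varphi,\varphi\rangle$ from Proposition~\ref{Prop:HW}, together with the pointwise comparison $W_{\frac{3+2\sigma}{4},\sigma}\ge(1+\lambda)W_{\alpha,\sigma}$ outside a finite $K$. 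No localized Agmon--Allegretto--Piepenbrink principle or new supersolution is needed: the correct comparison parameter is $\beta=\frac{3+2\sigma}{4}<\alpha$, not $\beta>\alpha$. Once you reverse the direction of $\beta$ and supply the monotonicity of $\Psi_\sigma$, your argument becomes the paper's.
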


\begin{proof}
    As can be observed from (\ref{Eq:Asmp_W}), all weights $W_{\alpha,\sigma}$ share the same asymptotic decay, $n^{-2\sigma}$, but differ in the constants, which are given by 
    \begin{equation*}
        \Psi_\sigma(\alpha)=4^\sigma\frac{\Gamma(\frac{3}{2}-\alpha+\sigma)\Gamma(\alpha)}{\Gamma(\alpha-\sigma)\Gamma(\frac{3}{2}-\alpha)}.
    \end{equation*}

    To study the monotonicity of $\Psi_\sigma(\alpha)$ with respect to $\alpha$, we compute $\Psi'_\sigma(\alpha)$

    \begin{equation*}
        \Psi'_\sigma(\alpha)=4^\sigma\frac{\Gamma(\frac{3}{2}-\alpha+\sigma)\Gamma(\alpha)}{\Gamma(\alpha-\sigma)\Gamma(\frac{3}{2}-\alpha)}[\psi^{(0)}(\alpha)-\psi^{(0)}(\alpha-\sigma)-\psi^{(0)}(\frac{3}{2}-\alpha+\sigma)+\psi^{(0)}(\frac{3}{2}-\alpha)].
    \end{equation*}    
    using $\Gamma'(x)=\Gamma(x)\psi^{(0)}(x)$. We can write the terms inside of the brackets as
    \begin{equation*}
        \int_{\alpha-\sigma}^{\alpha}[\psi^{(1)}(t)+\psi^{(1)}(3/2-t)]\ \mathrm{d}t= \int_{\alpha-\sigma}^{\alpha}\psi^{(1)}(t)\ \mathrm{d}t - \int_{3/2-\alpha}^{3/2-\alpha+\sigma}\psi^{(1)}(t)\ \mathrm{d}t.
    \end{equation*}
    Since $\psi^{(1)}$ is a strictly decreasing positive function on the positive real axis, both intervals of integration are of measure $\sigma$ and by assumption $\alpha-\sigma>3/2-\alpha$, then $ \Psi'_\sigma(\alpha)<0$ in $(\frac{3+2\sigma}{4},\frac{3}{2})$. Therefore, $\Psi_\sigma(\alpha)$ is strictly decreasing in that interval and $\Psi_{\sigma,3}(\frac{3+2\sigma}{4})=4^{\sigma}\frac{\Gamma\left(\frac{3+2\sigma}{4}\right)^2}{\Gamma\left(\frac{3-2\sigma}{4}\right)^2}=:C_\sigma$.    
    Now we argue by contradiction. Assume $W_{\alpha,\sigma}$ is critical for some $\alpha>\frac{3+2\sigma}{4}$. By the above analysis of $ \Psi_\sigma$ we can choose a positive $\lambda$ such that
    \begin{equation*}
        \frac{C_\sigma - \Psi_\sigma(\alpha)}{\Psi_\sigma(\alpha)}>\lambda\implies C_\sigma>(1+\lambda)\Psi_\sigma(\alpha).
    \end{equation*}
    As the asymptotic behavior of the weights differs only in their constants, we can choose a finite set $K\subseteq\mathbb{N}$ such that outside of $K$ we get
    \begin{equation*}
        W_{\sigma,\frac{3+2\sigma}{4}}\geq (1+\lambda)W_{\alpha,\sigma}
    \end{equation*}
    Since $\mathcal{I}_{\alpha}$ is the unique ground state of $(Q^\sigma-W_{\alpha,\sigma})$, the weight $W'=\lambda W_{\alpha,\sigma}$ fulfills the hypotheses of Lemma \ref{Lemma:asympt-crit}, so we conclude 
    \begin{equation*}
        \mathcal{I}_{\alpha}\in\ell^2(\mathbb{N},W_{\alpha,\sigma}).
    \end{equation*}
    However this contradicts Proposition \ref{Prop:null-crit}.
\end{proof}
\subsection{Proof of Theorem \ref{Theorem:Hardy}.}
We conclude the on-going section with the proof of Theorem \ref{Theorem:Hardy}.

\begin{proof}
Proposition \ref{Prop:HW} shows that $W_{\alpha,\sigma}$ is a Hardy-weight for $(-\Delta)^{\sigma}$ if $\sigma \in \left(0,1\right]$, $0<\alpha-\sigma<\frac{3}{2} $ and $\alpha\in(\sigma,1+\sigma)$. Propositions \ref{Prop:crit} and \ref{Prop:necess} ensure that $(-\Delta)^{\sigma}-W_{\alpha,\sigma}$ is critical if and only if $\alpha \in \left(\sigma,\frac{3+2\sigma}{4}\right]$. Finally, Proposition \ref{Prop:null-crit}, implies that the Hardy-weight $W_{\alpha,\sigma}$ of $(-\Delta)^{\sigma}$ is null-critical if $\alpha=\frac{3+2\sigma}{4}$. Hence,  we prove the optimality of the Hardy-weight $W_{\frac{3+2\sigma}{4},\sigma}$ for $(-\Delta)^{\sigma}$.    
\end{proof}

\section{An application to unique continuation at infinity} \label{Sec:UCP}
As an immediate application to the above study of optimal Hardy-weights, here we derive some unique continuation results as stated in Theorem \ref{Thm:UCP}.

We define $\mathcal{G}_{\sigma}= \sqrt{2/\pi}  \mathcal{I}_{\sigma}$. 
Then, for $\sigma \in {(0,\frac{3}{2})}$, \begin{align*} 
    \mathcal{G}_{\sigma} (n) &={\sqrt{\frac{2}{\pi}} }\langle \mathcal{U} e_n,[2^{-\sigma}[1-\cdot]^{-\sigma}]\rangle_{L^2((-1,1),\sqrt{1-x^2}\mathrm{d} x)} \\
  & = \frac{2}{\pi} \int_{-1}^1 \frac{U_{n-1}}{2^{\sigma}[1-x]^{\sigma}} \sqrt{1-x^2} \ dx \\
  & = \frac{2}{\pi} \lim_{\lambda \rightarrow 0_+} \int_{-1}^1 \frac{U_{n-1}}{2^{\sigma}[1-x+\lambda]^{\sigma}} \sqrt{1-x^2} \ dx =  \lim_{\lambda \rightarrow 0_+} [(-\Delta_{\mathbb{N}})^{\sigma}+\lambda]^{-1}_{1,n} \,,
\end{align*}
where the third equality uses the dominated convergence theorem and the fourth one follows from \cite[Corollary 2.2]{GKS25}. Restricting $\sigma \in (0,1]$, it follows from \cite[Theorem 6.26]{KLW21} that $(-\Delta_{\mathbb{N}})^{\sigma} \mathcal{G}_{\sigma} = \delta_{1,\cdot}$ and $\mathcal{G}_{\sigma}$ is  the smallest positive $u \in \mathcal{F}_{\mathbb{N}}^{\sigma}$ such that  $(-\Delta_{\mathbb{N}})^{\sigma}u \geq  \delta_{1,\cdot}$ on $\mathbb{N}$.  Hence, $\mathcal{G}_{\sigma}$ is a positive  minimal Green function of $(-\Delta_{\mathbb{N}})^{\sigma}$, i.e., for any $v>0$ with  $(-\Delta_{\mathbb{N}})^{\sigma}v \geq  0$ outside a finite set in $\mathbb{N}$, there exists $C>0$ such that $\mathcal{G}_{\sigma} \leq C v$ on $\mathbb{N}$.

\begin{proof}[Proof of Theorem \ref{Thm:UCP}]
The proof follows from \cite[Theorem 3.1]{DKP24}. We give the details for reader's convenience. Let $ u $ be a solution of $\left[ (-\Delta_{\mathbb{N}})^{\sigma} + V\right] \varphi =0$ on $\mathbb{N}$, and assume without loss of generality $ u_{+}\neq 0 $ (otherwise consider $ -u$). Then, by \cite[Lemma 3.2]{DKP24}, we have
\begin{align*}
	\left[ (-\Delta_{\mathbb{N}})^{\sigma} + V\right] u_{+} \le 0 
\end{align*}
on $\mathbb{N}$. Now, in the Liouville comparison principle (Theorem \ref{Thrm_Liouville}), we take $\mathcal{H}_{W'}= (-\Delta_{\mathbb{N}})^{\sigma} - W^{\mathrm{op}}_{\sigma}$ and $\mathcal{H}_{W}= (-\Delta_{\mathbb{N}})^{\sigma} +V$, where $W^{\mathrm{op}}_{\sigma}$ is the optimal Hardy-weight of $(-\Delta_{\mathbb{N}})^{\sigma}$ given in Theorem \ref{Theorem:Hardy}. Then, due to the first decay assumption on $u$, it is not difficult to verify that the hypothesis of Theorem \ref{Thrm_Liouville} are satisfied. Hence, we can infer that $\mathcal{H}_{W}= (-\Delta_{\mathbb{N}})^{\sigma} +V$ is in fact critical and $u=u_+>0$ is the corresponding Agmon ground state. Since $u>0$ and $V \leq 0$, it follows that
$$(-\Delta_{\mathbb{N}})^{\sigma} u \geq \left[(-\Delta_{\mathbb{N}})^{\sigma} +V \right] u =0 $$
outside a finite set in $\mathbb{N}$. Since $\mathcal{G}_{\sigma}$ is a positive minimal Green function of $(-\Delta_{\mathbb{N}})^{\sigma}$, it follows that there exists $C>0$ such that $\mathcal{G}_{\sigma} \leq C u$ on $\mathbb{N}$. As $\mathcal{G}_{\sigma}$ has the asymptotics $\mathcal{G}_{\sigma}(n) \asymp n^{2\sigma -2}$ for large $n$, we get a contradiction to our second decay assumption. Therefore, $u \equiv 0$.  
\end{proof}

\begin{rmk}
\rm There exists $C>0$ such that $(-\Delta_{\mathbb{N}})^{\sigma} - C \delta_{1,\cdot}$ is critical in $\mathbb{N}$ \cite[Lemma 4.4.]{DKPM}. Let $u$ be its Agmon ground state. In particular, $\left[(-\Delta_{\mathbb{N}})^{\sigma} - C \delta_{1,\cdot}\right] u =0$ in $\mathbb{N}$. Moreover, it is not difficult to see that $u \asymp \mathcal{G}_{\sigma}$ outside a finite set (as both are positive solutions of minimal growth at infinity of the same equation). This shows that the decay assumption in Theorem \ref{Thm:UCP} is sharp.   
\end{rmk}

\section{Final Remarks}\label{Sec:Final-Rem}

In Theorem \ref{Theorem:Hardy}, we provide an optimal Hardy-weight for the fractional Laplacian $(-\Delta_{\mathbb{N}})^{\sigma}$ over $\mathbb{N}$ for the exponents $\sigma\in\left(0,1\right]$. As a simple consequence (using Theorem \ref{Theorem:Hardy} and \eqref{eq:asympga}), we obtain the largest value of $\gamma$ such that \eqref{Eq:HW} describes a Hardy-weight for $(-\Delta_\mathbb{N})^\sigma$. From Theorem \ref{Theorem:Hardy} and Remark \ref{Rmk:weight-asympt}, we have
$$W^{\mathrm{op}}_{\sigma} \sim 4^{\sigma}\frac{\Gamma\left(\frac{3+2\sigma}{4}\right)^2}{\Gamma\left(\frac{3-2\sigma}{4}\right)^2} n^{-2\sigma} \,.$$
Further, the following is also true
\begin{lemma}\label{lemm_ct}
    Let $n\in\mathbb{N}$, $0<\sigma<1$. Then
    \begin{equation}
        W(n)=\frac{\Gamma(n-\frac{1+2\sigma}{4})\Gamma(n+\frac{5-2\sigma}{4})}{\Gamma(n+\frac{-1+2\sigma}{4})\Gamma(n+\frac{5+2\sigma}{4})}\geq n^{-2\sigma}.
    \end{equation}
\end{lemma}
We write the proof in Appendix \ref{app_ct}. This lemma implies that $\gamma_H:=4^{\sigma}\frac{\Gamma\left(\frac{3+2\sigma}{4}\right)^2}{\Gamma\left(\frac{3-2\sigma}{4}\right)^2}$ is the largest possible $\gamma$ such that the Hardy inequality $(-\Delta_{\mathbb{N}})^{\sigma}-\gamma |n|^{-2\sigma} \geq 0$ holds on $\mathbb{N}$.

In this context, it is relevant to also point out that the best Hardy constant $\gamma_H$ coincides with the best Hardy constant at infinity, which is defined by $$ \gamma_{\rm{H}}^{\infty}= \sup_{K \Subset \mathbb{N}}\inf \left \{\frac{(-\Delta_{\mathbb{N}})^{\sigma}  f,f\rangle_{\ell^2(\mathbb{N})}}{\langle f,f\rangle_{\ell^2(\mathbb{N},|n|^{-2\sigma})}}\,, f \in C_c(\mathbb{N}\setminus K)  \right\} \,.
$$
 In fact, $\gamma_{\rm{H}}^{\infty}$ is the bottom of the (essential) spectrum of the weighted operator $|n|^{2\sigma}(-\Delta_{\mathbb{N}})^{\sigma}$ in $\ell^2(\mathbb{N},|n|^{-2\sigma})$. The constant $\gamma_{\rm{H}}$ plays a significant role in the analysis of Hardy-type inequalities; for instance, see \cite{DDP} and the references therein.

Another interesting remark is that the optimal weight $W_{1}^{\text{op}}$ we find in Theorem \ref{Theorem:Hardy} for $\sigma=1$ is different from the one obtained in \cite[Theorem 7.3]{KPP18}, and the ones obtained in \cite[Corollary 12]{KLS}, all of them are also optimal. Recall that
\begin{equation}
    W^{\operatorname{KPP}}(n)=2-\sqrt{1+\frac{1}{n}}-\sqrt{1-\frac{1}{n}}=\frac{1}{4n^2}+\frac{5}{64n^4}+O(n^{-6})
\end{equation}
\begin{equation}
    W_{1}^{\text{op}}(n)=\frac{1}{4}\frac{1}{n^2-\frac{9}{16}}=\frac{1}{4n^2}+\frac{9}{64n^4}+O(n^{-6})\,.
\end{equation}
Note that the weights are not comparable as $W^{\operatorname{KPP}}(1)>W_{1}^{\text{op}}(1)$ and $W_{1}^{\text{op}}(n)> W^{\operatorname{KPP}}(n)$\footnote{ Of course, this implies that $W^{\mathrm{op}}_1$ and $W^{\mathrm{KPP}}$ must intersect each other if we consider them as functions on $\mathbb{R}_+$. It can be checked with the help of a symbolic computation software that the intersecting point is in between 1 and 2.} for large $n$. This fact highlights the non-uniqueness of optimal Hardy-weights, and raises the question of finding the complete class of optimal Hardy-weights.
Even though $ W_{1}^{\text{op}}$ is asymptotically larger than $W^{\operatorname{KPP}}$ at order $n^{-4}$, it does not conflict with the fact that $W^{\operatorname{KPP}}$ is optimal at infinity as well, since Remark \ref{Rmk:Asympt-Optimal} remains true for both weights.

Finally, it is surprising that the optimal constant $C_{\sigma}$ appearing in the discrete fractional Hardy inequality on the discrete half-line does not coincide with its continuum counterpart obtained in \cite{BD}: 
\begin{equation*}
    \gamma_{\operatorname{BD}}=\frac{2\Gamma(1-\sigma)\Gamma(\sigma+1/2)-4^\sigma}{4^\sigma\sigma \sqrt{\pi}}.
\end{equation*}
This contrasts with the full-line case, where the best constant in the Hardy inequality for discrete fractional Laplacian is equal to the one corresponding to the continuum case (see \cite{KN23} and \cite{Herbst} respectively).

{\textbf{Open Question.}}  We  know from \cite{GKS25} that $(-\Delta_\mathbb{N})^\sigma$ is subcritical for $\sigma\in(0,3/2)$, but we obtain optimal Hardy-weights for $\sigma\in(0,1]$.  So the next natural question is to find an optimal Hardy-weight for the remaining exponents in the range of subcriticality. The main obstacle we face when using our approach is that the operator no longer behaves as a graph Laplacian, since the kernel $-\widetilde{\mathcal{K}}^\sigma_{m,n}$ is no longer nonnegative for $\sigma>1$. 
A different strategy is thus needed in order to tackle this problem for $\sigma\in(1,3/2)$.

\appendix
\section{Computations}{\label{comp}}
This appendix is devoted to proving several facts about Gamma functions and binomial coefficients to aid us in our computations. See \cite{AR} for general properties of Gamma functions. 
\begin{lemma}
    \label{LR18-comp}
    Let $\sigma\in\mathbb{R}\setminus\mathbb{Z}$ and $n\in\mathbb{N}$. Then: 
    \begin{align}
        \mathcal{K}^\sigma_{m,n}&=(-1)\left(\frac{(-1)^{m-n+1}\Gamma(2\sigma+1)}{\Gamma(\sigma+m-n+1)\Gamma(\sigma-m+n+1)}-\frac{(-1)^{m+n+1}\Gamma(2\sigma+1)}{\Gamma(\sigma+m+n+1)\Gamma(\sigma-m-n+1)}\right)\\
        &=\frac{(-1)\Gamma(\sigma + 1/2)4^\sigma}{\sqrt{\pi}\Gamma(-\sigma)}\left(\frac{\Gamma(m+n-\sigma)}{\Gamma(m+n+\sigma+1)}-\frac{\Gamma(m-n-\sigma)}{\Gamma(m-n+\sigma+1)}\right)
    \end{align}
\end{lemma}
\begin{proof}
    Define the functions $F(n)$ and $H(n)$ as
    \begin{equation*}
        F(n):=\frac{(-1)^{n}\Gamma(2\sigma+1)}{\Gamma(\sigma+n+1)\Gamma(\sigma-n+1)}\ \ ; \ \ H(n):= \frac{(-1)\Gamma(\sigma + 1/2)4^\sigma}{\sqrt{\pi}\Gamma(-\sigma)}\frac{\Gamma(n-\sigma)}{\Gamma(n+\sigma+1)}.
    \end{equation*}
    By \cite[Proposition 1]{LR18} we have $-F(n)=H(n)$. Note that, in their notation, $F\equiv K^\sigma$ and $H\equiv R^\sigma$. Also note that although \cite[Proposition 1]{LR18} is only proved for $\sigma\in (0,1)$ it can easily be extended to $\mathbb{R}\setminus\mathbb{Z}$ by not using $\Gamma(-\sigma)=-|\Gamma(-\sigma)|$. The rest of their proof only uses Legendre's duplication formula and Euler’s reflection formula.
    
    Without loss of generality, consider $m>n$. We can write 
    \begin{equation*}
         \mathcal{K}^\sigma_{m,n}=F(m-n)-F(m+n)=H(m+n)-H(m-n),
    \end{equation*}
    which completes the proof
\end{proof}

\begin{lemma}
\label{sum1}
    Let $\sigma\in\mathbb{R}$ and $n\in\mathbb{N}$. Then:
    \begin{equation}
        \sum_{m=0}^{n-1}(-1)^{m}\left(\begin{matrix} 2\sigma\\\sigma+m\end{matrix}\right)=\frac{\Gamma(2\sigma)}{\Gamma(\sigma)\Gamma(\sigma+1)}+(-1)^{n-1}\frac{\Gamma(2\sigma)}{\Gamma(\sigma+n)\Gamma(\sigma-n+1)}
    \end{equation}
\end{lemma}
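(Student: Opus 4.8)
The plan is to prove the identity by exhibiting an explicit antidifference for the summand and telescoping. Write, using the paper's convention for binomial coefficients,
\[
a_m := (-1)^m\binom{2\sigma}{\sigma+m} = (-1)^m\frac{\Gamma(2\sigma+1)}{\Gamma(\sigma+m+1)\,\Gamma(\sigma-m+1)},
\qquad
b_m := (-1)^{m-1}\frac{\Gamma(2\sigma)}{\Gamma(\sigma+m)\,\Gamma(\sigma-m+1)}.
\]
First I would show that $a_m = b_{m+1}-b_m$ for every $m$. This is a one-line computation: writing $b_{m+1}$ and $b_m$ over the common denominator $\Gamma(\sigma+m+1)\Gamma(\sigma-m+1)$ and using the functional equation $\Gamma(x+1)=x\Gamma(x)$ (in the forms $\Gamma(\sigma+m+1)=(\sigma+m)\Gamma(\sigma+m)$ and $\Gamma(\sigma-m+1)=(\sigma-m)\Gamma(\sigma-m)$) gives
\[
b_{m+1}-b_m
= (-1)^m\,\Gamma(2\sigma)\,\frac{(\sigma-m)+(\sigma+m)}{\Gamma(\sigma+m+1)\Gamma(\sigma-m+1)}
= (-1)^m\frac{2\sigma\,\Gamma(2\sigma)}{\Gamma(\sigma+m+1)\Gamma(\sigma-m+1)}
= a_m,
\]
where the last step uses $2\sigma\,\Gamma(2\sigma)=\Gamma(2\sigma+1)$.

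Once the antidifference is established, the sum telescopes:
\[
\sum_{m=0}^{n-1} a_m = b_n - b_0
= (-1)^{n-1}\frac{\Gamma(2\sigma)}{\Gamma(\sigma+n)\Gamma(\sigma-n+1)}
- \left(-\frac{\Gamma(2\sigma)}{\Gamma(\sigma)\Gamma(\sigma+1)}\right),
\]
since $b_0 = (-1)^{-1}\Gamma(2\sigma)/\big(\Gamma(\sigma)\Gamma(\sigma+1)\big)$, and this is precisely the claimed expression.

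Regarding technical care: the statement should be read as an identity of meromorphic functions of $\sigma$, with the usual convention that $1/\Gamma$ is entire (so reciprocals of $\Gamma$ at its poles are $0$); the functional equation used above is valid in this setting, and the exceptional values of $\sigma$ (for instance those at which $\Gamma(2\sigma+1)$ has a pole) are covered by analytic continuation, while for the values of $\sigma$ actually relevant to the paper ($\sigma\in(0,1]$ and shifts thereof) all expressions are finite. I do not expect any genuine obstacle here: the only ``trick'' is guessing $b_m$, which one simply reads off from the target formula. Alternatively, the statement admits a routine proof by induction on $n$, the inductive step reducing, after clearing denominators via $\Gamma(x+1)=x\Gamma(x)$, to the trivial identity $-(\sigma+n)+2\sigma=\sigma-n$.
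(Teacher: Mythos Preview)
Your proof is correct. The paper proves the identity by induction on $n$ (which you mention as an alternative at the end), whereas your main argument extracts the antidifference $b_m$ explicitly and telescopes; the two are essentially the same computation, since the paper's inductive step amounts precisely to verifying $b_{m+1}-b_m=a_m$, but your packaging is a little cleaner in that it dispenses with a separate base case.
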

\begin{proof}
    By induction, for $n=1$:
    \begin{equation*}
        \left(\begin{matrix} 2\sigma\\\sigma\end{matrix}\right)=\frac{\Gamma(2\sigma+1)}{\Gamma(\sigma+1)\Gamma(\sigma+1)}=\frac{2 \sigma \Gamma(2\sigma)}{\sigma \Gamma(\sigma)\Gamma(\sigma+1)}=\frac{2  \Gamma(2\sigma)}{\Gamma(\sigma)\Gamma(\sigma+1)}.
    \end{equation*}
    Now assume
    \begin{equation*}
         \sum_{m=0}^{n-2}(-1)^{m}\left(\begin{matrix} 2\sigma\\\sigma+m\end{matrix}\right)=\frac{\Gamma(2\sigma)}{\Gamma(\sigma)\Gamma(\sigma+1)}+(-1)^{n-2}\frac{\Gamma(2\sigma)}{\Gamma(\sigma-n+2)\Gamma(\sigma+n-1)},
    \end{equation*}
    then
    \begin{align*}
         &\sum_{m=0}^{n-1}(-1)^{m}\left(\begin{matrix} 2\sigma\\\sigma+m\end{matrix}\right)\\
         &=\frac{\Gamma(2\sigma)}{\Gamma(\sigma)\Gamma(\sigma+1)}+(-1)^{n-2}\frac{\Gamma(2\sigma)}{\Gamma(\sigma-n+2)\Gamma(\sigma+n-1)}+(-1)^{n-1}\frac{\Gamma(2\sigma+1)}{\Gamma(\sigma+n)\Gamma(\sigma-n+2)}\\
         &=\frac{\Gamma(2\sigma)}{\Gamma(\sigma)\Gamma(\sigma+1)}+(-1)^{n-2}\frac{\Gamma(2\sigma)(\sigma+n-1)}{\Gamma(\sigma-n+2)\Gamma(\sigma+n)}+(-1)^{n-1}\frac{2\sigma\Gamma(2\sigma)}{\Gamma(\sigma+n)\Gamma(\sigma-n+2)}\\
         &=\frac{\Gamma(2\sigma)}{\Gamma(\sigma)\Gamma(\sigma+1)}+(-1)^{n-1}\frac{\Gamma(2\sigma)}{\Gamma(\sigma+n)\Gamma(\sigma-n+2)}\left(\sigma-n+1\right)\\
         &=\frac{\Gamma(2\sigma)}{\Gamma(\sigma)\Gamma(\sigma+1)}+(-1)^{n-1}\frac{\Gamma(2\sigma)}{\Gamma(\sigma+n)\Gamma(\sigma-n+1)},
    \end{align*}
    as desired.
\end{proof}

\begin{lemma}
\label{sum2}
    Let $\sigma\in\mathbb{R}$ and $n\in\mathbb{N}$. Then:
    \begin{equation}
        \sum_{m=0}^{n-1}(-1)^{m+1}\left(\begin{matrix} 2\sigma\\\sigma+m+1\end{matrix}\right)=-\frac{\Gamma(2\sigma)}{\Gamma(\sigma)\Gamma(\sigma+1)}+(-1)^{n}\frac{\Gamma(2\sigma)}{\Gamma(\sigma-n)\Gamma(\sigma+n+1)}.
    \end{equation}
\end{lemma}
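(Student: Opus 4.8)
The plan is to prove Lemma \ref{sum2} by reducing it to Lemma \ref{sum1}, which has already been established, via a simple index shift. Writing $k=m+1$ in the sum on the left-hand side, we have
\begin{equation*}
\sum_{m=0}^{n-1}(-1)^{m+1}\left(\begin{matrix} 2\sigma\\\sigma+m+1\end{matrix}\right)=\sum_{k=1}^{n}(-1)^{k}\left(\begin{matrix} 2\sigma\\\sigma+k\end{matrix}\right)=\sum_{k=0}^{n}(-1)^{k}\left(\begin{matrix} 2\sigma\\\sigma+k\end{matrix}\right)-\left(\begin{matrix} 2\sigma\\\sigma\end{matrix}\right).
\end{equation*}
The first term on the right is exactly the sum treated in Lemma \ref{sum1} with $n$ replaced by $n+1$, and the second term was computed in the base case of that induction to be $\dfrac{2\Gamma(2\sigma)}{\Gamma(\sigma)\Gamma(\sigma+1)}$.

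Applying Lemma \ref{sum1} with $n+1$ in place of $n$ yields
\begin{equation*}
\sum_{k=0}^{n}(-1)^{k}\left(\begin{matrix} 2\sigma\\\sigma+k\end{matrix}\right)=\frac{\Gamma(2\sigma)}{\Gamma(\sigma)\Gamma(\sigma+1)}+(-1)^{n}\frac{\Gamma(2\sigma)}{\Gamma(\sigma+n+1)\Gamma(\sigma-n)}.
\end{equation*}
Subtracting $\dfrac{2\Gamma(2\sigma)}{\Gamma(\sigma)\Gamma(\sigma+1)}$ gives precisely
\begin{equation*}
\sum_{m=0}^{n-1}(-1)^{m+1}\left(\begin{matrix} 2\sigma\\\sigma+m+1\end{matrix}\right)=-\frac{\Gamma(2\sigma)}{\Gamma(\sigma)\Gamma(\sigma+1)}+(-1)^{n}\frac{\Gamma(2\sigma)}{\Gamma(\sigma-n)\Gamma(\sigma+n+1)},
\end{equation*}
which is the claimed identity.

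Alternatively, one could run a direct induction on $n$ mirroring the proof of Lemma \ref{sum1}; the inductive step again hinges on the Pochhammer-type manipulation $\Gamma(2\sigma+1)=2\sigma\,\Gamma(2\sigma)$ together with $\Gamma(x+1)=x\,\Gamma(x)$ to recombine the two Gamma quotients. There is no real obstacle here: the only mild subtlety is bookkeeping the sign $(-1)^{n}$ versus $(-1)^{n-1}$ correctly through the shift, and making sure the edge term $\binom{2\sigma}{\sigma}$ is accounted for exactly once. I would present the index-shift argument as the main proof since it is shortest and reuses Lemma \ref{sum1} verbatim.
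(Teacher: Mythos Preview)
Your proof is correct. The index shift $k=m+1$ cleanly reduces the sum to the one already handled by Lemma~\ref{sum1} (with $n$ replaced by $n+1$), and subtracting the $k=0$ term $\binom{2\sigma}{\sigma}=\dfrac{2\Gamma(2\sigma)}{\Gamma(\sigma)\Gamma(\sigma+1)}$ yields the desired formula immediately.

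This differs from the paper's proof, which simply instructs the reader to ``repeat the computation for Lemma~\ref{sum1}'', i.e., to run an independent induction on $n$ mirroring that argument. Your reduction is shorter and avoids duplicating the inductive work, at the cost of invoking Lemma~\ref{sum1} at $n+1$ rather than proving the identity from scratch. Either route is perfectly valid; yours has the advantage of making explicit that Lemma~\ref{sum2} is not an independent fact but a direct corollary of Lemma~\ref{sum1}.
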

\begin{proof}
    Repeat the computation for Lemma \ref{sum1}.
\end{proof}
We also have the following.

\begin{lemma}\label{potential-sum}
   Let $\sigma\in\mathbb{R}$ and $n\in\mathbb{N}$. Then:
\begin{equation}
\label{eq:A3}
     \sum_{m=-n+1}^{n} (-1)^{m} \left(\begin{matrix}
 2\sigma\\
\sigma+m
\end{matrix} 	
 \right)= -2 (-1)^n n \frac{\Gamma(2\sigma)}{\Gamma(1+\sigma-n)\Gamma(1+\sigma+n)}.
\end{equation}
\end{lemma}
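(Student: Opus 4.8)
The plan is to combine the two one-sided summation identities just established, namely Lemma~\ref{sum1} and Lemma~\ref{sum2}, after re-indexing the symmetric sum in \eqref{eq:A3} into two pieces. First I would split
\[
\sum_{m=-n+1}^{n} (-1)^{m} \left(\begin{matrix} 2\sigma\\ \sigma+m\end{matrix} \right)
= \sum_{m=0}^{n} (-1)^{m} \left(\begin{matrix} 2\sigma\\ \sigma+m\end{matrix} \right)
+ \sum_{m=-n+1}^{-1} (-1)^{m} \left(\begin{matrix} 2\sigma\\ \sigma+m\end{matrix} \right),
\]
and in the second sum substitute $m \mapsto -m-1$ (so $m$ runs from $0$ to $n-2$) to rewrite it as $\sum_{m=0}^{n-2}(-1)^{m+1}\left(\begin{smallmatrix} 2\sigma\\ \sigma-m-1\end{smallmatrix}\right)$. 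Using the reflection symmetry $\left(\begin{smallmatrix} 2\sigma\\ \sigma-m-1\end{smallmatrix}\right)=\left(\begin{smallmatrix} 2\sigma\\ \sigma+m+1\end{smallmatrix}\right)$ (immediate from the definition of the binomial coefficient via Gamma functions), this becomes exactly the left-hand side of Lemma~\ref{sum2} with $n$ replaced by $n-1$. The first sum is the left-hand side of Lemma~\ref{sum1} with an extra top term $m=n$; splitting that off, it equals $\frac{\Gamma(2\sigma)}{\Gamma(\sigma)\Gamma(\sigma+1)}+(-1)^{n-1}\frac{\Gamma(2\sigma)}{\Gamma(\sigma+n)\Gamma(\sigma-n+1)}+(-1)^n\left(\begin{smallmatrix}2\sigma\\\sigma+n\end{smallmatrix}\right)$.

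Next I would substitute the closed forms from the two lemmas and collect terms. The two $\frac{\Gamma(2\sigma)}{\Gamma(\sigma)\Gamma(\sigma+1)}$ contributions cancel (one appears with $+$, the other with $-$). What remains is a sum of three explicit Gamma-function terms with signs $(-1)^{n-1}$, $(-1)^n$, and $(-1)^{n-1}$ (from Lemma~\ref{sum2}, whose $n$ is shifted to $n-1$, the sign $(-1)^{n-1}$ appears, etc.). Converting $\left(\begin{smallmatrix}2\sigma\\\sigma+n\end{smallmatrix}\right)=\frac{\Gamma(2\sigma+1)}{\Gamma(\sigma+n+1)\Gamma(\sigma-n+1)}$ and using the functional equation $\Gamma(x+1)=x\Gamma(x)$ to bring every term over the common denominator $\Gamma(\sigma+n+1)\Gamma(\sigma-n+1)$ (equivalently $\Gamma(1+\sigma+n)\Gamma(1+\sigma-n)$), all three terms acquire the form $(-1)^{n}\frac{\Gamma(2\sigma)\,(\,\cdot\,)}{\Gamma(1+\sigma+n)\Gamma(1+\sigma-n)}$ with polynomial numerators in $\sigma$ and $n$; adding the numerators should collapse to $-2n$, yielding the claimed right-hand side.

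The main obstacle — though really only a bookkeeping one — is the careful index shift in the reflected half-sum: one must track the range endpoints precisely so that the boundary term $m=n$ in the forward sum and the fact that Lemma~\ref{sum2} is applied at $n-1$ (not $n$) are handled without an off-by-one error, and then verify that the leftover finite collection of Gamma terms genuinely telescopes to $-2n$ rather than leaving a spurious residue. An alternative, possibly cleaner, route would be a direct induction on $n$ mirroring the proof of Lemma~\ref{sum1}: check $n=1$ (the sum is $-\left(\begin{smallmatrix}2\sigma\\\sigma-1\end{smallmatrix}\right)+\left(\begin{smallmatrix}2\sigma\\\sigma+1\end{smallmatrix}\right)=0$? — no, one must recompute, since for $n=1$ the range is $m=0,1$, giving $\left(\begin{smallmatrix}2\sigma\\\sigma\end{smallmatrix}\right)-\left(\begin{smallmatrix}2\sigma\\\sigma+1\end{smallmatrix}\right)$, which one checks equals $-2\frac{\Gamma(2\sigma)}{\Gamma(\sigma)\Gamma(\sigma+2)}$), and then pass from $n-1$ to $n$ by adding the two new terms $(-1)^{-n+1}\left(\begin{smallmatrix}2\sigma\\\sigma-n+1\end{smallmatrix}\right)$ and $(-1)^{n}\left(\begin{smallmatrix}2\sigma\\\sigma+n\end{smallmatrix}\right)$ and simplifying with the Gamma functional equation. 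Either way the computation is elementary; I would present the first approach as the main line and remark that induction gives the same result.
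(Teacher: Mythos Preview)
Your proposal is correct and follows essentially the same approach as the paper: split the symmetric sum into two one-sided pieces, apply Lemmas~\ref{sum1} and~\ref{sum2}, cancel the constant $\frac{\Gamma(2\sigma)}{\Gamma(\sigma)\Gamma(\sigma+1)}$ contributions, and combine the remaining Gamma terms over a common denominator. The only difference is the split point: the paper separates $m\in\{-n+1,\dots,0\}$ from $m\in\{1,\dots,n\}$ (after re-indexing, both pieces have exactly $n$ terms, so Lemmas~\ref{sum1} and~\ref{sum2} apply directly with the same parameter $n$ and only \emph{two} Gamma residues remain), whereas you separate $m\in\{-n+1,\dots,-1\}$ from $m\in\{0,\dots,n\}$, forcing Lemma~\ref{sum2} at $n-1$ plus an extra boundary term and leaving three residues to combine. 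The paper's symmetric split avoids precisely the off-by-one bookkeeping you flag as the main obstacle, but your computation goes through without issue.
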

\begin{proof} 
We expand the left hand side of \eqref{eq:A3} to obtain
    \begin{align*}
    &\sum_{m=-n+1}^{n} (-1)^{m} \left(\begin{matrix}
 2\sigma\\
\sigma+m
\end{matrix} 	
 \right) =\sum_{m=0}^{n-1} (-1)^{m+1} \left(\begin{matrix}
 2\sigma\\
\sigma+m+1
\end{matrix} 	
 \right)+\sum_{m=0}^{n-1} (-1)^{m} \left(\begin{matrix}
 2\sigma\\
\sigma+m
\end{matrix} 	
 \right)\\
 &\quad= (-1)^{n}\left(\frac{\Gamma(2\sigma)}{\Gamma(\sigma-n)\Gamma(\sigma+n+1)}-\frac{\Gamma(2\sigma)}{\Gamma(\sigma+n)\Gamma(\sigma-n+1)}\right)\\
 &\quad= (-1)^{n}\Gamma(2\sigma)\frac{\Gamma(\sigma+n)\Gamma(\sigma-n+1)-\Gamma(\sigma-n)\Gamma(\sigma+n+1)}{\Gamma(\sigma-n)\Gamma(\sigma+n+1)\Gamma(\sigma+n)\Gamma(\sigma-n+1)}\\
 &\quad=(-1)^{n}\Gamma(2\sigma)\frac{(\sigma-n)-(\sigma+n)}{\Gamma(\sigma+n+1)\Gamma(\sigma-n+1)}=-2(-1)^{n}n\frac{\Gamma(2\sigma)}{\Gamma(\sigma+n+1)\Gamma(\sigma-n+1)}
\end{align*}
where we used Lemmas \ref{sum1} and \ref{sum2} in the second equality.
\end{proof}

\begin{lemma}\label{simpriesz}
     Let $\alpha\in\mathbb{R}$ and $n\in\mathbb{N}$. Then:
     \begin{equation*}
          \left[\frac{\Gamma(\alpha-n-1)}{\Gamma(-n-\alpha)}-\frac{\Gamma(\alpha+n-1)}{\Gamma(n-\alpha)}\right]=\frac{\Gamma(n+\alpha-1)}{\Gamma(n-\alpha+2)}2n(2\alpha-1).
     \end{equation*}
\end{lemma}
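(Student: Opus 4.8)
The plan is to rewrite the first ratio so that both gamma factors carry positive arguments, and then to pull a common factor out of the two ratios, reducing the identity to an elementary polynomial computation. Since for $n\in\mathbb{N}$ the arguments $\alpha-n-1$ and $-n-\alpha$ are (eventually) negative, the natural tool is Euler's reflection formula $\Gamma(z)\Gamma(1-z)=\pi/\sin(\pi z)$, which trades $\Gamma$ at a negative argument for $\Gamma$ at the complementary positive argument at the cost of a sine factor. It suffices to prove the identity for $\alpha$ not a (half-)integer, so that no pole of the gamma functions is met; the general case then follows because both sides are meromorphic in $\alpha$ and agree on a nonempty open set.

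First I would apply the reflection formula to the numerator and the denominator of the first term to obtain
\begin{equation*}
\frac{\Gamma(\alpha-n-1)}{\Gamma(-n-\alpha)}
=\frac{\sin\!\big(\pi(-n-\alpha)\big)}{\sin\!\big(\pi(\alpha-n-1)\big)}\,\frac{\Gamma(n+\alpha+1)}{\Gamma(n-\alpha+2)}\,.
\end{equation*}
Using $\sin\!\big(\pi(m+\alpha)\big)=(-1)^m\sin(\pi\alpha)$ for $m\in\mathbb{Z}$ one computes $\sin(\pi(-n-\alpha))=-(-1)^n\sin(\pi\alpha)$ and $\sin(\pi(\alpha-n-1))=(-1)^{n+1}\sin(\pi\alpha)$, so the sine quotient equals $1$ and the first ratio collapses to $\Gamma(n+\alpha+1)/\Gamma(n-\alpha+2)$.

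It then remains to simplify
\begin{equation*}
\frac{\Gamma(n+\alpha+1)}{\Gamma(n-\alpha+2)}-\frac{\Gamma(n+\alpha-1)}{\Gamma(n-\alpha)}\,.
\end{equation*}
Applying $\Gamma(z+1)=z\Gamma(z)$ twice, I would write $\Gamma(n+\alpha+1)=(n+\alpha)(n+\alpha-1)\Gamma(n+\alpha-1)$ and $\Gamma(n-\alpha+2)=(n-\alpha+1)(n-\alpha)\Gamma(n-\alpha)$, which exhibits $\Gamma(n+\alpha-1)/\Gamma(n-\alpha+2)$ as a common factor of the two terms. The difference then equals
\begin{equation*}
\frac{\Gamma(n+\alpha-1)}{\Gamma(n-\alpha+2)}\Big[(n+\alpha)(n+\alpha-1)-(n-\alpha+1)(n-\alpha)\Big],
\end{equation*}
and expanding the bracket gives $4n\alpha-2n=2n(2\alpha-1)$, which is exactly the claimed right-hand side.

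The only slightly delicate point is the sign bookkeeping in the reflection step — keeping track of the parity factors $(-1)^n$ and $(-1)^{n+1}$ produced by the two sines so that they cancel — together with the preliminary remark that, the identity being one between meromorphic functions of $\alpha$, it is enough to verify it away from the poles of the gamma factors. Everything else is the routine polynomial expansion above.
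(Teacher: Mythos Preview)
Your proof is correct and follows essentially the same route as the paper: apply the reflection formula to rewrite $\Gamma(\alpha-n-1)/\Gamma(-n-\alpha)$ as $\Gamma(n+\alpha+1)/\Gamma(n-\alpha+2)$ (the sine factors cancel), then use $\Gamma(z+1)=z\Gamma(z)$ twice to pull out the common factor $\Gamma(n+\alpha-1)/\Gamma(n-\alpha+2)$ and reduce to the polynomial identity $(n+\alpha)(n+\alpha-1)-(n-\alpha+1)(n-\alpha)=2n(2\alpha-1)$. Your explicit remark that it suffices to verify the identity away from the gamma poles and then extend by meromorphy is a nice touch that the paper leaves implicit.
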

\begin{proof}
    We start with the first term
    \begin{equation*}
        \frac{\Gamma(\alpha-n-1)}{\Gamma(-n-\alpha)}=\frac{\sin(\pi(\alpha+n+1))\Gamma(\alpha+n+1)}{\sin(\pi(2+n-\alpha))\Gamma(2+n-\alpha)}=\frac{(-1)^{n+1}}{(-1)^{-n-1}}\frac{\sin\pi\alpha}{\sin\pi\alpha}\frac{\Gamma(\alpha+n+1)}{\Gamma(2+n-\alpha)}.
    \end{equation*}
    Now we put everything together
    \begin{align*}
         &\left[\frac{\Gamma(\alpha-n-1)}{\Gamma(-n-\alpha)}-\frac{\Gamma(\alpha+n-1)}{\Gamma(n-\alpha)}\right]= \left[\frac{\Gamma(\alpha+n+1)}{\Gamma(2+n-\alpha)}-\frac{\Gamma(\alpha+n-1)}{\Gamma(n-\alpha)}\right]\\
         &=\frac{\Gamma(\alpha+n+1)\Gamma(n-\alpha)-\Gamma(\alpha+n-1)\Gamma(2+n-\alpha)}{\Gamma(2+n-\alpha)\Gamma(n-\alpha)}\\
         &=\frac{(n+\alpha)(n+\alpha-1)\Gamma(\alpha+n-1)\Gamma(n-\alpha)-(n-\alpha+1)(n-\alpha)\Gamma(\alpha+n-1)\Gamma(n-\alpha)}{\Gamma(2+n-\alpha)\Gamma(n-\alpha)}\\
         &=\frac{\Gamma(\alpha+n-1)}{\Gamma(2+n-\alpha)}\left[(n+\alpha)(n+\alpha-1)-(n-\alpha+1)(n-\alpha)\right]=\frac{\Gamma(n+\alpha-1)}{\Gamma(n-\alpha+2)}2n(2\alpha-1),
    \end{align*}
    as desired.
\end{proof}

\section{Proof of Lemma \ref{lemm_ct}}\label{app_ct}

We are going to use Theorem 2 from \cite{QG08}:

\begin{theorem}[{\cite[Theorem 2]{QG08}}]\label{QiGuo}
    Let $a$, $b$, and $c$ be real numbers, let $\rho=\min\{a,b,c\}$, and let $\delta>-\rho$.
Then the inequalities
\begin{equation}\label{ineq1}
  (x+c)^{a-b} < \frac{\Gamma(x+a)}{\Gamma(x+b)}
\end{equation}
for $x\in(-\rho,\infty)$ and
\begin{equation}\label{ineq2}
  \frac{\Gamma(x+a)}{\Gamma(x+b)}
  \le \frac{\Gamma(\delta+a)}{\Gamma(\delta+b)}\left(\frac{x+c}{\delta+c}\right)^{a-b}
\end{equation}
for $x\in[\delta,\infty)$ hold if and only if $(a,b,c)\in D_1(a,b,c)$.
The reversed inequalities hold (in the same ranges) if and only if $(a,b,c)\in D_2(a,b,c)$.
\end{theorem}
Here $D_1(a,b,c)$ and $D_2(a,b,c)$ are defined by
\[
\begin{aligned}
D_1(a,b,c):=\Bigl\{(a,b,c)\in\mathbb{R}^3:\;&(b-a)\bigl(1-a-b+2c\bigr)\ge 0,\\
& (b-a)\bigl(|a-b|-a-b+2c\bigr)\ge 0\Bigr\},\\
D_2(a,b,c):=\Bigl\{(a,b,c)\in\mathbb{R}^3:\;&(b-a)\bigl(1-a-b+2c\bigr)\le 0,\\
& (b-a)\bigl(|a-b|-a-b+2c\bigr)\le 0\Bigr\}.
\end{aligned}
\]

We will deal separately with the two quotients of gamma functions.
\begin{lemma}\label{lemma1}
Let $n\in\mathbb{N}$, $0<\sigma<1$. Then
    \begin{equation}
        \frac{\Gamma(n-\frac{1+2\sigma}{4})}{\Gamma(n+\frac{-1+2\sigma}{4})}n^{\sigma}>1 
    \end{equation}
\end{lemma}
\begin{proof}
    We are going to apply (\ref{ineq1}) using $a=-\frac{1}{4}-\frac{\sigma}{2}$, $b=-\frac{1}{4}+\frac{\sigma}{2}$, $c=0$.
    Then 
    $$(b-a)\bigl(1-a-b+2c\bigr)=\frac{3\sigma}{2}>0$$
    and
    $$(b-a)\bigl(|a-b|-a-b+2c\bigr)=\sigma\left(\frac{3}{2}+\sigma\right)> 0\ .$$
    Therefore, $(a,b,c)\in D_1(a,b,c)$ and Theorem \ref{QiGuo} yields the result.
\end{proof}

In order to deal with the second quotient, we need to be more careful, since if we follow the same strategy we find that $(a,b,c)\notin D_1(a,b,c)$. 

\begin{lemma}\label{lemma2}
    Let $n\in\mathbb{N}$, $0<\sigma<1$. Then 
    \begin{equation}
        \frac{\Gamma(n+\frac{5-2\sigma}{4})}{\Gamma(n+\frac{5+2\sigma}{4})}\geq \, n^{-\sigma}.
    \end{equation}
    
\end{lemma}
\begin{proof}
    In this case, we want to use (\ref{ineq2}). More specifically, the reciprocal $$\frac{\Gamma(x+b)}{\Gamma(x+a)}
  \ge \frac{\Gamma(\delta+b)}{\Gamma(\delta+a)}\left(\frac{x+c}{\delta+c}\right)^{b-a}\ .$$
  Let $a=\frac{5}{4}+\frac{\sigma}{2}$, $b=\frac{5}{4}-\frac{\sigma}{2}$, $c=0$. Then 
    $$(b-a)\bigl(1-a-b+2c\bigr)=\frac{3\sigma}{2}>0$$
    and
    $$(b-a)\bigl(|a-b|-a-b+2c\bigr)=\sigma\left(\frac{3}{2}-\sigma\right)> 0\ .$$
    Therefore $(a,b,c)\in D_1(a,b,c)$ and we can use Theorem \ref{QiGuo} to conclude

    \begin{equation*}
        \frac{\Gamma(n+\frac{5-2\sigma}{4})}{\Gamma(n+\frac{5+2\sigma}{4})}\geq \frac{\Gamma(\delta+\frac{5-2\sigma}{4})}{\Gamma(\delta+\frac{5+2\sigma}{4})}\delta^\sigma n^{-\sigma}=C_{\delta,\sigma}n^{-\sigma}
    \end{equation*}
    for $\delta>-\rho=0$. Since $(\frac{5-2\sigma}{4},\frac{5+2\sigma}{4},0)\in D_2(\frac{5-2\sigma}{4},\frac{5+2\sigma}{4},0)$, we use Theorem \ref{QiGuo} again to conclude
    \begin{equation*}
        \frac{\Gamma(\delta+\frac{5-2\sigma}{4})}{\Gamma(\delta+\frac{5+2\sigma}{4})}<\delta^{-\sigma} \implies C_{\delta,\sigma}<1.
    \end{equation*}
    However,  we can choose $\delta$ as large as we want, as Theorem \ref{QiGuo} holds for $\delta\in\left[-\rho,\infty\right)$. Since we also know that 
    \begin{equation*}
        \lim_{\delta\rightarrow\infty}\frac{\Gamma(\delta+\frac{5-2\sigma}{4})}{\Gamma(\delta+\frac{5+2\sigma}{4})}\delta^\sigma=1,
    \end{equation*}
    we can conclude the lemma.
\end{proof}

\begin{proof}[Proof of Lemma \ref{lemm_ct}]
   We now just have to combine lemmas \ref{lemma1} and \ref{lemma2} to get 
    \begin{equation*}
        \frac{\Gamma(n-\frac{1+2\sigma}{4})\Gamma(n+\frac{5-2\sigma}{4})}{\Gamma(n+\frac{-1+2\sigma}{4})\Gamma(n+\frac{5+2\sigma}{4})}n^{2\sigma}>n^{-\sigma}n^{-\sigma}n^{2\sigma}=1
    \end{equation*}
\end{proof}

\addcontentsline{toc}{section}{Acknowledgements}

\section*{Acknowledgements}
The authors sincerely thank Luz Roncal for bringing the problem to our attention. We are thankful to her for several discussions and for her important suggestions to improve the article. 
 We also wish to thank Matthias Keller, David Krej\v{c}i\v{r}ík for some discussions, and for their comments and remarks. We thank Yehuda Pinchover for a careful reading and for his suggestions. We also thank the anonymous referees for their thorough review and valuable comments, which helped us to improve the manuscript.

\section*{Data availability statement}
 The authors declare that the data supporting the findings of this study are available within the paper.

 \section*{Conflict of interest statement}
  All authors declare no conflicts of interest.

\addcontentsline{toc}{section}{References}

\bibliographystyle{plain} 
{
\bibliography{sample}}

\begin{thebibliography}{10}

\bibitem{AS}
Milton Abramowitz and Irene~A. Stegun.
\newblock {\em Handbook of mathematical functions with formulas, graphs, and
  mathematical tables}.
\newblock National Bureau of Standards Applied Mathematics Series, No. 55. U.
  S. Government Printing Office, Washington, DC, 1964.

\bibitem{AJR}
Adimurthi, Purbita Jana, and Prosenjit Roy.
\newblock Boundary fractional {H}ardy’s inequality in dimension one: The
  critical case.
\newblock {\em Commun. Contemp. Math.}, 2025.

\bibitem{ARS}
Adimurthi, Prosenjit Roy, and Vivek Sahu.
\newblock Fractional boundary {H}ardy inequality for the critical cases,
  arXiv:2308.11956, 2024.

\bibitem{Agmon}
Shmuel Agmon.
\newblock {\em Lectures on exponential decay of solutions of second-order
  elliptic equations: bounds on eigenfunctions of {$N$}-body {S}chr\"{o}dinger
  operators}, volume~29 of {\em Mathematical Notes}.
\newblock Princeton University Press, Princeton, NJ; University of Tokyo Press,
  Tokyo, 1982.

\bibitem{AR}
Richard~A. Askey and Ranjan Roy.
\newblock Gamma function.
\newblock In {\em N{IST} handbook of mathematical functions}, pages 135--147.
  U.S. Dept. Commerce, Washington, DC, 2010.

\bibitem{Bal}
Alexander~A. Balinsky, W.~Desmond Evans, and Roger~T. Lewis.
\newblock {\em The analysis and geometry of {H}ardy's inequality}.
\newblock Universitext. Springer, Cham, 2015.

\bibitem{BGG}
Elvise Berchio, Debdip Ganguly, and Gabriele Grillo.
\newblock Sharp {P}oincar\'e-{H}ardy and {P}oincar\'e-{R}ellich inequalities on
  the hyperbolic space.
\newblock {\em J. Funct. Anal.}, 272(4):1661--1703, 2017.

\bibitem{Birman}
Mikhail~Shlëmovich Birman.
\newblock On the spectrum of singular boundary-value problems.
\newblock {\em Mat. Sb. (N.S.)}, 55(97):125--174, 1961.

\bibitem{BD}
Krzysztof Bogdan and Bart{\l o}miej Dyda.
\newblock The best constant in a fractional {H}ardy inequality.
\newblock {\em Math. Nachr.}, 284(5-6):629--638, 2011.

\bibitem{CGL}
Jun Cao, Alexander Grigor'yan, and Liguang Liu.
\newblock Hardy's inequality and {G}reen function on metric measure spaces.
\newblock {\em J. Funct. Anal.}, 281(3):Paper No. 109020, 78, 2021.

\bibitem{CR18}
\'{O}scar Ciaurri and Luz Roncal.
\newblock Hardy's inequality for the fractional powers of a discrete
  {L}aplacian.
\newblock {\em J. Anal.}, 26(2):211--225, 2018.

\bibitem{DKP24}
Ujjal Das, Matthias Keller, and Yehuda Pinchover.
\newblock On {L}andis conjecture for positive {S}chr\"odinger operators on
  graphs.
\newblock {\em Int. Math. Res. Not. IMRN}, 12:1--20, 2025.

\bibitem{DKPM}
Ujjal Das, Matthias Keller, and Yehuda Pinchover.
\newblock The space of {H}ardy-weights for quasilinear operators on discrete
  graphs, arXiv:2407.02116, 2024.

\bibitem{DP24}
Ujjal Das and Yehuda Pinchover.
\newblock The {L}andis conjecture via {L}iouville comparison principle and
  criticality theory, arXiv:2405.11695, 2024.

\bibitem{DDP}
Ujjal Das, Yehuda Pinchover, and Baptiste Devyver.
\newblock On existence of minimizers for weighted ${L}^p$-{H}ardy inequalities
  on ${C}^{1,\gamma}$-domains with compact boundary, \textit{J. Spectr.
  Theory}, 15(3):1089-1138, 2025.

\bibitem{DFP}
Baptiste Devyver, Martin Fraas, and Yehuda Pinchover.
\newblock Optimal {H}ardy weight for second-order elliptic operator: an answer
  to a problem of {A}gmon.
\newblock {\em J. Funct. Anal.}, 266(7):4422--4489, 2014.

\bibitem{DP}
Baptiste Devyver and Yehuda Pinchover.
\newblock Optimal {$L^p$} {H}ardy-type inequalities.
\newblock {\em Ann. Inst. H. Poincar\'e{} C Anal. Non Lin\'eaire},
  33(1):93--118, 2016.

\bibitem{Dy}
Bartłomiej Dyda.
\newblock Fractional and non-fractional {H}ardy inequality
  on~a~lattice~$\mathbb{Z}^d$, arXiv:2506.08273, 2025.

\bibitem{FRR24}
Aingeru Fern\'{a}ndez-Bertolin, Luz Roncal, and Angkana R\"{u}land.
\newblock On (global) unique continuation properties of the fractional discrete
  {L}aplacian.
\newblock {\em J. Funct. Anal.}, 286(9):Paper No. 110375, 64, 2024.

\bibitem{Survey}
Aingeru Fernández-Bertolin, Luz Roncal, and Diana Stan.
\newblock Landis' conjecture: a survey, arXiv:2412.00788, to appear in Harmonic
  Analysis and Nonlinear Partial Differential Equations, RIMS K\^oky\^{u}roku
  Bessatsu, 2025.

\bibitem{Fischer}
Florian Fischer.
\newblock On the optimality and decay of {$p$}-{H}ardy weights on graphs.
\newblock {\em Calc. Var. Partial Differential Equations}, 63(7):Paper No. 162,
  37, 2024.

\bibitem{Florian}
Florian Fischer.
\newblock Quasi-linear criticality theory and {G}reen's functions on graphs,
  arXiv:2207.05445, 2022.

\bibitem{FKP}
Florian Fischer, Matthias Keller, and Felix Pogorzelski.
\newblock An improved discrete {$p$}-{H}ardy inequality.
\newblock {\em Integral Equations Operator Theory}, 95(4):Paper No. 24, 17,
  2023.

\bibitem{FP}
Florian Fischer and Norbert Peyerimhoff.
\newblock Sharp {H}ardy-type inequalities for non-compact harmonic manifolds
  and {D}amek–{R}icci spaces.
\newblock {\em Israel J. Math.}, 267:463--500, 2025.

\bibitem{Fis_Rose}
Florian Fischer and Christian Rose.
\newblock Optimal {P}oincar\'e-{H}ardy-type inequalities on manifolds and
  graphs.
\newblock {\em Indagationes Mathematicae}, 2025.

\bibitem{Fra17}
Rupert~L. Frank.
\newblock Eigenvalue bounds for the fractional {L}aplacian: a review.
\newblock In {\em Recent developments in nonlocal theory}, pages 210--235. De
  Gruyter, Berlin, 2018.

\bibitem{FLS06}
Rupert~L. Frank, Elliott~H. Lieb, and Robert Seiringer.
\newblock Hardy-{L}ieb-{T}hirring inequalities for fractional {S}chr\"{o}dinger
  operators.
\newblock {\em J. Amer. Math. Soc.}, 21(4):925--950, 2008.

\bibitem{GKS25}
Borbala Gerhat, David Krej\v{c}i\v{r}\'{\i}k, and Franti\v{s}ek \v{S}tampach.
\newblock Criticality transition for positive powers of the discrete
  {L}aplacian on the half line.
\newblock {\em Rev. Mat. Iberoam.}, 41(3):1173--1200, 2025.

\bibitem{GKS_Imp}
Borbala Gerhat, David Krej\v{c}i\v{r}ík, and Franti\v{s}ek \v{S}tampach.
\newblock An improved discrete {R}ellich inequality on the half-line.
\newblock {\em Israel J. Math.}, 268:45--72, 2025.

\bibitem{G_CVPDE}
Shubham Gupta.
\newblock Hardy and {R}ellich inequality on lattices.
\newblock {\em Calc. Var. Partial Differential Equations}, 62(3):Paper No. 81,
  18, 2023.

\bibitem{G_JF}
Shubham Gupta.
\newblock One-dimensional discrete {H}ardy and {R}ellich inequalities on
  integers.
\newblock {\em J. Fourier Anal. Appl.}, 30(2):Paper No. 15, 22, 2024.

\bibitem{Har20}
Godfrey~H. Hardy.
\newblock Note on a theorem of {H}ilbert.
\newblock {\em Math. Z.}, 6:314--317, 1920.

\bibitem{Herbst}
Ira~W. Herbst.
\newblock Spectral theory of the operator
  {$(p\sp{2}+m\sp{2})\sp{1/2}-Ze\sp{2}/r$}.
\newblock {\em Comm. Math. Phys.}, 53(3):285--294, 1977.

\bibitem{KLW21}
Matthias Keller, Daniel Lenz, and Radoslaw~K. Wojciechowski.
\newblock {\em Graphs and discrete {D}irichlet spaces}.
\newblock Springer, 2021.

\bibitem{KN23}
Matthias Keller and Marius Nietschmann.
\newblock Optimal {H}ardy inequality for fractional {L}aplacians on the
  integers.
\newblock {\em Ann. Henri Poincar\'{e}}, 24(8):2729--2741, 2023.

\bibitem{KPPAMS}
Matthias Keller, Yehuda Pinchover, and Felix Pogorzelski.
\newblock An improved discrete {H}ardy inequality.
\newblock {\em Amer. Math. Monthly}, 125(4):347--350, 2018.

\bibitem{KPP18}
Matthias Keller, Yehuda Pinchover, and Felix Pogorzelski.
\newblock Optimal {H}ardy inequalities for {S}chr\"{o}dinger operators on
  graphs.
\newblock {\em Comm. Math. Phys.}, 358(2):767--790, 2018.

\bibitem{KPP17}
Matthias Keller, Yehuda Pinchover, and Felix Pogorzelski.
\newblock Criticality theory for {S}chr\"{o}dinger operators on graphs.
\newblock {\em J. Spectr. Theory}, 10(1):73--114, 2020.

\bibitem{KPP_PLMS}
Matthias Keller, Yehuda Pinchover, and Felix Pogorzelski.
\newblock From {H}ardy to {R}ellich inequalities on graphs.
\newblock {\em Proc. Lond. Math. Soc. (3)}, 122(3):458--477, 2021.

\bibitem{Landis}
Vladimir~A. Kondrat'ev and Evgenii~M. Landis.
\newblock Qualitative theory of second-order linear partial differential
  equations.
\newblock In {\em Partial differential equations, 3 ({R}ussian)}, Itogi Nauki i
  Tekhniki, pages 99--215, 220. Akad. Nauk SSSR, Vsesoyuz. Inst. Nauchn. i
  Tekhn. Inform., Moscow, 1988.

\bibitem{K}
David Krej{\v c}i{\v r}ik.
\newblock Geometrical aspects of spectral theory ({L}ecture notes).
\newblock {\em https://nsa.fjfi.cvut.cz/david/other/gspec.pdf}, 2023.

\bibitem{DAFR}
David Krej{\v c}i{\v r}ik and Franti{\v s}ek {\v S}tampach.
\newblock A sharp form of the discrete {H}ardy inequality and the
  {K}eller-{P}inchover-{P}ogorzelski inequality.
\newblock {\em Amer. Math. Monthly}, 129(3):281--283, 2022.

\bibitem{KLS}
David Krej\v{c}i\v{r}\'{i}k, Ari Laptev, and Franti\v{s}ek \v{S}tampach.
\newblock Spectral enclosures and stability for non-self-adjoint discrete
  {S}chr\"odinger operators on the half-line.
\newblock {\em Bull. Lond. Math. Soc.}, 54(6):2379--2403, 2022.

\bibitem{KMP06}
Alois Kufner, Lech Maligranda, and Lars-Erik Persson.
\newblock The prehistory of the {H}ardy inequality.
\newblock {\em Amer. Math. Monthly}, 113(8):715--732, 2006.

\bibitem{LR18}
Carlos Lizama and Luz Roncal.
\newblock H\"{o}lder-{L}ebesgue regularity and almost periodicity for
  semidiscrete equations with a fractional {L}aplacian.
\newblock {\em Discrete Contin. Dyn. Syst.}, 38(3):1365--1403, 2018.

\bibitem{MP10}
Peter M{\"o}rters and Yuval Peres.
\newblock {\em Brownian Motion}.
\newblock Cambridge Series in Statistical and Probabilistic Mathematics.
  Cambridge University Press, 2010.

\bibitem{Pinchover_LC}
Yehuda Pinchover.
\newblock A {L}iouville-type theorem for {S}chr\"odinger operators.
\newblock {\em Comm. Math. Phys.}, 272(1):75--84, 2007.

\bibitem{GY}
Yehuda Pinchover and Georgios Psaradakis.
\newblock On positive solutions of the {$(p,A)$}-{L}aplacian with potential in
  {M}orrey space.
\newblock {\em Anal. PDE}, 9(6):1317--1358, 2016.

\bibitem{PT_CVPDE}
Yehuda Pinchover and Kyril Tintarev.
\newblock Ground state alternative for {$p$}-{L}aplacian with potential term.
\newblock {\em Calc. Var. Partial Differential Equations}, 28(2):179--201,
  2007.

\bibitem{QG08}
Feng Qi and Bai-Ni Guo.
\newblock Wendel's and {G}autschi's inequalities: refinements, extensions, and
  a class of logarithmically completely monotonic functions.
\newblock {\em Appl. Math. Comput.}, 205(1):281--290, 2008.

\bibitem{RS_Bull}
Prasun Roychowdhury and Durvudkhan Suragan.
\newblock Improvement of the discrete {H}ardy inequality.
\newblock {\em Bull. Sci. Math.}, 195:Paper No. 103468, 12, 2024.

\bibitem{Marcel}
Marcel Schmidt.
\newblock ({W}eak) {H}ardy and {P}oincar\'e{} inequalities and criticality
  theory.
\newblock In {\em Dirichlet forms and related topics}, volume 394 of {\em
  Springer Proc. Math. Stat.}, pages 421--459. Springer, Singapore, 2022.

\bibitem{Sze67}
G\'abor Szeg\H{o}.
\newblock {\em Orthogonal polynomials}, volume Vol. 23 of {\em American
  Mathematical Society Colloquium Publications}.
\newblock American Mathematical Society, Providence, RI, third edition, 1967.

\bibitem{Tak}
Masayoshi Takeda.
\newblock Criticality for {S}chr\"odinger type operators based on recurrent
  symmetric stable processes.
\newblock {\em Trans. Amer. Math. Soc.}, 368(1):149--167, 2016.

\bibitem{Versano}
Idan Versano.
\newblock Optimal {H}ardy-weights for the {$(p,A)$}-{L}aplacian with a
  potential term.
\newblock {\em Proc. Roy. Soc. Edinburgh Sect. A}, 153(1):289--306, 2023.

\end{thebibliography}
\end{document}